\newtheorem{proposition}{Proposition}[section]
\newtheorem{lemma}[proposition]{Lemma}
\newtheorem{corollary}[proposition]{Corollary}
\newtheorem{theorem}[proposition]{Theorem}
\newtheorem{question}[proposition]{Question}
\newtheorem{convention}[proposition]{Convention}
\newtheorem{project}[proposition]{Project}
\newtheorem{remark}[proposition]{Remark}
\newtheorem{observation}[proposition]{Observation}
\theoremstyle{definition}
\newtheorem{definition}[proposition]{Definition}
\def\<{\leqslant}
\def\>{\geqslant}
\def\a{\alpha}
\def\e{\varepsilon}
\def\ti{\times}
\def\ot{\otimes}
\def\ra{\rightarrow}
\def\Hom{\mathrm{Hom}}
\DeclareMathOperator{\fpdim}{fpdim}
\DeclareMathOperator{\End}{End}
\DeclareMathOperator{\Ext}{Ext}
\def\fm{\mathfrak{m}}
\def\ann{\mathrm{ann}}
\def\rad{\mathrm{rad}}
\date{}
\begin{document}
\title[Non-semisimple Hopf algebra actions]
{Examples of non-semisimple Hopf algebra \\
actions on Artin-Schelter regular algebras}

\author{Hui-Xiang Chen}
\address{School of Mathematical Sciences, Yangzhou University,
Yangzhou 225002, China}
\email{hxchen@yzu.edu.cn}

\author{Ding-Guo Wang}
\address{School of Mathematical Sciences, Qufu Normal University,
Qufu, Shandong 273165, China}
\email{dgwang@qfnu.edu.cn}

\author{James J. Zhang}
\address{Department of Mathematics, University of Washington,
Box 354350, Seattle, WA 98195, USA}
\email{zhang@math.washington.edu}

\keywords{Hopf algebra action, Artin-Schelter regular algebra,
indecomposable module, Green ring}

\makeatletter
\@namedef{subjclassname@2020}{%
  \textup{2020} Mathematics Subject Classification}
\makeatother
\subjclass[2020]{16T05, 16W22}

\begin{abstract}
Let $\Bbbk$ be a base field of characteristic $p>0$ and let
$U$ be the restricted enveloping algebra of a 2-dimensional
nonabelian restricted Lie algebra. We classify all inner-faithful
$U$-actions on noetherian Koszul Artin-Schelter regular algebras
of global dimension up to three.
\end{abstract}
\maketitle

\setcounter{section}{-1}
\section{Introduction}
\label{xxsec0}

Invariant theory of commutative polynomial rings under finite group
actions is closely connected to commutative algebra and algebraic
geometry. Artin-Schelter regular algebras \cite{AS}, viewed as a
natural noncommutative generalization of the commutative polynomial
rings, play an important role in noncommutative algebraic geometry,
representation theory, and the study of noncommutative algebras
\cite{ATV1,ATV2, CV}. Hopf actions (including group actions)
on Artin-Schelter regular algebras have been studied extensively by
many authors in recent years, see \cite{CKWZ1, CKWZ2, CKWZ3, CG,
FKMW1, FKMW2, FKMP, KKZ1, KKZ2, KWZ, KZ} and so on. A very nice survey was
given by Kirkman \cite{Ki} a few years ago. In most papers, only
semisimple Hopf algebras are considered due to the fact that
non-semisimple Hopf actions are much more difficult to handle.
A list of significant differences between semisimple and non-semisimple
actions can be found in Observation \ref{xxobs4.1}.

Recall that a Hopf $H$-action on an algebra $A$ is called
{\it inner-faithful} if there is no nonzero Hopf ideal $I\subseteq H$
such that $I A=0$ \cite[Definition 1.5]{CKWZ1}. Our goal is to
construct examples of inner-faithful and homogeneous
$U$-actions on $T$ where $U$ is the non-semisimple Hopf algebra
given in Definition \ref{xxdef0.1} and $T$ is a connected
graded Artin-Schelter regular algebra. The main result consists
of Proposition \ref{xxpro3.2} and Theorem \ref{xxthm0.5} that together
classify all inner-faithful $U$-actions on noetherian Koszul Artin-Schelter
regular algebras of global dimension at most three.

Throughout let $\Bbbk$ be a base field with ${\rm char} \; \Bbbk=p>0$.

\begin{definition}
\label{xxdef0.1}
Let $U$ be the $\Bbbk$-algebra generated by $u$ and $w$ and
subject to the relations
\begin{equation}
\label{E0.1.1}\tag{E0.1.1}
u^p=0, \ w^p=w, \ [w,u](:=wu-uw)=u.
\end{equation}
Then $U$ has a Hopf algebra structure with coalgebra
structure and antipode determined by
$$\begin{array}{lll}
\Delta(u)=u\ot 1+1\ot u, & \e(u)=0, & S(u)=-u,\\
\Delta(w)=w\ot 1+1\ot w, & \e(w)=0, & S(w)=-w.\\
\end{array}$$
Note that $\dim_{\Bbbk} U=p^2$ and $\{u^iw^j|0\leqslant i, j
\leqslant p-1\}$ is a $\Bbbk$-basis of $U$. It is easy to
see that $U$ is isomorphic, as a Hopf algebra, to the
restricted enveloping algebra of the 2-dimensional nonabelian
restricted Lie algebra ${\mathfrak g}:=\Bbbk u\oplus \Bbbk w$
with structure determined by \eqref{E0.1.1}.
\end{definition}

A very first step of understanding $U$-actions on Artin-Schelter
regular algebras is to work out representations of $U$. Similar
to the Taft algebras, there are exactly $p^2$ indecomposable
$U$-modules up to isomorphisms, denoted by
$$\{M(l, i)|1\leqslant l\leqslant p, i\in\mathbb{Z}_p:={\mathbb Z}/(p)\}$$
where $\dim_{\Bbbk} M(l,i)=l$ for all $l,i$, see Convention
\ref{xxcon1.4} and Proposition \ref{xxpro1.7}. We also need
the following tensor decomposition result.

\begin{theorem}
\label{xxthm0.2} Retain the notation as above.
Let $r, r'\in{\mathbb Z}_p$.
\begin{enumerate}
\item[(1)] 
Let $1\leqslant l\<m\< p$ and $l+m\leqslant p$. Then
$$M(l,r)\ot M(m,r')\cong\bigoplus\limits_{i=1}^l
M(m-l-1+2i, r+r'+l-i).$$
\item[(2)] 
Let $1\leqslant l\<m\<p$ and $l+m>p$. Then
$$M(l,r)\ot M(m,r')\cong(\bigoplus\limits_{i=1}^{p-m}
M(m-l-1+2i, r+r'+l-i))
\bigoplus(\bigoplus\limits_{i=1}^{l+m-p}
M(p, r+r'+i-1)).$$
\end{enumerate}
\end{theorem}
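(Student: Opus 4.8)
The plan is to reduce everything to the case $r=r'=0$ and then to a careful analysis of the action of $u$ on the tensor product. First I would recall from Convention 1.4 and Proposition 1.7 how $M(l,i)$ is built: it has a basis $\{v_0,\dots,v_{l-1}\}$ on which $u$ acts as a nilpotent "lowering" operator ($u v_j = v_{j+1}$, $u v_{l-1}=0$) and $w$ acts diagonalizably, with $w v_j = (i+\text{something linear in } j) v_j$ after a suitable normalization; the parameter $i$ is precisely the $w$-eigenvalue class that labels the top of the module. The key structural point is that the $w$-grading makes each $M(l,i)$ a graded object, and that tensoring adds the $w$-labels: this is exactly why $r,r'$ appear only through $r+r'$ in the statement, so it suffices to prove the isomorphism for one choice of $r,r'$ and then twist. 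I would make this "twisting" precise by noting that there is a one-dimensional module $M(1,i)$ for each $i$, that $M(l,r)\cong M(l,0)\ot M(1,r)$, and that $\ot$ is symmetric monoidal, so $M(l,r)\ot M(m,r')\cong (M(l,0)\ot M(m,0))\ot M(1,r+r')$; hence it is enough to decompose $M(l,0)\ot M(m,0)$ and then check that tensoring the claimed right-hand side by $M(1,r+r')$ shifts each label by $r+r'$, which is immediate from $M(n,s)\ot M(1,t)\cong M(n,s+t)$.

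Next, for $M(l,0)\ot M(m,0)$ with $l\le m$, I would analyze the operator $u=u\ot 1+1\ot u$, which is nilpotent on the $lm$-dimensional space $V:=M(l,0)\ot M(m,0)$. The multiplicities and top-labels of the indecomposable summands are determined by the Jordan type of $u$ on $V$ together with the $w$-eigenvalues of the socle (or top) vectors, since each $M(n,s)$ is the unique indecomposable with a single Jordan block of size $n$ for $u$ and top $w$-label $s$. So the proof splits into two computations: (i) determine the partition of $lm$ giving the sizes of the Jordan blocks of $u$ on $V$; (ii) for each block, read off the $w$-eigenvalue at its top. For (i), the restriction of $U$ to the subalgebra $\Bbbk[u]/(u^p)$ is just a truncated polynomial algebra, and the tensor product of a Jordan block of size $l$ with one of size $m$ over such an algebra has a classical decomposition: when $l+m\le p$ it is $\bigoplus_{i=1}^{l}(\text{block of size }m-l-1+2i)$, and when $l+m>p$ the blocks that would exceed size $p$ collapse to $l+m-p$ blocks of size exactly $p$ (the free part), leaving $\bigoplus_{i=1}^{p-m}(\text{block of size }m-l-1+2i)$. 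This is precisely the block-size pattern in both parts of the theorem, so (i) is really a lemma about $\Bbbk[t]/(t^p)$-modules that I would either cite or prove by an explicit basis (e.g. using the standard "staircase" basis $v_a\ot v_b$ and the vectors $\sum \pm v_a\ot v_{c-a}$).

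For (ii), I would exhibit, for each $i$, an explicit highest-$w$-weight vector generating the corresponding block and compute its $w$-eigenvalue. If $\{v_a\}$ and $\{v'_b\}$ are the standard bases with $w$-weights $-a$ and $-b$ (so the tops have weight $0$), then $w$ acts on $v_a\ot v'_b$ with weight $-(a+b)$, and the generator of the $i$-th block sits in weight $-(l-i)$ — this is the source of the "$l-i$" in the label — after accounting for the $w$-label shift; I expect the bookkeeping here to match $r+r'+l-i$ in part (1) and, in part (2), to give $r+r'+i-1$ for the $p$-dimensional (projective) summands because those are generated much lower in the weight filtration. The main obstacle, and the step requiring the most care, is (ii) together with making sure the two computations are compatible: one must check that the Jordan block of size $m-l-1+2i$ from the $\Bbbk[u]$-computation really does carry top $w$-weight corresponding to $l-i$ (not, say, $i$ or $l-1-i$), and in the $l+m>p$ case one must correctly identify which of the collapsed blocks become the size-$p$ summands and verify their labels run $r+r'+i-1$ for $i=1,\dots,l+m-p$. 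I would handle this by a single clean choice of basis for $V$ adapted simultaneously to the $u$-Jordan structure and the $w$-grading — essentially a $\mathfrak{sl}_2$-style lowest/highest weight argument transported to characteristic $p$ — and then both (i) and (ii) fall out of one explicit computation rather than two separate ones.
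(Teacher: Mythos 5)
Your strategy is sound and genuinely different from the paper's. You share with the paper the first reduction: tensoring with the one-dimensional modules $M(1,t)$ reduces everything to $M(l,0)\otimes M(m,0)$ (this is Lemma \ref{xxlem2.2} in the paper). After that the routes diverge. The paper proceeds by induction on $l$: Lemma \ref{xxlem2.9} produces an explicit embedding of $M(l-1,1)\otimes M(m-1,0)$ into $M(l,0)\otimes M(m,0)$, the inductive hypothesis decomposes that submodule, and the missing summands are exhibited as explicit cyclic submodules ($\langle x_1\otimes y_1\rangle\cong M(l+m-1,0)$ in the case $l+m\leqslant p$, and the projectives $\langle x_i\otimes y_1\rangle\cong M(p,i-1)$ in the case $l+m>p$), with directness checked by comparing socles and counting dimensions; the boundary case $m=p$ is handled separately via projectivity ($M\cong P(\mathrm{head}(M))$, Proposition \ref{xxpro2.8}). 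You instead restrict to $\Bbbk[u]/(u^p)$, invoke the classical decomposition of a tensor product of Jordan blocks over a truncated polynomial ring (equivalently over $\Bbbk C_p$) to get the block sizes, and then recover the labels from the $w$-grading. Since every indecomposable $U$-module is uniserial with a single $u$-Jordan block and the pair (size, head-weight) determines it, this does pin down the answer, and your layer-by-layer weight bookkeeping (the weights on $u^kM/u^{k+1}M$) is a valid way to extract the labels. What you buy is brevity and uniformity (the $m=p$ case needs no special treatment) at the cost of importing the $\Bbbk[t]/(t^p)$ Clebsch--Gordan rule as a black box; what the paper buys is a self-contained argument that also hands you the summands as concrete submodules.

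Two caveats on your sketch. First, your parenthetical description of the case $l+m>p$ --- ``the blocks that would exceed size $p$ collapse'' --- is not an accurate picture of the collapse: for example with $p=5$, $l=3$, $m=4$ the characteristic-zero pattern $J_2\oplus J_4\oplus J_6$ becomes $J_2\oplus 2J_5$, so the block $J_4$ of size $<p$ is also absorbed into the free part. You state the correct final formula, so this is harmless if you genuinely cite the classical result rather than rely on the heuristic, but it should not appear as a justification. Second, the weight-matching step you rightly flag as delicate is where the real work lives, and in the case $l+m>p$ the $w$-weights wrap around modulo $p$, so the purely combinatorial matching of the size multiset to the weight multiset can a priori be ambiguous; you need the explicit generators (or the layer-by-layer recursion $m_{k+1,t-k}=\dim(u^kM/u^{k+1}M)[t]-\dim(u^{k+1}M/u^{k+2}M)[t+1]$, which does resolve the ambiguity) actually carried out. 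As written these are plans rather than proofs, but neither step would fail.
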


The proof of Theorem \ref{xxthm0.2} follows from the ideas in \cite{CVZ}.
Note that the Green ring (or the representation ring) of Hopf algebras
has been studied extensively, see \cite{Ch2, CVZ, HY, LH, LZ, SY, WLZ1,
WLZ2} and more. If we can describe the Green ring of a Hopf algebra $H$,
then it is extremely useful for understanding the representations, the
fusion rules, Grothendieck group, Frobenius-Perron dimension
\cite{LSYZ, Xu, ZZ}, and many other invariants and structures of $H$.
However, it is notoriously difficult to understand the Green ring for
a general Hopf algebra (even, of small dimension, see Question \ref{xxque5.1}).
It is not a surprise that most of positive results so far concern Hopf
algebras of finite or tame representation type. Using Theorem
\ref{xxthm0.2} we can present the Green ring of $U$ -- another non-semisimple
Hopf algebra of finite representation type [Corollary \ref{xxcor0.3}].

For each positive integer $n$, define
$$f_{n}(y, z)=\sum\limits_{i=0}^{[(n-1)/2]}
(-1)^i\binom{n-1-i}{i}y^iz^{n-1-2i},$$
where $[(n-1)/2]$ denotes the integer part of $(n-1)/2$.
Recall that $p={\text{char}}\; \Bbbk$. Let $I$ denote the
ideal of ${\mathbb Z}[y,z]$ generated by $y^p-1$ and
$(z-y-1)f_p(y,z)$.

\begin{corollary}[Corollary \ref{xxcor2.15}]
\label{xxcor0.3}
The Green ring of $U$ is isomorphic to the
factor ring ${\mathbb Z}[y,z]/I$.
\end{corollary}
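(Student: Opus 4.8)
Write $R(U)$ for the Green ring. Since $U$ is cocommutative, the flip $M\ot N\to N\ot M$ is a module map, so $R(U)$ is a commutative ring; and by Krull--Schmidt together with the classification of indecomposables, $R(U)$ is a free ${\mathbb Z}$-module with basis $\{[M(l,i)]\mid 1\le l\le p,\ i\in{\mathbb Z}_p\}$, hence free of rank $p^2$. Put $\bar y:=[M(1,1)]$ and $\bar z:=[M(2,0)]$ in $R(U)$, so there is a unique ring homomorphism
$$\phi\colon {\mathbb Z}[y,z]\longrightarrow R(U),\qquad y\mapsto\bar y,\ \ z\mapsto\bar z.$$
The plan is to show $\phi$ is surjective, that $I\subseteq\ker\phi$, and that ${\mathbb Z}[y,z]/I$ is ${\mathbb Z}$-free of rank $p^2$; the isomorphism then follows because a surjection of free ${\mathbb Z}$-modules of the same finite rank is an isomorphism.

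\emph{Expressing the indecomposables.} Applying \thref{xxthm0.2}(1) with first factor $M(1,r)$ when $l\le p-1$, and \thref{xxthm0.2}(2) when the second factor is $M(p,\cdot)$, gives $M(1,r)\ot M(l,s)\cong M(l,r+s)$ for all $r,s$ and $1\le l\le p$. In particular $[M(1,i)]=\bar y^{\,i}$, $[M(l,i)]=\bar y^{\,i}[M(l,0)]$, and $\bar y^{\,p}=[M(1,0)]=1$. Next, decomposing $M(2,0)\ot M(l-1,0)$ — via \thref{xxthm0.2}(1) when $3\le l\le p-1$ and via \thref{xxthm0.2}(2) when $l=p$ — yields in every case $M(2,0)\ot M(l-1,0)\cong M(l-2,1)\oplus M(l,0)$, i.e.
$$\bar z\,[M(l-1,0)]=\bar y\,[M(l-2,0)]+[M(l,0)],\qquad 3\le l\le p.$$
Since $[M(1,0)]=1=\phi(f_1)$, $[M(2,0)]=\bar z=\phi(f_2)$, and the polynomials satisfy $f_{n+1}=zf_n-yf_{n-1}$, induction on $l$ gives $[M(l,0)]=\phi\bigl(f_l(y,z)\bigr)$ for $1\le l\le p$, hence $[M(l,i)]=\phi\bigl(y^{i}f_l(y,z)\bigr)$. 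Thus $\phi$ is surjective and sends the $p^2$ elements $y^{i}f_l(y,z)$ ($1\le l\le p$, $0\le i\le p-1$) onto a ${\mathbb Z}$-basis of $R(U)$.

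\emph{The ideal $I$ lies in the kernel.} We have $\phi(y^p-1)=\bar y^{\,p}-1=[M(1,p)]-[M(1,0)]=0$. For the other generator, $\phi\bigl((z-y-1)f_p(y,z)\bigr)=(\bar z-\bar y-1)\,[M(p,0)]$, and \thref{xxthm0.2}(2) gives $M(2,0)\ot M(p,0)\cong M(p,0)\oplus M(p,1)$, $M(1,1)\ot M(p,0)\cong M(p,1)$ and $M(1,0)\ot M(p,0)\cong M(p,0)$, whence
$$(\bar z-\bar y-1)[M(p,0)]=\bigl([M(p,0)]+[M(p,1)]\bigr)-[M(p,1)]-[M(p,0)]=0.$$
(Conceptually: $M(p,0)$ is projective and $[M(2,0)]=[M(1,1)]+[M(1,0)]$ in the Grothendieck group, so tensoring with $M(p,0)$ splits the nontrivial extension; the displayed computation makes this self-contained.) Hence $\phi$ factors as $\bar\phi\colon {\mathbb Z}[y,z]/I\twoheadrightarrow R(U)$.

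\emph{Rank count.} Reducing modulo $y^p-1$ identifies ${\mathbb Z}[y,z]/(y^p-1)$ with $A[z]$, where $A:={\mathbb Z}[y]/(y^p-1)\cong{\mathbb Z}^{\oplus p}$. Viewed in $A[z]$, the polynomial $(z-y-1)f_p(y,z)$ is monic in $z$ of degree $p$: the coefficient of $z^{p-1}$ in $f_p$ is $1$ (the $i=0$ term), so the coefficient of $z^{p}$ in $(z-y-1)f_p$ is $1$. Hence $A[z]/\bigl((z-y-1)f_p(y,z)\bigr)$ is a free $A$-module with basis $1,z,\dots,z^{p-1}$, thus a free ${\mathbb Z}$-module of rank $p^2$. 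Therefore $\bar\phi$ is a surjection between free ${\mathbb Z}$-modules of rank $p^2$, hence an isomorphism, and $R(U)\cong{\mathbb Z}[y,z]/I$ as rings. The only points requiring care are the boundary cases in the tensor bookkeeping (one must always arrange the tensor factors so that the smaller first index is used, so that the relevant part of \thref{xxthm0.2} applies) and the monic-ness observation that pins down the rank of the quotient; the rest is the induction above.
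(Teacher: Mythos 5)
Your proof is correct and follows essentially the same route as the paper, which sets up the identities $a^p=1$, $[M(l+1,0)]=x[M(l,0)]-a[M(l-1,0)]$, $x[M(p,0)]=(a+1)[M(p,0)]$ and $[M(l,0)]=u_l$ in Lemma~\ref{xxlem2.13} and Corollary~\ref{xxcor2.14} and then defers the remaining bookkeeping to the proof of \cite[Theorem 3.10]{CVZ}; your surjection-plus-rank-count argument (with the monicity of $(z-y-1)f_p$ in $z$ pinning down the rank $p^2$ of the quotient) is exactly that bookkeeping, written out in full.
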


The proof of this corollary is based on the tensor
decomposition of indecomposable $U$-modules
given in Theorem \ref{xxthm0.2}. Corollary \ref{xxcor0.3}
should be compared with \cite[Theorem 3.10]{CVZ}.
One basic message of Theorem \ref{xxthm0.2} and Corollary
\ref{xxcor0.3} is that the representation theory of $U$ is
similar to the representation theory of the Taft algebras,
though $U$ does not have any nontrivial grouplike elements.

Another application of Theorem \ref{xxthm0.2} is computation
of the Frobenius-Perron dimension of $U$-modules, which was studied
in \cite{Xu}. We give some comments in the last section, see
Remark \ref{xxrem5.5}.

Going back to our main topic, note that the explicit description of
the tensor of two representations of $U$ [Theorem \ref{xxthm0.2}]
is the key to understanding $U$-actions on Artin-Schelter regular algebras.
If $T$ is a noetherian Artin-Schelter regular of global dimension two,
then the existence of an inner-faithful $U$-action on $T$ forces $T$
to be a commutative polynomial ring. From now on Artin-Schelter is
abbreviated as AS.

\begin{proposition}
\label{xxpro0.4}
Let $H$ be any Hopf algebra containing $U$ as a Hopf subalgebra. If
$H$ acts inner-faithfully and homogeneously on a noetherian Koszul 
AS regular algebra $T$ of global dimension two, then $T$ is isomorphic 
to the commutative polynomial ring $\Bbbk[x_1,x_2]$.
\end{proposition}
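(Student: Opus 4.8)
The plan is to classify noetherian Koszul AS regular algebras $T$ of global dimension two admitting an inner-faithful homogeneous $U$-action, and show only the commutative polynomial ring survives. First I would recall the structure of such a $T$: by the classification of AS regular algebras of dimension two, $T$ is generated in degree one by a space $V=T_1$ with $\dim_{\Bbbk} V = 2$ or $3$, with a single quadratic relation (when $\dim V = 2$) or (in the Koszul, hence quadratic, case with $\dim V = 3$) three quadratic relations; the Koszul hypothesis pins down the Hilbert series, so actually $\dim_{\Bbbk} V = 2$ and the defining relation is of the form $x_2x_1 = q\, x_1 x_2$ or $x_2 x_1 = x_1 x_2 + x_1^2$ up to change of variables (the Jordan plane). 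So the task reduces to: among these two families, which admit an inner-faithful homogeneous $U$-action, and does such an action force commutativity?

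Next I would analyze the $U$-module structure on $V = T_1$. Since the action is homogeneous, $V$ is a two-dimensional $U$-module, so by the classification of indecomposable $U$-modules (Convention \ref{xxcon1.4}, Proposition \ref{xxpro1.7}) either $V \cong M(2,i)$ for some $i \in \mathbb{Z}_p$ or $V \cong M(1,i)\oplus M(1,j)$. The inner-faithfulness of the $U$-action on $T$ must be leveraged here: a direct sum of one-dimensional modules $M(1,i)\oplus M(1,j)$ would typically fail to be inner-faithful, because one-dimensional $U$-modules factor through a small quotient Hopf algebra of $U$ (the action of $u$ is forced to be zero on any one-dimensional module since $u$ is nilpotent and primitive, so $u$ would annihilate $T_1$ and hence all of $T$, contradicting inner-faithfulness unless $u$ acts as zero, which is impossible for an inner-faithful $U$-action). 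Thus $V \cong M(2,i)$, and I can write down explicit matrices for the actions of $u$ and $w$ on a suitable basis $\{x_1,x_2\}$: $u$ acts as a nonzero nilpotent (a single Jordan block), and $w$ acts as a semisimple element with the commutation relation $[w,u]=u$ forcing the two eigenvalues of $w$ to differ by $1$.

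Then comes the compatibility computation: the quadratic relation of $T$ spans a one-dimensional (for $\dim V = 2$) subspace $R \subseteq V \otimes V$ that must be a $U$-submodule. Using Theorem \ref{xxthm0.2}(1) with $l = m = 2$ (valid since $2 + 2 \le p$ when $p \ge 5$; the small primes $p = 2, 3$ need separate, easy treatment), we get $M(2,i)\otimes M(2,i) \cong M(1, 2i+1)\oplus M(3, 2i)$, so the only one-dimensional $U$-submodule of $V\otimes V$ is the copy of $M(1,2i+1)$. I would identify this line explicitly: it is spanned by a ``$q$-antisymmetric'' tensor, and matching it against the relation $x_2 x_1 - q\, x_1 x_2$ forces $q = 1$ (the $u$-action, being a derivation-like operator via $\Delta(u) = u\otimes 1 + 1\otimes u$, kills $x_1 x_2 - x_2 x_1$ only in the commutative case and is incompatible with the Jordan-plane relation $x_2x_1 - x_1x_2 - x_1^2$ since that element is not killed by the requisite operators). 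Hence $T \cong \Bbbk[x_1,x_2]$, and one checks conversely this $U$-action on $\Bbbk[x_1,x_2]$ is genuinely inner-faithful.

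The main obstacle I anticipate is the bookkeeping in the compatibility step: correctly computing how $u$ and $w$ act on $V\otimes V$ via the coproduct, pinning down the precise one-dimensional submodule inside $M(2,i)\otimes M(2,i)$, and showing the Jordan plane is excluded — this last point is the crux, since it is exactly where the non-semisimplicity (the nilpotent $u$) interacts with the non-commutativity of $T$, and one must verify that no choice of basis or parameter $i$ can make the Jordan-plane relation $U$-stable. The small-characteristic cases $p = 2$ and $p = 3$ (where $l + m = 4 > p$ and Theorem \ref{xxthm0.2}(2) applies instead, or where the module category degenerates) will need to be checked by hand, but these are low-dimensional and pose no conceptual difficulty.
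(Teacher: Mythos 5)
Your overall strategy matches the paper's (restrict attention to the $U$-module structure of $V=T_1$, force $V\cong M(2,i)$, then analyze the relation line inside $V\otimes V$), but there is a genuine gap at the key step. You assert that in the decomposition $M(2,i)\otimes M(2,i)\cong M(1,2i+1)\oplus M(3,2i)$ ``the only one-dimensional $U$-submodule of $V\otimes V$ is the copy of $M(1,2i+1)$.'' This is false: the summand $M(3,2i)$ is indecomposable but not simple, and its socle is a second one-dimensional submodule, spanned by $x_2\otimes x_2$ (indeed $u\cdot(x_2\otimes x_2)=0$ and $w\cdot(x_2\otimes x_2)=(2i+2)\,x_2\otimes x_2$). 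The socle of $V\otimes V$ is two-dimensional, spanned by $z_3=-x_1\otimes x_2+x_2\otimes x_1$ and $z_4=x_2\otimes x_2$ (Corollary~\ref{xxcor2.6}), and these span non-isomorphic simples, so the relation line $\Bbbk r$ is a priori either $\Bbbk z_3$ or $\Bbbk z_4$. The case $r=z_4$, i.e.\ the relation $x_2^2=0$, must be excluded by a separate argument; the paper does this by invoking the fact that an AS regular algebra of global dimension two is a domain. Without that step your argument is incomplete. (Incidentally, computing the socle directly via Corollary~\ref{xxcor2.6} is uniform in $p$ and removes the need for your separate treatment of $p=2,3$.)

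A second, smaller gap: the proposition assumes only that the \emph{$H$}-action is inner-faithful, whereas your whole argument is phrased in terms of an inner-faithful \emph{$U$}-action. The restriction of an inner-faithful action to a Hopf subalgebra need not be inner-faithful in general; the paper supplies Lemma~\ref{xxlem4.2} for this (using that $U$ is pointed, so every nonzero Hopf ideal of $U$ contains a nonzero skew primitive). In your argument the fix is short: what you actually use is that $u\cdot T\neq 0$, and if $u\cdot T=0$ then, since $u$ is primitive in $H$ as well, $HuH$ is a nonzero Hopf ideal of $H$ annihilating $T$, contradicting inner-faithfulness of the $H$-action. This should be said explicitly. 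The rest of your outline --- excluding $V=M(1,i)\oplus M(1,j)$ and identifying $\Bbbk z_3$ with the commutator relation, whence $T\cong\Bbbk[x_1,x_2]$ --- is correct and agrees with Lemma~\ref{xxlem3.1} and Proposition~\ref{xxpro3.2}(1) of the paper.
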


Explicit $U$-actions on a noetherian Koszul AS regular algebra of global
dimension two are given in Proposition \ref{xxpro3.2}. The next result
is a classification of all inner-faithful $U$-actions on $T$ when $T$
has global dimension 3. Historically, over an algebraically closed
field of characteristic zero, AS regular algebras of
global dimension three were classified by Artin, Schelter, Tate and
Van den Bergh in their seminal papers \cite{AS, ATV1, ATV2}. Since
our base field has positive characteristic, we have to use a
different method. But the purpose of this paper is not to classify 
all AS regular algebras over a field of positive characteristic 
which is another extremely difficult project, see Remark 
\ref{xxrem0.7}(2).

\begin{theorem}
\label{xxthm0.5}
Let $\Bbbk$ be an algebraically closed field of characteristic $p>0$.
Let $T$ be a noetherian Koszul AS regular $\Bbbk$-algebra of global
dimension three and let $V=T_1$. Suppose there is an inner-faithful
and homogeneous $U$-action on $T$. Then one of the following occurs,
up to a change of basis.
\begin{enumerate}
\item[(1)]
$T$ is the commutative polynomial ring $\Bbbk[V]$
where the left $U$-module $V$ is either $M(3,i)$ or
$M(2,i)\oplus M(1,j)$ for some $i,j\in {\mathbb Z}_p$.
\item[(2)] 
$p=3$, $V=M(3,i)=\Bbbk x_1\oplus \Bbbk x_2\oplus \Bbbk x_3$ as
in Convention \ref{xxcon1.4}, and the relations of $T$ are
$$x_2x_1-x_1x_2+x_3^2=0, \quad
x_3x_1-x_1x_3=0, \quad x_3x_2-x_2x_3=0.$$
\end{enumerate}
In the rest of the theorem let $V=M(2,i)\oplus M(1,j)=
(\Bbbk x_1\oplus \Bbbk x_2)\oplus \Bbbk y$ for some
$i,j\in {\mathbb Z}_p$.
\begin{enumerate}
\item[(3)] 
$j=i+1$ and the relations of $T$ are
$$
x_1x_2 +  x_1y- yx_1=0,\quad
x_2x_1 -x_1x_2=0,\quad x_2^2 +   x_2y- yx_2=0.
$$
\item[(4)] 
$(i-j)(2i+1-2j)\neq 0$ in ${\mathbb Z}_p$, and the relations
in $T$ are
$$yx_1+ a x_1 y=0, \quad
yx_2+a x_2 y=0, \quad x_2x_1-x_1x_2=0$$
for $a\neq 0$ in $\Bbbk$.
\item[(5)] 
$2i+1-2j=0$ in
${\mathbb Z}_p$, and the relations in $T$ are
$$yx_1+ a x_1 y=0, \quad
yx_2+a x_2 y=0, \quad
x_2x_1-x_1x_2+ \epsilon y^2=0$$
where $a\neq 0$, $\epsilon=0$ or $1$ and $\epsilon (a^2-1)=0$.
\item[(6)] 
$i=j$ in ${\mathbb Z}_p$,
and the relations in $T$ are
$$yx_1+ a x_1 y+ b y^2=0, \quad
yx_2+a x_2 y=0,\quad
x_2x_1-x_1x_2+ \epsilon x_2 y=0$$
where $a\neq 0$, $\epsilon=0$ or $1$ and  $(a+1)(b-\epsilon)=0$.
\item[(7)] 
$j=i+2$ in ${\mathbb Z}_p$ and the relations in $T$ are
$$
y x_2- x_2 y=0,\quad
x_1 x_2- x_2 x_1 +c x_2 y+ b y^2 =0,\quad
x_2^2 + y x_1-x_1 y +d y^2=0
$$
where $c\neq 0$ or $d\neq 0$ only if $p=2$ and
where $b \neq 0$ only if $p=3$.
\item[(8)] 
$p=2$, $i=j$ in ${\mathbb Z}_2$ and the relations in $T$ are
$$
yx_2+ x_2y=0,\quad
x_1^2 + y^2+e x_2^2=0,\quad
x_1 x_2+x_2 x_1=0,
$$
where $e\in \Bbbk$.
\item[(9)] 
$p=2$, $i\neq j$ in ${\mathbb Z}_2$ and the relations in $T$ are
$$
y x_2+ x_2 y +b y^2=0,\quad
x_1^2 +c x_2 y+ y^2+e x_2^2,\quad
x_1 x_2+x_2 x_1=0,
$$
with $e\in \Bbbk$ and $(b,c)=(0,1)$ or $(1,0)$.
\item[(10)] 
$p=2$, $i\neq j$ in ${\mathbb Z}_2$, and the relations of $T$
are one of the following forms:
\begin{enumerate}
\item[(10a)]
$c\in \Bbbk$, and
$$
x_1^2+c (x_2y+yx_2)+y^2=0,
x_1x_2+x_2x_1=0,
x_2^2+x_2y+yx_2=0.
$$
\item[(10b)]
$e\in \Bbbk$, and
$$
x_1^2+ x_2y+yx_2+e y^2=0,\quad
x_1x_2+x_2x_1=0,\quad
x_2^2+y^2=0.
$$
\end{enumerate}
\end{enumerate}
\end{theorem}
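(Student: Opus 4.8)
The plan is to run a case analysis on the $U$-module structure of $V=T_1$. First I would record two facts. Since $u$ is primitive, it acts on $T$ as a derivation, so $uV=0$ would give $uT=0$; as $(u)$ is a nonzero Hopf ideal of $U$, this contradicts inner-faithfulness. Conversely $U$ is connected and primitively generated with space of primitives $\mathfrak g$, so every proper Hopf quotient of $U$ is the restricted enveloping algebra of a proper restricted quotient of $\mathfrak g$; the only such quotients are $\Bbbk w$ and $0$, hence the only nonzero Hopf ideals of $U$ are $(u)$ and $\ker\e$. Therefore a homogeneous $U$-action on $T$ is inner-faithful precisely when $uV\neq 0$. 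On the other hand a noetherian Koszul AS regular algebra of global dimension three is quadratic with $\dim_{\Bbbk} V=3$ and a three-dimensional relation space $R\subseteq V\ot V$, and homogeneity forces $R$ to be a $U$-submodule of $V\ot V$ (it is the kernel of the $U$-linear multiplication $V\ot V\ra T_2$). Combining this with the classification of indecomposable $U$-modules (Proposition~\ref{xxpro1.7}) and the constraint $uV\neq 0$, there are exactly two possibilities: $V\cong M(3,i)$, which forces $p\geqslant 3$, or $V\cong M(2,i)\oplus M(1,j)$.

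Next I would, in each case, compute $V\ot V$ as a $U$-module from Theorem~\ref{xxthm0.2} and enumerate all three-dimensional $U$-submodules $R$, which are the only candidate relation spaces. Since each indecomposable $M(l,k)$ is uniserial, the submodule lattice of $V\ot V$ is determined once one records which of the occurring weights coincide: for $V=M(3,i)$ one gets $V\ot V\cong M(1,\cdot)\oplus M(3,\cdot)\oplus M(5,\cdot)$ when $p\geqslant 5$ but three copies of $M(3,\cdot)$ when $p=3$; for $V=M(2,i)\oplus M(1,j)$ the weights that matter are $2i$, $2i\pm 1$, $i+j$, $2j$ (with extra degeneration when $p\in\{2,3\}$), so the analysis branches according to the value of $i-j$ in ${\mathbb Z}_p$ and on $p$. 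Each branch yields finitely many candidate triples of quadratic relations depending on a few scalars, which I would normalize using the $U$-module automorphisms of $V$ and the Hopf automorphisms $u\mapsto\lambda u$ (and $w\mapsto w+1$ when $p=2$), already bringing the candidates into the shapes listed in (1)--(10).

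For each surviving candidate I would then test whether $T=T(V)/(R)$ is noetherian Koszul AS regular of global dimension three. A quadratic algebra with three generators and three relations has this property exactly when it is Koszul with Hilbert series $(1-t)^{-3}$ and is AS-Gorenstein --- equivalently, when its quadratic dual is Frobenius with Hilbert series $(1+t)^3$, or when its relations are the partial contractions of a twisted superpotential in $V^{\ot 3}$. Most candidates fail already at the Hilbert-series level (typically $\dim_{\Bbbk} T_3>10$); the survivors for $V=M(3,i)$ are only the commutative polynomial ring and, when $p=3$, the algebra of part (2), while those for $V=M(2,i)\oplus M(1,j)$ produce part (1) and parts (3)--(10). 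Regularity of each survivor is then confirmed directly from the presentation, e.g.\ by exhibiting the length-three Koszul resolution of $\Bbbk$ or the corresponding superpotential. Finally, by the first paragraph every $T$ passing this test automatically carries an inner-faithful $U$-action, so the survivors are exactly the algebras in the theorem.

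The main obstacle is the AS-regularity step in positive characteristic. One must be careful to miss no three-dimensional submodule among the many degenerate weight configurations, and --- since the point-scheme classification of \cite{ATV1,ATV2} is not directly available over a field of characteristic $p$ --- each surviving algebra has to be certified by hand via Koszulity, the Hilbert series, and the Gorenstein condition. The primes $p=2$ and $p=3$ are genuinely special, both because extra quadratic relations become available when $M(5,\cdot)$, respectively $M(3,\cdot)$, degenerates, and because the Hopf automorphism $w\mapsto w+1$ exists only in characteristic $2$; this is why these small characteristics account for the bulk of the list, namely cases (2) and (7)--(10).
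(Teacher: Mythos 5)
Your opening reduction (inner-faithfulness $\Leftrightarrow u\cdot V\neq 0$, hence $V\cong M(3,i)$ with $p\geq 3$ or $V\cong M(2,i)\oplus M(1,j)$, and $R$ a $3$-dimensional $U$-submodule of $V\ot V$) is exactly the paper's starting point (Lemma \ref{xxlem3.1} and Convention \ref{xxcon3.3}). After that, however, your plan diverges from the paper and, more importantly, leaves the decisive content unexecuted. The assertion that the submodule lattice of $V\ot V$ ``is determined once one records which of the occurring weights coincide'' is too optimistic: whenever two indecomposable summands of $V\ot V$ share composition factors in compatible radical layers, the $3$-dimensional submodules form positive-dimensional families, and it is precisely these families that produce the free parameters $a,b,c,d,e$ in the statement. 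A raw enumeration is therefore not a finite combinatorial check, and taming it is where the work lies. The paper does not enumerate submodules; it first uses the fact that a noetherian connected graded algebra of global dimension three is a domain \cite{Sn} to prove $R\cap(y\ot V)=R\cap(V\ot y)=0$ (so, e.g., no relation $x_2\ot x_2$ or $y\ot v$ can occur alone), then stratifies by $\dim_\Bbbk(R\cap W)\in\{0,1,2\}$ with $W=y\ot V+V\ot y$ (resp.\ $x_3\ot V+V\ot x_3$); in the top stratum $y$ (resp.\ $x_3$) is normal and the problem collapses to the already-solved global dimension two case via $T/(y)$ and Proposition \ref{xxpro3.2}. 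Your proposal never invokes the domain property or this reduction, and without them the ``filter by AS regularity'' step must handle far more candidates than the paper ever confronts.

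There are also two smaller but genuine errors/omissions. The map $w\mapsto w+1$ in characteristic $2$ is an algebra automorphism of $U$ but not a Hopf algebra automorphism (it preserves neither $\e$ nor $\Delta$), so it is not available as a normalization of actions. And ``Koszul with Hilbert series $(1-t)^{-3}$ and AS--Gorenstein'' does not by itself deliver the noetherian hypothesis in positive characteristic; the paper argues this separately via Ore-extension presentations and ideas from \cite{CV}. Finally, the certification step that you correctly identify as the main obstacle --- resolving overlap ambiguities to force $\dim_\Bbbk T_3=10$, testing the Koszul dual for the Frobenius property, and discarding non-domains --- is exactly where Lemmas \ref{xxlem3.5}, \ref{xxlem3.8}, \ref{xxlem3.9} and \ref{xxlem3.10} spend essentially all of their effort; a proof along your lines would have to carry out an equivalent computation case by case, so as written the proposal is a workable outline rather than a proof.
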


Combining Theorem \ref{xxthm0.5} with Lemma \ref{xxlem4.2}(3),
we obtain

\begin{corollary}
\label{xxcor0.6}
Let $H$ be any Hopf algebra containing $U$ as a Hopf subalgebra. If 
$H$ acts inner-faithfully and homogeneously on a noetherian Koszul 
AS regular algebra $T$ of global dimension three, then $T$ is one 
of the AS regular algebras listed in Theorem {\rm{\ref{xxthm0.5}}}.
\end{corollary}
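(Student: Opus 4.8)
The plan is to deduce the corollary from Theorem~\ref{xxthm0.5} by passing from the $H$-action to the $U$-action. Restricting the $H$-action along the Hopf algebra inclusion $U\hookrightarrow H$ yields a $U$-action on $T$, and this action is again homogeneous since the inclusion is a Hopf map and the original action preserves the grading of $T$. Hence the only thing that must be verified is that the induced $U$-action on $T$ is \emph{inner-faithful}; granting this, $T$ is a noetherian Koszul AS regular algebra of global dimension three equipped with an inner-faithful homogeneous $U$-action, so Theorem~\ref{xxthm0.5} applies verbatim and places $T$ in the stated list. Thus everything reduces to the claim recorded in Lemma~\ref{xxlem4.2}(3), that an inner-faithful $H$-action restricts to an inner-faithful $U$-action.

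To prove that claim I would first determine the Hopf ideals of $U$. Since $U$ is the restricted enveloping algebra of $\mathfrak g=\Bbbk u\oplus\Bbbk w$ with $[w,u]=u$ and $u^{[p]}=0$, the only restricted ($p$-)ideals of $\mathfrak g$ are $0$, $\Bbbk u$, and $\mathfrak g$; correspondingly $U$ has exactly three Hopf ideals, namely $0$, the ideal $(u)$ (with quotient $\Bbbk[w]/(w^{p}-w)$), and the augmentation ideal $U^{+}$. In particular, every nonzero Hopf ideal of $U$ contains the element $u$. Consequently, if the induced $U$-action on $T$ were not inner-faithful, there would be a nonzero Hopf ideal of $U$ annihilating $T$, and therefore $u\cdot T=0$.

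It remains to exclude $u\cdot T=0$. Here the key point is that $u$ is a primitive element of $H$: because $U\hookrightarrow H$ is a Hopf map we have $\Delta_{H}(u)=u\otimes 1+1\otimes u$ and $S_{H}(u)=-u$, so the two-sided ideal $HuH$ is a coideal, is stable under $S$, and is contained in $\ker\varepsilon$; that is, $HuH$ is a Hopf ideal of $H$, and it is nonzero since $u\neq 0$. On the other hand, since the $H$-action is unital we have $H\cdot T=T$, so $(HuH)\cdot T=H\cdot(u\cdot(H\cdot T))=H\cdot(u\cdot T)=0$. Thus $HuH$ is a nonzero Hopf ideal of $H$ killing $T$, which contradicts the inner-faithfulness of the $H$-action. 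Hence $u\cdot T\neq 0$, the induced $U$-action is inner-faithful, and Theorem~\ref{xxthm0.5} finishes the proof.

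A word on where the difficulty lies: the corollary itself is only an assembly statement, and I expect no genuine obstacle in the argument above beyond routine bookkeeping. The two substantive inputs are Lemma~\ref{xxlem4.2}(3), whose content is exactly the short Hopf-ideal-plus-primitivity computation just sketched, and Theorem~\ref{xxthm0.5}, which carries the entire classification burden. The one point that warrants care is the standing hypotheses on $\Bbbk$: Theorem~\ref{xxthm0.5} is stated over an algebraically closed field, so the corollary should be understood under that same assumption (or else reduced to it before applying the theorem).
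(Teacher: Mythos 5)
Your proposal is correct and follows essentially the same route as the paper: the corollary is reduced to the inner-faithfulness of the restricted $U$-action (the paper's Lemma~\ref{xxlem4.2}(3)), which is proved by noting that every nonzero Hopf ideal of $U$ contains $u$ and that $u$, being primitive in $H$, generates a nonzero Hopf ideal $HuH$ of $H$ annihilating $T$ whenever $u\cdot T=0$ --- exactly the argument of Lemma~\ref{xxlem4.2}(1,2), where the paper invokes the general fact that nonzero Hopf ideals of a pointed Hopf algebra contain nonzero skew primitives, while you instead list the three Hopf ideals of $U$ explicitly (as the paper itself does in the proof of Lemma~\ref{xxlem3.1}). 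Your closing caveat about the algebraically closed hypothesis inherited from Theorem~\ref{xxthm0.5} is a fair and correct observation.
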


\begin{remark}
\label{xxrem0.7} 
The following remarks aim to clarify potential confusion.
\begin{enumerate}
\item[(1)]
The list in Theorem {\rm{\ref{xxthm0.5}}} is long. This is due
to the fact that there are many different AS regular algebras
of global dimension three. Even for the same $T$ there could be
different and non-equivalent $U$-actions on $T$.
\item[(2)]
The classification of all noetherian Koszul AS regular algebras
of global dimension three over a field of positive characteristic
has not been done. This could be a huge project which is parallel
to the work of Artin, Schelter, Tate and Van den Bergh
\cite{AS, ATV1, ATV2}. The AS regular algebras listed in Theorem
\ref{xxthm0.5} form a very small portion in the class of all
noetherian Koszul AS regular algebras of global dimension three.
\item[(3)]
If we are given a Koszul AS regular algebra $T$ generically,
it is likely that there is no inner-faithful $U$-action on $T$.
For example, let $T$ be a skew polynomial ring
$\Bbbk_{p_{ij}}[x_1,x_2,x_3]$ where $p_{ij}\neq 1$ for all $i<j$,
then there is no inner-faithful $U$-action on $T$
{\rm{[Proposition \ref{xxpro4.3}]}}.
\item[(4)]
There are some obvious overlaps between part {\rm{(1)}} and parts
{\rm{(4,5,6)}} in Theorem {\rm{\ref{xxthm0.5}}}.
\item[(5)]
Given a specific Hopf algebra $H$ strictly containing $U$, there could 
be one or more AS regular algebras $T$ listed in Theorem \ref{xxthm0.5}
on which $H$ cannot act inner-faithfully. 
\item[(6)]
Given a specific Hopf algebra $H$ containing $U$, there could 
be more than one $H$-action on the same $T$, similar to 
some parts of Theorem {\rm{\ref{xxthm0.5}}}.
\item[(7)]
In a weak sense, Corollary {\rm{\ref{xxcor0.6}}} provides a 
``universal'' classification of $H$-actions on noetherian Koszul 
AS regular algebras of global dimension three for all $H$ 
containing $U$. See Observation {\rm{\ref{xxobs4.5}}} for more comments.
\end{enumerate}
\end{remark}

There are a few reasons for us to consider to this particular 
Hopf algebra $U$.

The first one is that $U$ is of finite representation type. This
makes it possible to list all $U$-module $V:=T_1$, which serves as
an initial step in our classification. If $U$ were of wild
representation type, it is unrealistic to list all $U$-modules
(even for a given dimension).

The second reason is that $U$ is generated by primitive
elements $u$ and $w$. So $u$ and $w$ acts on an algebra $T$
as derivatives. This kind of well-understood operation is
helpful when we are dealing with a lot of computation.

The first two reasons make the project possible. The third reason
is our motivation, namely, $U$ is not semisimple and does not
contain any nontrivial grouplike elements. The invariant theory
under $U$-action is different from the classical invariant theory of
polynomial rings under finite group actions. Observation
\ref{xxobs4.1} lists some significant differences in terms
of homological properties. We would like to use the examples
in this paper to further study non-semisimple Hopf actions on
AS regular algebras.

Classifying $U$-actions on AS regular algebras is also helpful
for understanding other $H$-actions on AS regular
algebras when $H$ is related to $U$, see Corollary \ref{xxcor0.6},
Remark \ref{xxrem0.7} and Observation \ref{xxobs4.5}.

The last reason is that $U$ appears in several different
topics of recent interest. The Hopf algebra $U$ originates from
Lie theory, and is related to (small) quantum group theory,
computation of Frobenius-Perron dimension, study of the Azumaya locus
of a family of PI Hopf algebras, called
{\it iterated Hopf Ore extensions}, or {\it IHOEs}, as we explain
next. In \cite{BZ}, a family of noncommutative PI Hopf algebras $H$ in
characteristic $p$ were studied. The algebra $U$ appears
naturally as a ``fiber'' at every non-Azumaya point of 2-step IHOEs
in \cite[Proposition 8.2(3)]{BZ}. The only other possible fiber of
other 2-step IHOEs in \cite{BZ} is the Hopf algebra
$U_0:=\Bbbk[X,Y]/(X^p,Y^p)$, -- the restricted
enveloping algebra of the abelian Lie algebra of dimension 2 with
trivial restriction -- see \cite[Proposition 8.2(2)]{BZ}. Geometric
and representation theoretic properties of the Hopf algebras in
\cite{BZ} are largely encoded in the properties of algebras $U$
and $U_0$. Using the representations of $U$, we can describe
all brick modules over 2-step IHOEs, see Remark \ref{xxrem5.6}.

This paper is organized as follows. Sections \ref{xxsec1}
and \ref{xxsec2} follow from the structure of \cite{CVZ} and
prove Theorem \ref{xxthm0.2} and Corollary \ref{xxcor0.3}. In Section
\ref{xxsec3} we classify all $U$-actions on noetherian Koszul
AS regular algebras of global dimension at most three as stated
in Proposition \ref{xxpro3.2} and Theorem \ref{xxthm0.5}.
We give some easy, but interesting, observations in Section \ref{xxsec4}.
Section \ref{xxsec5} contains some comments, projects, 
and remarks.

\section{Preliminaries and Representations of $U$}
\label{xxsec1}

Unless otherwise stated, all algebras,
Hopf algebras and modules are defined over $\Bbbk$. All modules
are left modules and all maps are $\Bbbk$-linear.
We use $\otimes$ for $\otimes_{\Bbbk}$. For the theory of Hopf algebras,
we refer to the standard text books \cite{Ka, Ma, Mo, Sw}.
Let $\Bbbk^{\ti}$ denote the multiplicative group of all nonzero elements
in the field $\Bbbk$.

It is well-known that $\mathbb{Z}_p$ is a subfield of $\Bbbk$. For an
integer $r\in{\mathbb Z}$, the image of $r$ under the canonical
epimorphism $\mathbb {Z}\ra\mathbb {Z}_p$ is still denoted by $r$.
Then $\mathbb{Z}_p^{\times}=\{1, 2, \cdots, p-1\}$, which is a cyclic
subgroup of the multiplicative group $\Bbbk^{\times}$.

Let $H$ be a finite dimensional Hopf algebra. The {\it representation
rings} (or the {\it Green rings}) $r(H)$ and $R(H)$ can be defined as
follows. Recall that $r(H)$ is the abelian group generated by the
isomorphism classes $[V]$ of finitely generated $H$-modules $V$ modulo
the relations $[M\oplus V]=[M]+[V]$. The multiplication of $r(H)$ is
induced by the tensor product of $H$-modules, that is, $[M][V]=[M\ot V]$.
Then $r(H)$ is an associative ring. Recall that $R(H)$ is an associative
$\Bbbk$-algebra defined by $\Bbbk\ot_{\mathbb Z}r(H)$. Note that $r(H)$
is a free abelian group with the $\mathbb Z$-basis
$\{[V]|V\in{\rm ind}(H)\}$, where ${\rm ind}(H)$ denotes the category of
all finitely generated indecomposable $H$-modules.

For a module $M$ over a finite dimensional algebra, let ${\rm rl}(M)$
denote the Loewy length (=radical length=socle length) of $M$, and
${\rm l}(M)$ denote the length of $M$. Let $P(M)$ denote the
projective cover of $M$ and $I(M)$ denote the injective hull of $M$.

Let $U$ be defined as in Definition \ref{xxdef0.1}. The following
facts about $U$ are folklore. Let $B$ denote
the subalgebra of $U$ generated by $w$. Then $B$ is a $p$-dimensional
semisimple Hopf subalgebra of $U$. Moreover, there is a Hopf algebra
epimorphism $\pi: U\ra B$ defined by $\pi(u)=0$ and $\pi(w)=w$. It
follows that $\ker \pi=(u)\supseteq J(U)$, the Jacobson radical of $U$.
On the other hand, since $Uu=uU$ and $u^p=0$, $J(U)\supseteq (u)=uU$,
the ideal of $U$ generated by the normal element $u$. Hence $\ker \pi
=(u)=J(U)$. Thus, an $U$-module $M$ is semisimple if and only if
$u \cdot M=0$, and moreover, $M$ is simple if and only if $u \cdot M=0$
and $M$ is simple as a module over $B$. Note that
$w^p-w=\prod_{i \in {\mathbb Z}_p} (w-i)$ over a field of characteristic
$p$. Therefore, we have the following lemma, and its proof follows
from the fact that $w$ acts semisimply on any finite dimensional
$U$-module.

\begin{lemma}
\label{xxlem1.1}
For every finite dimensional $U$-module $M$ and every $i \in
{\mathbb Z}_p\subseteq \Bbbk$, let
$M[i]=\{m\in M| w \cdot m=i m\}$, then we have
$$M=\bigoplus_{i \in{\mathbb Z}_p}M[i],\quad
\text{and} \quad uM[i]\subseteq M[i+1].$$
\end{lemma}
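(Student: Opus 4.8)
The plan is to deduce both assertions directly from the defining relations \eqref{E0.1.1}, as already flagged in the paragraph preceding the lemma.

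First I would establish that $w$ acts semisimply on any finite dimensional $U$-module $M$. Since $w^p=w$ holds in $U$, the operator $w|_M$ satisfies $t^p-t=0$. Over the prime field $\mathbb{Z}_p\subseteq\Bbbk$ one has the factorization $t^p-t=\prod_{i\in\mathbb{Z}_p}(t-i)$ into pairwise distinct linear factors, so the minimal polynomial of $w|_M$ divides a separable (squarefree) polynomial whose roots all lie in $\mathbb{Z}_p\subseteq\Bbbk$. Hence $w|_M$ is diagonalizable over $\Bbbk$ with eigenvalues among the elements of $\mathbb{Z}_p$, and the primary (eigenspace) decomposition of $M$ with respect to $w$ collapses to $M=\bigoplus_{i\in\mathbb{Z}_p}M[i]$, where $M[i]=\ker(w-i)$ is precisely the subspace defined in the statement.

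Second, for the inclusion $uM[i]\subseteq M[i+1]$ I would use the relation $[w,u]=u$, rewritten in $U$ as $wu=uw+u=u(w+1)$. Then for $m\in M[i]$ one computes
\[
w\cdot(u\cdot m)=(wu)\cdot m=u\cdot\big((w+1)\cdot m\big)=u\cdot\big(w\cdot m+m\big)=u\cdot\big(i\,m+m\big)=(i+1)(u\cdot m),
\]
so $u\cdot m\in M[i+1]$, with the index $i+1$ read in $\mathbb{Z}_p$, which is consistent with the cyclic grouping $\bigoplus_{i\in\mathbb{Z}_p}$.

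There is no serious obstacle here: once semisimplicity of $w$ is in hand the second part is a one-line commutator computation. The only point deserving a moment's care is that $\Bbbk$ is not assumed algebraically closed, so one must note that all eigenvalues of $w|_M$ automatically lie in the prime subfield $\mathbb{Z}_p$ — this is exactly what makes the eigenspace decomposition valid over $\Bbbk$ itself — and this is already implicit in the observation $w^p-w=\prod_{i\in\mathbb{Z}_p}(w-i)$ recorded just before the lemma.
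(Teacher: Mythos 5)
Your argument is correct and is exactly the one the paper intends: the authors dispose of the lemma in the preceding paragraph by noting $w^p-w=\prod_{i\in\mathbb{Z}_p}(w-i)$, so that $w$ acts semisimply with eigenvalues in the prime field, and the inclusion $uM[i]\subseteq M[i+1]$ is the same one-line computation from $wu=u(w+1)$. Your only addition is to spell out the separability/eigenvalue-in-$\mathbb{Z}_p$ point explicitly, which is a faithful expansion rather than a different route.
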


\begin{lemma}
\label{xxlem1.2}
There are $p$ non-isomorphic simple $U$-modules
$\{S_i\}_{i\in {\mathbb Z}_p}$, and each $S_i$
is 1-dimensional and determined by
$$u\cdot x=0,\quad {\text{and}}\quad w\cdot x=i x,$$
where $x$ is a basis element in $S_i$.
\end{lemma}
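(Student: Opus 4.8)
The plan is to reduce the classification to the semisimple quotient $U/J(U)$ and apply the Chinese Remainder Theorem. Recall from the discussion preceding the statement that $J(U)=(u)=\ker\pi$, so a $U$-module is simple if and only if it is annihilated by $u$ and is simple as a module over $B=U/(u)\cong\Bbbk[w]/(w^p-w)$. Hence it suffices to classify the simple $B$-modules and pull them back along $\pi$.

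First I would use the factorization $w^p-w=\prod_{i\in{\mathbb Z}_p}(w-i)$ in $\Bbbk[w]$, which holds because ${\mathbb Z}_p$ is a subfield of $\Bbbk$, so that the linear factors $w-i$ are pairwise coprime. By the Chinese Remainder Theorem, $B\cong\prod_{i\in{\mathbb Z}_p}\Bbbk[w]/(w-i)\cong\Bbbk^p$. Therefore $B$ is semisimple with exactly $p$ pairwise non-isomorphic simple modules, each $1$-dimensional, the $i$-th one being $\Bbbk$ on which $w$ acts as multiplication by $i$. Pulling these back along $\pi$, for each $i\in{\mathbb Z}_p$ we obtain the $1$-dimensional $U$-module $S_i$ with $u\cdot x=0$ and $w\cdot x=ix$; one checks directly that this respects the defining relations of $U$ (using $i^p=i$ in ${\mathbb Z}_p$ for the relation $w^p=w$). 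The $S_i$ are pairwise non-isomorphic since $w$ acts on them by the distinct scalars $i$, and by the identification of simple $U$-modules with simple $B$-modules every simple $U$-module is isomorphic to exactly one $S_i$.

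There is essentially no obstacle here: the only point requiring care is that $\Bbbk$ contains ${\mathbb Z}_p$ so that $w^p-w$ genuinely splits into distinct linear factors, which is recorded in the preliminaries. As an alternative to invoking $J(U)=(u)$ and CRT, one could argue directly from Lemma \ref{xxlem1.1}: any finite-dimensional $U$-module decomposes into $w$-eigenspaces $M[i]$ with $i\in{\mathbb Z}_p$, a simple module must be annihilated by $u$ (as $u$ is nilpotent and normal, so $uM$ is a proper submodule), and a simple module annihilated by $u$ that lies in a single eigenspace $M[i]$ must be $1$-dimensional, hence isomorphic to $S_i$.
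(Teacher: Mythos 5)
Your proposal is correct and follows essentially the same route as the paper: the paragraph preceding the lemma establishes that $J(U)=(u)=\ker\pi$, so simple $U$-modules are exactly the simple $B$-modules pulled back along $\pi$, and the factorization $w^p-w=\prod_{i\in{\mathbb Z}_p}(w-i)$ then gives the $p$ one-dimensional simples. Your explicit invocation of the Chinese Remainder Theorem and the check that $i^p=i$ just makes precise what the paper leaves implicit, so there is nothing to correct.
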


Note that $J(U)^m=u^mU$ for all $m\geqslant 1$. Hence
$J(U)^{p-1}\neq 0$ and $J(U)^p=0$. This means that
the Loewy length of $U$ is $p$. Since every simple
$U$-module is 1-dimensional, ${\rm l}(M)={\rm dim}(M)$
for all $U$-modules $M$. Let $M$ be an $U$-module.
Since $J(U)^s=Uu^s=u^sU$, we have $\rad^s(M)
=u^s\cdot M$ for all $s\geqslant 1$.

\begin{lemma}
\label{xxlem1.3}
Let $1\leqslant l\leqslant p$ and $i\in \mathbb{Z}_p$.
Then there is an algebra homomorphism $\rho_{l,i}: U\ra M_l(k)$
given by
$$\rho_{l,i}(u)=\left(
\begin{array}{ccccc}
0 &  &  &  &  \\
1 & 0 &  &  &  \\
& 1 & \ddots &  & \\
&   & \ddots & \ddots &  \\
&   &   &1 & 0 \\
\end{array}
\right),\
\rho_{l,i}(w)=\left(
                  \begin{array}{ccccc}
                    i&&&&\\
                    &i+1&&&\\
                    &&i+2&&\\
                    &&&\ddots&\\
                    &&&&i+l-1\\
                  \end{array}
                \right).$$
Let $M(l,i)$ denote the corresponding left $U$-module.
\end{lemma}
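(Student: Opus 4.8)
The plan is to exploit the presentation of $U$ recorded in Definition \ref{xxdef0.1}: by construction $U$ is the quotient of the free algebra $\Bbbk\langle u,w\rangle$ by the two-sided ideal generated by $u^p$, $w^p-w$ and $[w,u]-u$. Hence producing an algebra homomorphism $\rho_{l,i}\colon U\to M_l(\Bbbk)$ amounts precisely to exhibiting matrices $N:=\rho_{l,i}(u)$ and $D:=\rho_{l,i}(w)$ in $M_l(\Bbbk)$ that satisfy those three relations; the homomorphism then extends uniquely by the universal property, and turns $\Bbbk^{l}$ into the left $U$-module denoted $M(l,i)$. So the whole proof reduces to verifying three matrix identities for the explicit $N$ (lower shift matrix) and $D$ (diagonal matrix $\mathrm{diag}(i,i+1,\dots,i+l-1)$).

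First I would check $N^p=0$. Since the only nonzero entries of $N$ are the $1$'s on the first subdiagonal, $N$ is a single nilpotent Jordan block and $N^l=0$; because $l\leqslant p$ this gives $N^p=0$. Next I would check $D^p=D$. The diagonal entries of $D$ all lie in the prime subfield $\mathbb{Z}_p\subseteq\Bbbk$, and every $c\in\mathbb{Z}_p$ satisfies $c^p=c$, so $D^p=D$ entrywise.

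Finally I would verify $DN-ND=N$ by a one-line entry computation: the $(k+1,k)$-entry of $DN$ is $D_{k+1,k+1}=i+k$ and the $(k+1,k)$-entry of $ND$ is $D_{k,k}=i+k-1$, so the $(k+1,k)$-entry of $DN-ND$ equals $1=N_{k+1,k}$, while every other entry of $DN-ND$ vanishes since $N$ has no other nonzero entries. Thus $DN-ND=N$, which is the relation $[w,u]=u$ transported through $\rho_{l,i}$.

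There is essentially no obstacle here: the statement is a routine relation check. The only two non-mechanical observations are that the presentation of $U$ reduces the problem to verifying the relations on the two generators, and that $w^p=w$ is handled by the identity $c^p=c$ on $\mathbb{Z}_p$ (Fermat's little theorem). Once the three identities above are in place, the existence of $\rho_{l,i}$, and hence the definition of $M(l,i)$, is immediate.
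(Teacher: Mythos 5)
Your proposal is correct and is exactly the ``straightforward verification'' that the paper's one-line proof alludes to: reduce to checking the three defining relations of $U$ on the two matrices, then verify $N^p=0$ from $N^l=0$ and $l\leqslant p$, $D^p=D$ from Fermat's little theorem on $\mathbb{Z}_p$, and $DN-ND=N$ by the subdiagonal entry computation. Nothing is missing.
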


\begin{proof}
It follows from a straightforward verification.
\end{proof}

In some other papers when one uses a different convention,
the matrix $\rho_{l,i}(u)$ in Lemma \ref{xxlem1.3} should 
be replaced by its transpose. But we fix the following
convention.

\begin{convention}
\label{xxcon1.4}
By Lemma {\rm{\ref{xxlem1.3}}}, the module $M(l, i)$ has
a $\Bbbk$-basis $\{x_1, x_2, \cdots, x_l\}$ such that
$w\cdot x_j=(i+j-1)x_j$ for all $1\leqslant j\leqslant l$
and
$$u\cdot x_j=\left\{\begin{array}{ll}
x_{j+1},& 1\leqslant j\leqslant l-1,\\
0,& j=l.\\
\end{array}\right.
$$
Hence we have $x_j=u^{j-1}\cdot x_1$ for all
$2\leqslant j\leqslant l$. Such a basis is called a
{\it standard basis} of $M(l, i)$.
\end{convention}

We now list some easy facts.

\begin{lemma}
\label{xxlem1.5}
The following hold.
\begin{enumerate}
\item[(1)]
${\rm soc}(M(l,i))=kx_l\cong S_{i+l-1}$ and
$M(l,i)/{\rm rad}(M(l,i))\cong S_i$.
\item[(2)]
$M(l, i)$ is indecomposable and uniserial.
\item[(3)]
If $1\leqslant l'\leqslant p$ and $i'\in\mathbb{Z}_p$, then
$M(l, i)\cong M(l', i')$ if and only if $l'=l$ and $i'=i$.
\end{enumerate}
\end{lemma}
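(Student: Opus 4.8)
The plan is to read everything off the standard basis $\{x_1,\ldots,x_l\}$ of $M(l,i)$ from Convention \ref{xxcon1.4}, using the two facts recorded just before the lemma: $\rad^s(M)=u^s\cdot M$ for every $U$-module $M$ and every $s\geqslant 1$, and the explicit action $u\cdot x_j=x_{j+1}$ for $1\leqslant j\leqslant l-1$, $u\cdot x_l=0$.

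For part (1), I would first compute $\rad(M(l,i))=u\cdot M(l,i)=\mathrm{span}_{\Bbbk}\{x_2,\ldots,x_l\}$, so that $M(l,i)/\rad(M(l,i))$ is one-dimensional, spanned by the image of $x_1$, on which $w$ acts by the scalar $i$; by Lemma \ref{xxlem1.2} this module is $S_i$. Dually, the socle of $M(l,i)$ is its largest semisimple submodule, hence $\{m\in M(l,i)\mid J(U)\cdot m=0\}=\{m\mid u\cdot m=0\}=\Bbbk x_l$, and $w$ acts on $x_l$ by $i+l-1$, so $\soc(M(l,i))\cong S_{i+l-1}$.

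For part (2), iterating the first fact gives $\rad^s(M(l,i))=u^s\cdot M(l,i)=\mathrm{span}_{\Bbbk}\{x_{s+1},\ldots,x_l\}$ for $0\leqslant s\leqslant l$, a strictly decreasing chain of submodules with one-dimensional, hence simple, consecutive quotients. To upgrade this to uniseriality I would show every nonzero submodule $N\subseteq M(l,i)$ coincides with some $\rad^{t-1}(M(l,i))$: let $t$ be the least index such that $x_t$ occurs with nonzero coefficient in some element of $N$, choose such an element and rescale its $x_t$-coefficient to $1$, then apply $u$ repeatedly and subtract off the already-produced higher basis vectors to get $x_t,x_{t+1},\ldots,x_l\in N$; minimality of $t$ gives $N\subseteq\mathrm{span}_{\Bbbk}\{x_t,\ldots,x_l\}$, so $N=\rad^{t-1}(M(l,i))$. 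Hence the submodules of $M(l,i)$ are totally ordered by inclusion, so $M(l,i)$ is uniserial and in particular indecomposable; indecomposability alone also follows at once from the simplicity of the socle established in (1).

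For part (3), if $M(l,i)\cong M(l',i')$, comparing $\Bbbk$-dimensions forces $l=l'$, and then part (1) yields $S_i\cong M(l,i)/\rad(M(l,i))\cong M(l',i')/\rad(M(l',i'))\cong S_{i'}$, so $i=i'$ by the non-isomorphism of the $S_i$ in Lemma \ref{xxlem1.2}; the converse is trivial. I expect the only step requiring genuine (though still short) argument is the verification in (2) that an arbitrary submodule is one of the radical powers; the remaining assertions are immediate from the matrices in Lemma \ref{xxlem1.3}.
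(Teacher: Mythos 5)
Your proof is correct and follows essentially the route the paper intends: the paper simply defers to the analogous statement for Taft algebras (\cite[Lemma 2.3]{CVZ}), whose argument is exactly this direct verification with the standard basis, the identity $\rad^s(M)=u^s\cdot M$, and the identification of $\soc$ and head. The only step needing real care — showing every nonzero submodule equals some $\rad^{t-1}(M(l,i))$ — is handled correctly by your downward induction, so nothing is missing.
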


\begin{proof}
It is similar to the proof of \cite[Lemma 2.3]{CVZ}.
\end{proof}

The next corollary is similar to \cite[Corollary 2.4]{CVZ}.

\begin{corollary}
\label{xxcor1.6}
The following hold.
\begin{enumerate}
\item[(1)]
$M(l,i)$ is simple if and only if $l=1$. In this case,
$M(1, i)\cong S_i$.
\item[(2)]
$M(l,i)$ is projective {\rm{(}}respectively, injective{\rm{)}}
if and only if $l=p$.
\item[(3)]
$M(p,i)\cong P(S_i)\cong I(S_{i-1})$.
\end{enumerate}
\end{corollary}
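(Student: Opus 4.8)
The plan is to prove Corollary \ref{xxcor1.6} directly from the structure of $U$ recorded above, treating the three parts in order and using Lemma \ref{xxlem1.5} and the identification $J(U)=(u)=uU$ throughout.

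\textbf{Part (1).} First I would observe that $M(l,i)$ is simple if and only if it has length one (since every simple $U$-module is $1$-dimensional, as in Lemma \ref{xxlem1.2}), and $\dim_{\Bbbk}M(l,i)=l$ by Lemma \ref{xxlem1.3}, so $M(l,i)$ is simple exactly when $l=1$. When $l=1$ the standard basis is a single vector $x_1$ with $u\cdot x_1=0$ and $w\cdot x_1=ix_1$, which is precisely the defining action of $S_i$ in Lemma \ref{xxlem1.2}; hence $M(1,i)\cong S_i$.

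\textbf{Part (2).} For the projective/injective statement, I would argue as follows. Since $\dim_{\Bbbk}U=p^2$ and there are $p$ simple modules $S_i$, each $1$-dimensional, and $U$ is a Frobenius (in fact symmetric, being a finite-dimensional Hopf algebra) algebra, the principal indecomposable modules $P(S_i)$ all have dimension $p$; indeed $U=\bigoplus_{i\in\mathbb{Z}_p}P(S_i)$ as a left module gives $\sum_i \dim P(S_i)=p^2$, and one sees each summand has dimension $p$ because $U$ acts on itself with $w$ semisimple (Lemma \ref{xxlem1.1}) so that the block $Ue_i$ attached to the idempotent projecting onto the $i$-eigenspace chain $\{i,i+1,\dots\}$ has the $p$ weights $i,i+1,\dots,i+p-1=i-1$ each once. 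Thus each projective indecomposable has Loewy length $p$ (the Loewy length of $U$ computed above) and dimension $p$, so it is uniserial. Concretely, $U/J(U)^{?}$-type reasoning shows $P(S_i)$ has composition factors $S_i,S_{i+1},\dots,S_{i-1}$ from top to bottom, which is exactly $M(p,i)$ by Convention \ref{xxcon1.4}. Conversely, if $l<p$ then $M(l,i)$ has dimension $l<p=\dim P(S_i)$, so it cannot be projective (a projective module is a direct sum of $P(S_j)$'s, each of dimension $p$); dually, since $U$ is self-injective with the same principal indecomposables serving as the indecomposable injectives, $M(l,i)$ is injective iff $l=p$. So $M(l,i)$ is projective (equivalently injective) iff $l=p$.

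\textbf{Part (3).} Finally, for $l=p$ I would identify $M(p,i)$ with $P(S_i)$ via Lemma \ref{xxlem1.5}(1): $M(p,i)$ is indecomposable (Lemma \ref{xxlem1.5}(2)) and projective by part (2), with $M(p,i)/\mathrm{rad}\,M(p,i)\cong S_i$, so $M(p,i)\cong P(S_i)$. For the injective hull statement, $M(p,i)$ is indecomposable injective with socle $\mathrm{soc}(M(p,i))=\Bbbk x_p\cong S_{i+p-1}=S_{i-1}$ by Lemma \ref{xxlem1.5}(1), so $M(p,i)\cong I(S_{i-1})$. This finishes the corollary.

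The steps themselves are short; the one point that needs a little care — the main (minor) obstacle — is pinning down that $\dim_{\Bbbk}P(S_i)=p$ for every $i$, i.e.\ that the block decomposition of the regular representation distributes the weight spaces evenly. I would handle this via the weight-space grading of Lemma \ref{xxlem1.1} applied to ${}_UU$: multiplication by $u$ shifts weights by $+1$ and $u^p=0$, so the $U$-submodule generated by a weight-$i$ vector of $U$ is uniserial of length exactly $p$ with weights $i,i+1,\dots,i-1$, and by Frobenius symmetry this submodule is all of $P(S_i)$. Everything else is bookkeeping against Lemma \ref{xxlem1.5} and Convention \ref{xxcon1.4}, mirroring the Taft-algebra case treated in \cite[Corollary 2.4]{CVZ}.
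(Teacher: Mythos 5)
Your proof is correct and follows essentially the same route as the paper's: part (1) from Lemma \ref{xxlem1.5}, and parts (2,3) by decomposing the regular representation, via the semisimple action of $w$, into $p$ uniserial blocks $Ue_i$ each of dimension $p$ and isomorphic to $M(p,i)$, then invoking self-injectivity of the finite-dimensional Hopf algebra $U$. Two minor slips worth fixing: finite-dimensional Hopf algebras are Frobenius but not in general symmetric (this $U$ is not unimodular, hence not symmetric — harmless, since only self-injectivity is used); and the cyclic submodule generated by a weight vector $v$ of ${}_UU$ has length exactly $p$ only when $v\notin uU=J(U)$, equivalently $u^{p-1}v\neq 0$ (which for $v=e_i$ follows from the basis $\{u^iw^j\}$) — for instance $u^{p-1}e_i$ is a weight vector generating a one-dimensional submodule, so the claim as stated for an arbitrary weight vector is too strong.
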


\begin{proof}
(1) It follows from Lemma \ref{xxlem1.5}(1).

(2,3) Note that every finite dimensional Hopf algebra is
self-injective as an algebra. If $l=p$, then it follows
from \cite[Lemma 3.5]{Ch3} that $M(p,i)$ is projective and
injective.

Define $e_0=1-w^{p-1}$ (for $i=0$) and
$e_i=\frac{1}{p-1}\sum_{j=1}^{p-1}i^{-j}w^j$ for
$i\in\mathbb{Z}_p^{\times}$. It is easy to check that
$e_0^2=e_0$, $e_0e_i=0$ and $e_ie_l=\delta_{il}e_l$ for
all $i,l\in\mathbb{Z}_p^{\times}$. That is,
$\{e_0, e_1, \cdots, e_{p-1}\}$ is a set of orthogonal
idempotents of $U$. Since $\mathbb{Z}_p^{\times}$ is a
cyclic group of order $p-1$,
$\sum_{i=1}^{p-1}i^{-j}=\delta_{j,p-1}(p-1)$ for any
$1\<j\<p-1$. Then one can check that
$\sum_{i=1}^{p-1}e_i=w^{p-1}$, and so $\sum_{i=0}^{p-1}e_i=1$. We also have
$we_i=ie_i$ and $u^{p-1}e_i\neq 0$, where the latter follows
from the fact that $\{u^i w^j\mid 0\< i,j \< p-1\}$ is a
$\Bbbk$-linear basis of $U$. Therefore,
$$Ue_i={\rm span}\{e_i, ue_i, \cdots, u^{p-1}e_i\}\cong M(p,i).$$
Thus, we have the decomposition of the regular module $U$ as follows
$$U=\bigoplus\limits_{i=0}^{p-1}Ue_i\cong
\bigoplus\limits_{i=0}^{p-1}M(p, i).$$
Hence $M(p,i)\cong P(S_i)$, and $M(p, 0)$, $M(p, 1)$, $\cdots$,
$M(p, p-1)$ are all non-isomorphic indecomposable projective
$U$-modules. So parts (2,3) follow from Lemma \ref{xxlem1.5}.
\end{proof}

Since the indecomposable projective $U$-modules are uniserial,
any indecomposable $U$-module is uniserial and is isomorphic to
a quotient of an indecomposable projective module. Therefore,
we have the following proposition (which is well-known).

\begin{proposition}
\label{xxpro1.7}
Up to isomorphism, there are $p^2$ finite dimensional
indecomposable $U$-modules as follows
$$\{M(l, i)|1\leqslant l\leqslant p, i\in\mathbb{Z}_p\}.$$
\end{proposition}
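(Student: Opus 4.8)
The plan is to build on the structural facts already established: $U$ has Jacobson radical $J(U)=(u)=uU=Uu$ with $J(U)^p=0$ and $J(U)^{p-1}\neq 0$, so the Loewy length of $U$ is $p$; there are exactly $p$ simple modules $S_0,\dots,S_{p-1}$, each one-dimensional; and by Corollary \ref{xxcor1.6} the indecomposable projective modules are precisely $M(p,0),\dots,M(p,p-1)$, with $M(p,i)\cong P(S_i)$ uniserial of length $p$. Since $U$ is a finite dimensional self-injective algebra, every indecomposable module is a quotient of an indecomposable projective module (indeed embeds in and is a quotient of projective-injectives), so the classification reduces to understanding quotients of the $M(p,i)$.

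First I would record that each $M(p,i)$ is uniserial: its unique composition series has factors $S_i, S_{i+1},\dots,S_{i+p-1}$ read from top to bottom, which follows from the explicit action in Lemma \ref{xxlem1.3} together with $\rad^s(M(p,i))=u^s\cdot M(p,i)$ and Lemma \ref{xxlem1.5}(1). Because a uniserial module has a unique submodule of each length $0,1,\dots,p$, the quotients of $M(p,i)$ are exactly $M(p,i)/\rad^s(M(p,i))$ for $s=0,1,\dots,p$, and one checks directly from the standard basis that $M(p,i)/\rad^{p-l}(M(p,i))\cong M(l,i)$ for $1\le l\le p$. Conversely, every indecomposable $U$-module $N$, being a quotient of some $P(S_i)=M(p,i)$ by the projective-cover property, is of this form, hence isomorphic to some $M(l,i)$. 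This gives the list; Lemma \ref{xxlem1.5}(3) then shows the $p^2$ modules $M(l,i)$ with $1\le l\le p$, $i\in\mathbb{Z}_p$ are pairwise non-isomorphic, so there are exactly $p^2$ of them.

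The one point that needs slight care — and is the closest thing to an obstacle — is justifying that every indecomposable $U$-module really is a quotient of an indecomposable projective. The clean argument: for any module $N$ with top $N/\rad N\cong\bigoplus S_{i_k}$, the projective cover is $P(N)=\bigoplus P(S_{i_k})=\bigoplus M(p,i_k)$ surjecting onto $N$; if $N$ is indecomposable then $N/\rad N$ must itself be indecomposable semisimple, i.e. simple (this uses that $U$ is a basic algebra, or more directly that a nonzero projective cover decomposition would split off a summand), so the top is a single $S_i$ and $N$ is a quotient of $M(p,i)$. Since the $M(p,i)$ are uniserial, the quotients are totally ordered and exhausted by the $M(l,i)$, completing the proof. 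Everything invoked here is contained in or immediately follows from Lemma \ref{xxlem1.2}, Corollary \ref{xxcor1.6}, and the uniseriality already noted in the text, so the proof is short.
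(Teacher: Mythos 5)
Your overall route is the same as the paper's: reduce to quotients of the indecomposable projectives $M(p,i)$, use their uniseriality to identify those quotients with the modules $M(l,i)$, and quote Lemma \ref{xxlem1.5}(3) for pairwise non-isomorphism. The paper simply asserts the reduction step (``any indecomposable $U$-module is uniserial and is isomorphic to a quotient of an indecomposable projective module'') as well known; it is the structure theorem for Nakayama (serial) algebras.

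The gap is in the justification you offer for exactly that step, which you yourself flag as the delicate point. The claim that an indecomposable module $N$ must have simple top $N/\rad N$ is false for a general finite-dimensional algebra, and neither of your parenthetical reasons repairs it: basicness is irrelevant, and the projective cover of an indecomposable module need not be indecomposable, so there is nothing to ``split off.'' Concretely, over the basic local algebra $\Lambda=\Bbbk[x,y]/(x,y)^2$, the three-dimensional module with basis $e_1,e_2,e_3$ and $x\cdot e_1=e_3$, $y\cdot e_2=e_3$ (all other actions zero) has simple socle $\Bbbk e_3$, hence is indecomposable, yet its top is two-dimensional and its projective cover is $\Lambda\oplus\Lambda$. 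What makes the step valid for $U$ is precisely that $U$ is a Nakayama algebra: the indecomposable projectives are uniserial by Corollary \ref{xxcor1.6}, and since $U$ is self-injective these coincide with the indecomposable injectives, so the injectives are uniserial too. The theorem that every indecomposable module over such an algebra is uniserial (hence has simple top and is a quotient of an indecomposable projective) genuinely uses this two-sided information --- the standard proof shows that a uniserial submodule whose Loewy length equals that of the ambient module is a direct summand --- and is not a formal consequence of the projective-cover formalism. Either cite the serial-algebra structure theorem, as the paper implicitly does, or supply that splitting argument; with that repaired, the rest of your proof is fine.
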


It is easy to see that $\Hom_U(M(l,i),M(l,i))=\Bbbk$
for all modules in Proposition \ref{xxpro1.7}.

\section{Tensor decomposition and the Green ring of $U$}
\label{xxsec2}

Since $U$ is cocommutative, the tensor category $U$-mod is
symmetric. By Proposition \ref{xxpro1.7}, there are $p^2$
non-isomorphic indecomposable modules over $U$, namely,
$$\{M(l,r)|1\leqslant l\leqslant p, r\in\mathbb{Z}_p\}.$$

For any $U$-module $M$ and $r\in\mathbb{Z}_p$, recall from
Lemma \ref{xxlem1.1} that
$$M[r]=\{m\in M|w\cdot m=rm\}.$$
Then $M[r]$ is a subspace of $M$. Next we list
some easy facts in the following lemma.

\begin{lemma}
\label{xxlem2.1}
Let $M$ be an $U$-module. Then
\begin{enumerate}
\item[(1)]
If $M$ is indecomposable, then ${\rm dim}(M[r])\leqslant 1$
for every $r\in\mathbb{Z}_p$.
\item[(2)]
If $M=M_1\oplus M_2$ for some submodules $M_1$ and $M_2$,
then $M[r]=M_1[r]\oplus M_2[r]$ for every $r\in\mathbb{Z}_p$.
\item[(3)]
When $M$ is decomposed into a direct sum of indecomposable
submodules, the number of summands is at least
${\rm max}\{{\rm dim}(M[r])|r\in\mathbb{Z}_p\}$.
\end{enumerate}
\end{lemma}

\begin{lemma}
\label{xxlem2.2}
Let $1\leqslant l\leqslant p$ and $r, r'\in\mathbb{Z}_p$. Then
$$S_{r'}\ot M(l,r)\cong M(l,r)\ot S_{r'}\cong M(l, r+r')$$
as $U$-modules. In particular, $S_r\otimes S_{r'}\cong S_{r+r'}$
and $M(l, r)\cong S_r\ot M(l,0)$.
\end{lemma}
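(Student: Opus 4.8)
\textbf{Proof proposal for Lemma \ref{xxlem2.2}.}

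The plan is to exploit the fact, recorded in Lemma \ref{xxlem1.2}, that each simple module $S_{r'}$ is one-dimensional with $u$ acting as zero and $w$ acting as the scalar $r'$. First I would fix a standard basis $\{x_1,\dots,x_l\}$ of $M(l,r)$ as in Convention \ref{xxcon1.4}, and a basis vector $e$ of $S_{r'}$. On the tensor product $M(l,r)\ot S_{r'}$ the coalgebra structure of $U$ gives
$$u\cdot(x_j\ot e)=(u\cdot x_j)\ot e+x_j\ot(u\cdot e)=(u\cdot x_j)\ot e,$$
$$w\cdot(x_j\ot e)=(w\cdot x_j)\ot e+x_j\ot(w\cdot e)=(i+j-1+r')(x_j\ot e)$$
where I write $i=r$ for the weight offset. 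Thus the vectors $y_j:=x_j\ot e$ satisfy exactly the defining relations of a standard basis of $M(l,r+r')$: $u\cdot y_j=y_{j+1}$ for $1\le j\le l-1$, $u\cdot y_l=0$, and $w\cdot y_j=(r+r'+j-1)y_j$. Hence the $\Bbbk$-linear isomorphism $y_j\mapsto x_j$ into $M(l,r+r')$ (using its own standard basis) is $U$-linear, giving $M(l,r)\ot S_{r'}\cong M(l,r+r')$.

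For the other order $S_{r'}\ot M(l,r)$ the computation is symmetric — since $u$ and $w$ are primitive, the same formulas hold with the two tensor factors swapped — so $S_{r'}\ot M(l,r)\cong M(l,r+r')$ as well; alternatively one can simply invoke cocommutativity of $U$, which makes $U$-mod symmetric (as noted at the start of this section), so the flip map $M\ot N\to N\ot M$ is always a $U$-module isomorphism. The special cases then follow immediately: taking $l=1$ gives $S_r\ot S_{r'}=M(1,r)\ot M(1,r')\cong M(1,r+r')=S_{r+r'}$, and taking $r'=0$ in the reversed isomorphism gives $S_r\ot M(l,0)\cong M(l,r)$.

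I do not expect any genuine obstacle here; the only thing to be careful about is bookkeeping of the weight indices — that $w\cdot x_j=(r+j-1)x_j$ in $M(l,r)$ so that adding the scalar $r'$ from $S_{r'}$ shifts the whole weight string uniformly to that of $M(l,r+r')$ — and checking that the $u$-action on the tensor product is unchanged because $u\cdot e=0$. Both are immediate from Lemma \ref{xxlem1.2} and the primitivity of $u,w$, so the proof is a short direct verification.
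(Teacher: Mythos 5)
Your proof is correct and is precisely the ``straightforward verification'' the paper has in mind: the paper states Lemma \ref{xxlem2.2} without proof as an easy fact, and the intended argument is exactly your computation that, since $u$ and $w$ are primitive and $u\cdot e=0$, $w\cdot e=r'e$, the vectors $x_j\ot e$ form a standard basis of $M(l,r+r')$, with the other tensor order handled by cocommutativity. One trivial slip: to deduce $S_r\ot M(l,0)\cong M(l,r)$ you should substitute $r\mapsto 0$ and $r'\mapsto r$ in $S_{r'}\ot M(l,r)\cong M(l,r+r')$, not set $r'=0$; the conclusion is of course still an immediate instance of the general isomorphism.
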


Throughout the rest of the section, let $2\leqslant l\<m\leqslant p$,
and let
$$M=M(l, 0)\ot M(m, 0).$$
Let $\{x_1, x_2, \cdots, x_l\}$ and $\{y_1, y_2, \cdots, y_m\}$
be the standard bases of $M(l, 0)$ and $M(m, 0)$, respectively,
as stated in Convention \ref{xxcon1.4}. Then
$$\{x_i\ot y_j|1\leqslant i\leqslant l, 1\leqslant j\leqslant m\}$$
is a $\Bbbk$-basis of $M$. For any $2\leqslant s\leqslant l+m$, let
$$M(s)={\rm span}\{x_i\ot y_j|i+j=s\}.$$
Then we have $M=\bigoplus\limits_{s=2}^{l+m}M(s)$ as $\Bbbk$-spaces.

\begin{lemma}
\label{xxlem2.3}
Retain the above notation.
\begin{enumerate}
\item[(1)]
$u\cdot M(s)\subseteq M(s+1)$ for all $2\leqslant s\leqslant l+m$,
where $M(l+m+1)=0$.
\item[(2)]
$M(s)\subseteq M[s-2]$ for all $2\leqslant s\leqslant l+m$.
\item[(3)]
${\rm dim}(M(s))=\left\{\begin{array}{ll}
s-1,& if\ 2\leqslant s\leqslant l+1\\
l,& if\ l+1<s<m+1\\
l+m+1-s,& if\ m+1\leqslant s\leqslant l+m\\
\end{array}\right.$
\end{enumerate}
\end{lemma}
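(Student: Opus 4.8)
The statement to prove is Lemma~2.3, which records three basic facts about the vector-space grading $M=\bigoplus_{s=2}^{l+m}M(s)$ on the tensor product $M=M(l,0)\ot M(m,0)$.

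\medskip

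The plan is to work directly from the explicit action of $u$ and $w$ on the standard bases $\{x_i\}$ and $\{y_j\}$ given in Convention~\ref{xxcon1.4}, together with the coproduct formulas $\Delta(u)=u\ot 1+1\ot u$ and $\Delta(w)=w\ot 1+1\ot w$ from Definition~\ref{xxdef0.1}. For part~(1), I would compute $u\cdot(x_i\ot y_j)=(u\cdot x_i)\ot y_j+x_i\ot(u\cdot y_j)$. By the convention, $u\cdot x_i$ is either $x_{i+1}$ or $0$, and $u\cdot y_j$ is either $y_{j+1}$ or $0$; in every case the resulting basis vectors have first-plus-second index equal to $(i+j)+1=s+1$, so $u\cdot M(s)\subseteq M(s+1)$, with the boundary convention $M(l+m+1)=0$ covering the case $s=l+m$ where $u$ kills everything. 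For part~(2), I would similarly compute $w\cdot(x_i\ot y_j)=(w\cdot x_i)\ot y_j+x_i\ot(w\cdot y_j)=(i-1)(x_i\ot y_j)+(j-1)(x_i\ot y_j)=(i+j-2)(x_i\ot y_j)$ since the standard basis of $M(l,0)$ has $w\cdot x_i=(i-1)x_i$ (here $r=0$). Thus every spanning vector of $M(s)$ lies in the eigenspace $M[s-2]$, giving $M(s)\subseteq M[s-2]$.

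\medskip

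For part~(3), the computation is purely combinatorial: $\dim M(s)$ is the number of pairs $(i,j)$ with $1\le i\le l$, $1\le j\le m$, and $i+j=s$. I would count lattice points on the line $i+j=s$ inside the rectangle $[1,l]\times[1,m]$, splitting into the three regimes according to whether the line meets the rectangle in a growing segment (near the corner $(1,1)$, where $2\le s\le l+1$ and the count is $s-1$), a segment of maximal length $l$ (the ``middle'' range $l+1<s<m+1$, where the constraint $i\le l$ is the only active one), or a shrinking segment (near the corner $(l,m)$, where $m+1\le s\le l+m$ and the count is $l+m+1-s$). The hypothesis $l\le m$ is exactly what guarantees these three ranges are arranged in this order and are each non-empty (or degenerate gracefully when $l=m$).

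\medskip

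I do not expect any real obstacle here: all three parts reduce to unwinding definitions, and the only mild care needed is bookkeeping at the boundary indices and the case $l=m$ in part~(3) (where the middle range is empty). One could also phrase (1) and (2) more slickly by noting that $M(s)$ is, up to the shift by $2$, the span of the weight-$(s-2)$ vectors and that $u$ raises the $w$-weight by one (Lemma~\ref{xxlem1.1}), but the hands-on computation on the tensor basis is shorter to write out and makes part~(3) immediate from the same setup.
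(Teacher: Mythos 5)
Your proof is correct and matches the paper's approach: the paper simply states that Lemma~\ref{le:} follows ``from a straightforward verification,'' and your direct computation on the tensor basis $\{x_i\otimes y_j\}$ using the primitivity of $u$ and $w$, together with the lattice-point count for part~(3), is exactly the verification being alluded to.
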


\begin{proof}
It follows from a straightforward verification.
\end{proof}

\begin{lemma}
\label{xxlem2.4}
The socle of $M$ has the following decomposition
$${\rm soc}(M)=\bigoplus\limits_{2\leqslant s\leqslant l+m}
{\rm soc}(M)\cap M(s).$$
\end{lemma}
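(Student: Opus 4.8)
The plan is to reduce the claimed decomposition of $\soc(M)$ to the weight-space decomposition $M=\bigoplus_{s}M(s)$ from Lemma \ref{xxlem2.3}, and the key point is that each $M(s)$ is $w$-stable and all $M(s)$ sit in distinct $w$-eigenspaces. First I would recall that $\soc(M)=\{m\in M\mid u\cdot m=0\}$, since the radical of $U$ is $(u)=uU$ and hence $\soc(M)$, being the annihilator of $\rad(U)=J(U)$, equals the set of elements killed by $u$. Thus $\soc(M)=\ker(u\colon M\to M)$. By Lemma \ref{xxlem2.3}(2) each $M(s)$ lies in the single weight space $M[s-2]$, and since the $M[r]$ are linearly independent (Lemma \ref{xxlem1.1}), the sum $M=\bigoplus_s M(s)$ is exactly the decomposition of $M$ into (a refinement of) $w$-eigenspaces.

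Next I would use that $u$ respects the grading by $s$: by Lemma \ref{xxlem2.3}(1), $u\cdot M(s)\subseteq M(s+1)$. Take any $m\in\soc(M)$ and write $m=\sum_{s=2}^{l+m} m_s$ with $m_s\in M(s)$. Then $0=u\cdot m=\sum_s u\cdot m_s$, and since $u\cdot m_s\in M(s+1)$ with the summands lying in distinct weight spaces $M[s-1]$, we get $u\cdot m_s=0$ for every $s$. Hence each $m_s\in\soc(M)\cap M(s)$, which shows $\soc(M)\subseteq\sum_s\big(\soc(M)\cap M(s)\big)$; the reverse inclusion is obvious, and the sum is direct because it is a sub-sum of $\bigoplus_s M(s)$. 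This gives the asserted decomposition.

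There is essentially no obstacle here: the lemma is a formal consequence of the fact that $u$ is a weight-raising operator (degree $+1$ for the grading by $s$) together with the identification $\soc(M)=\ker(u|_M)$. The only thing to be slightly careful about is the identification $\soc(M)=\ker(u|_M)$, which I would justify once and for all from $J(U)=(u)=uU=Uu$ and $u^p=0$, so that $\rad^{p-1}(M)=u^{p-1}M$ and more generally $\soc(M)=\{m : J(U)m=0\}=\{m : um=0\}$. Everything else is bookkeeping with the decomposition $M=\bigoplus_s M(s)$ already established in Lemma \ref{xxlem2.3}.
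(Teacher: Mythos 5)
Your proposal is correct and is essentially the paper's own (one-line) proof: identify $\soc(M)=\{z\in M\mid u\cdot z=0\}$ and use $u\cdot M(s)\subseteq M(s+1)$ from Lemma \ref{xxlem2.3}(1) together with the directness of $M=\bigoplus_s M(s)$. One small caveat: your justification that the components $u\cdot m_s$ lie in ``distinct weight spaces $M[s-1]$'' can fail when $l+m-2\geqslant p$ (the residues $s-2$ repeat mod $p$), but this is harmless since the decomposition $M=\bigoplus_s M(s)$ is already direct as $\Bbbk$-spaces by the basis partition, which is all the argument needs.
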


\begin{proof}
It follows from Lemma \ref{xxlem2.3}(1) since ${\rm soc}(M)=\{z\in M|u\cdot z=0\}$.
\end{proof}

\begin{lemma}
\label{xxlem2.5}
The following statements hold.
\begin{enumerate}
\item[(1)]
If $2\leqslant s\leqslant m$, then ${\rm soc}(M)\cap M(s)=0$.
\item[(2)]
If $m+1\leqslant s\leqslant l+m$, then
${\rm dim}({\rm soc}(M)\cap M(s))=1$.
\item[(3)]
${\rm dim}({\rm soc}(M))=l$.
\end{enumerate}
\end{lemma}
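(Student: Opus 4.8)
The plan is to compute the socle of $M = M(l,0)\otimes M(m,0)$ directly from the weight-space decomposition $M = \bigoplus_{s=2}^{l+m} M(s)$ established in Lemma \ref{xxlem2.3}, using the fact that $\operatorname{soc}(M) = \{z\in M : u\cdot z = 0\}$ and, by Lemma \ref{xxlem2.4}, that the socle respects this decomposition. So it suffices to analyze, for each $s$, the kernel of the restricted map $u|_{M(s)}\colon M(s)\to M(s+1)$. First I would write down the action of $u$ on the basis vector $x_i\otimes y_j$ explicitly using $\Delta(u) = u\otimes 1 + 1\otimes u$, namely $u\cdot(x_i\otimes y_j) = (u\cdot x_i)\otimes y_j + x_i\otimes(u\cdot y_j)$, which by Convention \ref{xxcon1.4} equals $x_{i+1}\otimes y_j + x_i\otimes y_{j+1}$ (with the convention that $x_{l+1}=0$ and $y_{m+1}=0$). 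This makes $u|_{M(s)}$ a very concrete "two-diagonal" matrix between the spanning sets of $M(s)$ and $M(s+1)$.

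For part (1), when $2\le s\le m$, Lemma \ref{xxlem2.3}(3) gives $\dim M(s) = \min(s-1, l) \le \dim M(s+1)$ (one checks $\dim M(s+1) \ge \dim M(s)$ in the range $s\le m$, since $M(s+1)$ has dimension $s$ if $s+1 \le l+1$, dimension $l$ if $l+1 < s+1 \le m+1$, and these values are nondecreasing up to $s = m$). I would then argue that $u|_{M(s)}$ is injective: if $\sum_{i+j=s} a_{ij}\, x_i\otimes y_j$ is killed by $u$, comparing coefficients of $x_i\otimes y_j$ in the image (each such basis vector of $M(s+1)$ receives contributions from exactly $x_{i-1}\otimes y_j$ and $x_i\otimes y_{j-1}$) yields a triangular system forcing all $a_{ij}=0$; concretely, reading the coefficient equations from the "corner" of the strip where $i=1$ or $j=m$ propagates vanishing across the whole strip. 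This is the step that needs the most care, since one must be sure no coefficient equation is "missing" — precisely what the dimension inequality $\dim M(s) \le \dim M(s+1)$ guarantees, because the map $u|_{M(s)}$ has enough target coordinates to pin down every source coordinate.

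For part (2), when $m+1\le s\le l+m$, the roles reverse: now $\dim M(s+1) = l+m-s < l+m+1-s = \dim M(s)$ (with $\dim M(l+m+1)=0$ when $s=l+m$), so the rank-nullity count gives $\dim\ker(u|_{M(s)}) \ge 1$; to get equality I would again exploit the two-diagonal structure to show $u|_{M(s)}$ is surjective onto $M(s+1)$, equivalently has rank exactly $\dim M(s+1) = l+m-s$, whence the kernel is exactly $1$-dimensional. Surjectivity follows by the same triangular bookkeeping run in the opposite direction. Part (3) is then immediate: by Lemmas \ref{xxlem2.4} and the two cases above, $\dim\operatorname{soc}(M) = \sum_{s=m+1}^{l+m} 1 = l$. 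As a sanity check this matches the expectation that $M$, being a tensor product of two modules each with simple socle and top, decomposes into exactly $l$ indecomposable summands, consistent with Lemma \ref{xxlem2.1}(3) and with the statement of Theorem \ref{xxthm0.2}. The only real obstacle is organizing the linear-algebra argument in parts (1) and (2) cleanly; I would phrase it once as a general statement about kernels of such bidiagonal strip maps and apply it to both cases rather than repeating the computation.
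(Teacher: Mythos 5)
Your proposal is correct and follows essentially the same route as the paper: both decompose $M$ into the weight strips $M(s)$ via Lemma \ref{xxlem2.4}, write out $u\cdot(x_i\otimes y_j)=x_{i+1}\otimes y_j+x_i\otimes y_{j+1}$, and analyze the resulting bidiagonal maps $M(s)\to M(s+1)$. The only cosmetic difference is that in part (2) you invoke rank--nullity plus surjectivity where the paper directly solves the kernel equations $\alpha_{i-1}+\alpha_i=0$; this is the same linear algebra.
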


\begin{proof}
(1) Let $2\leqslant s\leqslant l$ and
let $z\in M(s)$. Then $z=\sum\limits_{i=1}^{s-1}\a_ix_i\ot y_{s-i}$
for some $\a_i\in \Bbbk$. A straightforward computation shows that
$$u\cdot z=\a_1x_1\ot y_{s}
+\sum\limits_{2\<i\<s-1}(\a_{i-1}+\a_i)x_{i}\ot y_{s+1-i}
+\a_{s-1}x_{s}\ot y_1.$$
Now by an easy linear algebra argument, $u\cdot z=0$ if and
only if $z=0$. Thus, $z\in{\rm soc}(M)$ if and only if $z=0$.
This shows that ${\rm soc}(M)\cap M(s)=0$ for all
$2\leqslant s\leqslant l$.

Now let $l+1\leqslant s\leqslant m$ and let $z\in M(s)$.
In this case, $l<m$ and
$z=\sum\limits_{i=1}^l\a_ix_i\ot y_{s-i}$ for some
$\a_i\in \Bbbk$. Hence we have
$u\cdot z=\a_1x_1\ot y_{s}
+\sum\limits_{i=2}^{l}(\a_{i-1}+\a_i)x_{i}\ot y_{s+1-i}$.
Thus, by a similar argument as above, one can show that
$z\in{\rm soc}(M)$ if and only if  $z=0$.
Hence ${\rm soc}(M)\cap M(s)=0$ for all
$l+1\leqslant s\leqslant m$.

(2) Obviously, $u\cdot M(l+m)=0$. Hence
$M(l+m)\subseteq{\rm soc}(M)$,
and so $M(l+m)\cap{\rm soc}(M)=M(l+m)$ is one dimensional.
Now let $m+1\leqslant s< l+m$
and $z\in M(s)$. Then
$z=\sum\limits_{i=s-m}^{l}\a_ix_i\ot y_{s-i}$
for some $\a_i\in \Bbbk$. One can check that
$$u\cdot z=\sum\limits_{i=s+1-m}^{l}(\a_{i-1}+\a_i)x_{i}\ot y_{s+1-i}.$$
Thus, $z\in{\rm soc}(M)$ if and only if $u\cdot z=0$
if and only if $\a_{i-1}+\a_i=0$ for all
$s+1-m\leqslant i\leqslant l$. It follows that
${\rm dim}({\rm soc}(M)\cap M(s))=1$ in this case.

(3) It follows from (1), (2) and Lemma \ref{xxlem2.4}.
\end{proof}

\begin{corollary}
\label{xxcor2.6}
Retain the above notation.
\begin{enumerate}
\item[(1)]
For any $m+1\leqslant s\leqslant l+m$, let
$z_s=\sum\limits_{i=s-m}^{l}(-1)^ix_i\ot y_{s-i}$.
Then ${\rm soc}(M)\cap M(s)=\Bbbk z_s$.
\item[(2)]
${\rm soc}(M)={\rm span}\{z_s| m+1\leqslant s\leqslant l+m\}$.
\item[(3)]
${\rm soc}(M)\cong\bigoplus\limits_{s=m+1}^{l+m}
M(1, s-2)\cong\bigoplus\limits_{s=m+1}^{l+m}S_{s-2}$.
\end{enumerate}
\end{corollary}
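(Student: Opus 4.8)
The plan is to extract explicit generators for each graded piece $\soc(M)\cap M(s)$ using the socle computations already in place, and then identify the module structure of $\soc(M)$ via the weight decomposition from Lemma \ref{xxlem1.1}. For part (1), fix $s$ with $m+1\leqslant s\leqslant l+m$. By Lemma \ref{xxlem2.5}(2) the space $\soc(M)\cap M(s)$ is one-dimensional, so it suffices to exhibit a single nonzero element of it. From the proof of Lemma \ref{xxlem2.5}(2), a general element $z=\sum_{i=s-m}^{l}\a_i x_i\ot y_{s-i}$ of $M(s)$ lies in the socle if and only if $\a_{i-1}+\a_i=0$ for all $s+1-m\leqslant i\leqslant l$, i.e. the coefficients alternate in sign. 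Taking $\a_i=(-1)^i$ (valid because the recursion $\a_i=-\a_{i-1}$ has exactly this solution up to scalar) gives $z_s=\sum_{i=s-m}^{l}(-1)^i x_i\ot y_{s-i}$, which is manifestly nonzero; hence $\soc(M)\cap M(s)=\Bbbk z_s$. Part (2) is then immediate from Lemma \ref{xxlem2.4} together with Lemma \ref{xxlem2.5}(1), which kills the summands with $2\leqslant s\leqslant m$: $\soc(M)=\bigoplus_{s=m+1}^{l+m}(\soc(M)\cap M(s))=\operatorname{span}\{z_s\mid m+1\leqslant s\leqslant l+m\}$.

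For part (3) I would argue as follows. Each $z_s$ is a socle element, hence generates a simple submodule $\Bbbk z_s$. To identify which simple module it is, note $z_s\in M(s)\subseteq M[s-2]$ by Lemma \ref{xxlem2.3}(2), so $w\cdot z_s=(s-2)z_s$; comparing with Lemma \ref{xxlem1.2} (a simple $U$-module is $1$-dimensional with $w$ acting by a scalar in $\mathbb{Z}_p$), we get $\Bbbk z_s\cong S_{s-2}\cong M(1,s-2)$. Since the $z_s$ for distinct $s$ lie in distinct weight spaces and span $\soc(M)$, we obtain $\soc(M)=\bigoplus_{s=m+1}^{l+m}\Bbbk z_s\cong\bigoplus_{s=m+1}^{l+m}M(1,s-2)\cong\bigoplus_{s=m+1}^{l+m}S_{s-2}$ as $U$-modules.

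There is no real obstacle here: the corollary is essentially a bookkeeping consequence of Lemmas \ref{xxlem2.3}, \ref{xxlem2.4} and \ref{xxlem2.5}. The one point requiring a line of care is verifying that the alternating-sign vector $z_s$ indeed solves the recursion $\a_{i-1}+\a_i=0$ on the correct index range $s+1-m\leqslant i\leqslant l$ and is nonzero — but this is immediate since all its coefficients are $\pm 1$. The only other thing worth double-checking is the weight bookkeeping in part (3), namely that the shift in Lemma \ref{xxlem2.3}(2) lands $z_s$ in weight $s-2$ and that distinct $s$ give distinct weights modulo $p$; the latter holds because $s$ ranges over at most $l\leqslant p$ consecutive integers, so the map $s\mapsto s-2$ is injective on this range in $\mathbb{Z}_p$.
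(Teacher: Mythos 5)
Your proof is correct and follows exactly the route the paper intends: the paper's own proof of this corollary is just the remark that it ``follows from the proof of Lemma \ref{xxlem2.5}'', and your write-up is precisely the expansion of that remark (the alternating-sign solution of $\a_{i-1}+\a_i=0$ for part (1), Lemmas \ref{xxlem2.4} and \ref{xxlem2.5}(1) for part (2), and the weight computation $z_s\in M[s-2]$ from Lemma \ref{xxlem2.3}(2) for part (3)). All the bookkeeping checks out, including the degenerate case $s=l+m$ where the recursion range is empty and $M(l+m)$ is already one-dimensional.
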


\begin{proof}
It follows from the proof of Lemma \ref{xxlem2.5}.
\end{proof}

Define ${\rm head}(M)=M/u\cdot M$.

\begin{corollary}
\label{xxcor2.7}
Retain the above notation. Then
${\rm head}(M)\cong \bigoplus\limits_{i=0}^{l-1}M(1, i)
\cong \bigoplus\limits_{i=0}^{l-1}S_i$.
\end{corollary}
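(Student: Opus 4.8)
The plan is to compute $\mathrm{head}(M)=M/(u\cdot M)$ one graded piece at a time, mirroring what was done for $\mathrm{soc}(M)$ in Lemma~\ref{xxlem2.5} and Corollary~\ref{xxcor2.6}. First I would record that $u\cdot M=\mathrm{rad}(M)$ (since $\mathrm{rad}^s(M)=u^s\cdot M$) and that, by Lemma~\ref{xxlem2.3}(1), $u\cdot M=\bigoplus_{s}u\cdot M(s)$ with $u\cdot M(s)\subseteq M(s+1)$, so $u\cdot M$ is a graded subspace of $M$ whose component in $M(s)$ is exactly $u\cdot M(s-1)$ (with the convention $M(1)=M(l+m+1)=0$). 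Hence
$$\mathrm{head}(M)=\bigoplus_{s=2}^{l+m}M(s)\big/u\cdot M(s-1).$$
By Lemma~\ref{xxlem2.3}(2) we have $M(s)\subseteq M[s-2]$, so the $s$-th summand is a semisimple module on which $w$ acts by the scalar $s-2$, and therefore is a direct sum of $d_s$ copies of $S_{s-2}$, where $d_s=\dim M(s)-\dim\big(u\cdot M(s-1)\big)$.

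Next I would evaluate $d_s$ by rank--nullity applied to $u\colon M(s-1)\to M(s)$, whose kernel is $\{z\in M(s-1)\mid u\cdot z=0\}=\mathrm{soc}(M)\cap M(s-1)$. This gives
$$d_s=\dim M(s)-\dim M(s-1)+\dim\big(\mathrm{soc}(M)\cap M(s-1)\big),$$
and every term on the right is now known: $\dim M(s)$ is the piecewise expression in Lemma~\ref{xxlem2.3}(3), while $\dim(\mathrm{soc}(M)\cap M(s-1))$ equals $0$ for $s-1\le m$ and $1$ for $s-1\ge m+1$ by Lemma~\ref{xxlem2.5}(1),(2). A short bookkeeping computation then yields $d_s=1$ for $2\le s\le l+1$ and $d_s=0$ for $l+2\le s\le l+m$, so that $\mathrm{head}(M)\cong\bigoplus_{s=2}^{l+1}S_{s-2}=\bigoplus_{i=0}^{l-1}S_i$, which is $\bigoplus_{i=0}^{l-1}M(1,i)$ by Corollary~\ref{xxcor1.6}(1).

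The only point needing care is the case analysis for $d_s$: the breakpoints of the formula for $\dim M(s)$ (at $s=l+1$ and $s=m+1$) and of the socle count (at $s-1=m$) must be lined up, and the degenerate situations $m=l$ and $m=l+1$, where several of the intervals collapse, should be checked separately; none of this is deep. (As a consistency check, $\sum_s d_s=l=\dim\mathrm{soc}(M)$, matching $\dim\mathrm{rad}(M)=\dim M-\dim\mathrm{soc}(M)$.) An alternative, essentially dual, route avoids the case analysis: one has $\mathrm{head}(M)\cong\mathrm{soc}(M^{*})^{*}$, and a direct computation of the dual module gives $M(l,0)^{*}\cong M(l,1-l)$, whence $M^{*}\cong M(l,1-l)\otimes M(m,1-m)\cong S_{2-l-m}\otimes M$ by Lemma~\ref{xxlem2.2}; then Corollary~\ref{xxcor2.6}(3) gives $\mathrm{soc}(M^{*})\cong S_{2-l-m}\otimes\bigoplus_{s=m+1}^{l+m}S_{s-2}\cong\bigoplus_{i=0}^{l-1}S_{-i}$, and dualizing back gives the claim. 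This is cleaner conceptually but requires having the dual-module identities in hand.
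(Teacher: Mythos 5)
Your main argument is exactly the paper's own proof: both decompose $u\cdot M$ as $\bigoplus_s u\cdot M(s)$ with $u\cdot M(s)\subseteq M(s+1)$, compute $\dim\bigl(M(s)/u\cdot M(s-1)\bigr)$ by a dimension count drawing on Lemmas \ref{xxlem2.3} and \ref{xxlem2.5} (your rank--nullity step just makes the paper's ``dimension counting'' explicit), and identify each nonzero quotient with $S_{s-2}$ via the weight grading of Lemma \ref{xxlem2.3}(2). The dual route you sketch at the end is a correct alternative but is not needed; the primary argument is the one in the paper and is complete.
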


\begin{proof}
By Lemma \ref{xxlem2.3}(1), we have $u\cdot M(s)
\subseteq M(s+1)$, and hence
$$u\cdot M=\bigoplus\limits_{s=2}^{l+m-1}u\cdot M(s).$$
Now by Lemmas \ref{xxlem2.3} and \ref{xxlem2.5} and a
dimension counting, we have
$${\rm dim}(M(s+1)/u\cdot M(s))=\left\{\begin{array}{ll}
1,& \text{ if }\ 2\leqslant s\leqslant l,\\
0,& \text{ otherwise}.\\
\end{array}\right.$$
Hence $u\cdot M=(\bigoplus\limits_{s=2}^l
u\cdot M(s))\bigoplus(\bigoplus\limits_{s=l+2}^{l+m}M(s))$.
Thus, as modules over $U/J(U)\cong B$, we have
$$\begin{array}{rcl}
M/u\cdot M&\cong& M(2)\bigoplus(\bigoplus\limits_{s=2}^l
M(s+1)/u\cdot M(s))\\
&\cong&\bigoplus\limits_{s=2}^{l+1}
M(1, s-2)=\bigoplus\limits_{i=0}^{l-1}
M(1, i)\cong\bigoplus\limits_{i=0}^{l-1}S_i .\\
\end{array}$$
\end{proof}

If $m=p$, then $M$ is projective since $M(p,0)$ is.
Hence by Corollaries \ref{xxcor1.6} and \ref{xxcor2.7},
we have
$$\begin{array}{rcl}
M&\cong& P(M)\cong P(M/u\cdot M)\cong
\bigoplus\limits_{i=0}^{l-1}P(S_i)\cong
\bigoplus\limits_{i=0}^{l-1}M(p, i).\\
\end{array}$$
Thus, we have the following proposition.

\begin{proposition}
\label{xxpro2.8}
Let $2\leqslant l\leqslant p$ and $r, r'\in{\mathbb Z}_p$.
Then we have the $U$-module isomorphism
$$M(l,r)\ot M(p, r')\cong
\bigoplus\limits_{i=0}^{l-1}M(p, r+r'+i).$$
\end{proposition}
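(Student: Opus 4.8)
The plan is to reduce the statement to the case $r=r'=0$ and $m=p$, which is already carried out in the discussion immediately preceding the proposition. First I would invoke Lemma~\ref{xxlem2.2} to write $M(l,r)\ot M(p,r')\cong S_r\ot M(l,0)\ot M(p,0)\ot S_{r'}\cong S_{r+r'}\ot\big(M(l,0)\ot M(p,0)\big)$, using that $U$ is cocommutative so the tensor category is symmetric and the $S$'s can be moved past $M(l,0)$. This means it suffices to establish the isomorphism
$$M(l,0)\ot M(p,0)\cong\bigoplus_{i=0}^{l-1}M(p,i),$$
and then twist the whole thing by $S_{r+r'}$, again using Lemma~\ref{xxlem2.2} together with $S_{r+r'}\ot M(p,i)\cong M(p,r+r'+i)$, to recover the general statement.

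Next I would address the case $l=p$ separately if needed (it is included in $2\le l\le p$), but more importantly handle the case $2\le l\le p$ with $m=p$ as in the setup $M=M(l,0)\ot M(p,0)$. Here the key observation is that $M(p,0)\cong P(S_0)$ is projective by Corollary~\ref{xxcor1.6}(2,3), and the tensor product of any module with a projective module over a Hopf algebra is again projective (since $U$ is self-injective, projective equals injective, and $-\ot P$ is exact and takes free modules to free modules). Therefore $M$ is projective, so $M\cong P(M)\cong P(M/\rad M)=P(\operatorname{head}(M))$. Now Corollary~\ref{xxcor2.7} computes $\operatorname{head}(M)=M/u\cdot M\cong\bigoplus_{i=0}^{l-1}S_i$, and since the projective cover functor is additive, $P(M)\cong\bigoplus_{i=0}^{l-1}P(S_i)\cong\bigoplus_{i=0}^{l-1}M(p,i)$ by Corollary~\ref{xxcor1.6}(3). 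This yields the $r=r'=0$, $m=p$ case.

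The main obstacle — really the only nontrivial input — is the computation of $\operatorname{head}(M)$, but that has already been done in Corollary~\ref{xxcor2.7} via the filtration $M=\bigoplus_s M(s)$, the action $u\cdot M(s)\subseteq M(s+1)$ from Lemma~\ref{xxlem2.3}(1), and the dimension count $\dim M(s+1)/u\cdot M(s)=1$ for $2\le s\le l$ and $0$ otherwise. So in the present proof I would simply assemble these pieces: cite Lemma~\ref{xxlem2.2} to reduce to $r=r'=0$, cite Corollary~\ref{xxcor1.6}(2) to see $M$ is projective, cite Corollary~\ref{xxcor2.7} for the head, cite Corollary~\ref{xxcor1.6}(3) to identify $P(S_i)\cong M(p,i)$, and finally twist back by $S_{r+r'}$. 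No new computation is required; the proposition is a corollary of the lemmas already proved, which is presumably why the authors state it with a one-line justification.
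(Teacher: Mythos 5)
Your proposal is correct and follows essentially the same route as the paper: the authors also observe that $M=M(l,0)\ot M(p,0)$ is projective because $M(p,0)$ is, deduce $M\cong P(M/u\cdot M)\cong\bigoplus_{i=0}^{l-1}P(S_i)\cong\bigoplus_{i=0}^{l-1}M(p,i)$ from Corollaries~\ref{xxcor1.6} and~\ref{xxcor2.7}, and then twist by Lemma~\ref{xxlem2.2} to get general $r,r'$. No substantive difference.
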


\begin{proof}
We have already proven that
$M(l,0)\ot M(p, 0)\cong \bigoplus\limits_{i=0}^{l-1}M(p, i)$.
Then the proposition follows from the isomorphism and
Lemma \ref{xxlem2.2}.
\end{proof}

\begin{lemma}
\label{xxlem2.9}
Let $2\leqslant l\<m\leqslant p$ and retain the above notation.
\begin{enumerate}
\item[(1)]
$M$ contains a submodule isomorphic
to $M(l-1,1)\ot M(m-1,0)$.
\item[(2)]
For each $r\in\mathbb{Z}_p$,
$M(l,r)\ot M(m,0)$ contains a submodule
isomorphic to $M(l-1,r+1)\ot M(m-1,0)$.
\item[(3)]
For $r\in\mathbb{Z}_p$ and $1\leqslant s\leqslant l-1$,
$M(l,r)\ot M(m,0)$ contains a submodule
isomorphic to $M(l-s,r+s)\ot M(m-s,0)$.
\end{enumerate}
\end{lemma}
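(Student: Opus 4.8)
The plan is to reduce (3) and (2) to (1), and to prove (1) by writing down an explicit injective $U$-module homomorphism
$$\Psi\colon\ M(l-1,1)\otimes M(m-1,0)\ \longrightarrow\ M(l,0)\otimes M(m,0).$$
For the reductions, tensor the inclusion asserted in (1) with the one-dimensional module $S_{r}$: since $S_{r}\otimes(X\otimes Y)\cong(S_{r}\otimes X)\otimes Y$, while $S_{r}\otimes M(l,0)\cong M(l,r)$ and $S_{r}\otimes M(l-1,1)\cong M(l-1,r+1)$ by Lemma~\ref{xxlem2.2}, one gets precisely the inclusion in (2). Part (3) then follows by induction on $s$: the case $s=1$ is (2), and if $M(l,r)\otimes M(m,0)$ already contains a copy of $M(l-s,r+s)\otimes M(m-s,0)$ with $1\le s\le l-2$, then the parameters $2\le l-s\le m-s\le p$ still satisfy the hypotheses, so applying (2) inside that submodule produces a further submodule isomorphic to $M(l-s-1,r+s+1)\otimes M(m-s-1,0)$, and a submodule of a submodule is a submodule. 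So everything comes down to (1).

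For (1) I would work in the polynomial model, writing $M(l,0)=\Bbbk[t]/(t^{l})$ and $M(m,0)=\Bbbk[s]/(s^{m})$ with $u$ acting by multiplication by $t$ (resp.\ $s$) and $w$ by degree, so that $M(l,0)\otimes M(m,0)=\Bbbk[t,s]/(t^{l},s^{m})$ with $u$ acting by multiplication by $t+s$; correspondingly $M(l-1,1)=t\,\Bbbk[t]/(t^{l})$ and $M(m-1,0)=\Bbbk[s]/(s^{m-1})$. The images $\gamma_{1},\dots,\gamma_{l-1}$ of $t,t^{2},\dots,t^{l-1}$ in $M(l-1,1)\otimes M(m-1,0)$ generate this module, and (apart from $\gamma_{i}=0$ for $i\ge l$) the relations among them are generated by the ``boundary'' relations forced by $\bar s^{\,m-1}=0$, namely
$$\sum_{k=0}^{m-1}(-1)^{k}\binom{m-1}{k}\,u^{\,m-1-k}\gamma_{i+k}=0\qquad(1\le i\le l-1).$$
Hence giving a $U$-module map $\Psi$ amounts to choosing homogeneous elements $g_{i}:=\Psi(\gamma_{i})$ in the weight-$i$ component of $M(l,0)\otimes M(m,0)$ (with $g_{i}=0$ for $i\ge l$) subject to the same relations, and then $\Psi$ sends the standard basis vector $t^{i}\otimes\bar s^{\,j}$ to $\sum_{k}(-1)^{k}\binom{j}{k}u^{\,j-k}g_{i+k}$.

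Such $g_{i}$ exist: they vary over a space of dimension $\sum_{i=1}^{l-1}\dim\bigl(M(l,0)\otimes M(m,0)\bigr)[i]=\sum_{i=1}^{l-1}(i+1)$ (using $l\le m$), while the relations above (living in the weight-$(i+m-1)$ components) impose $\sum_{i=1}^{l-1}(l-i)$ scalar conditions, so the solution space has dimension at least $\sum_{i=1}^{l-1}(2i+1-l)=l-1>0$. The hard part — which I expect to be the main obstacle — is to choose the $g_{i}$ so that $\Psi$ is injective. Since every nonzero submodule meets the socle, $\Psi$ is injective if and only if it is injective on $\soc\bigl(M(l-1,1)\otimes M(m-1,0)\bigr)$; and by Corollary~\ref{xxcor2.6} together with Lemma~\ref{xxlem2.2} (applied to $M(l,0)\otimes M(m,0)$, and to $M(l-1,1)\otimes M(m-1,0)\cong S_{1}\otimes\bigl(M(l-1,0)\otimes M(m-1,0)\bigr)$) both socles are multiplicity-free with one simple summand in each of the weights $m-1,\dots,l+m-3$ — the target having in addition a summand in weight $l+m-2$ — with explicit generators. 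So it suffices to pick $g_{1},\dots,g_{l-1}$ in the solution space for which the induced map between these two $(l-1)$-dimensional socles is an isomorphism; this is an open condition, which I would verify on a convenient explicit choice (for $l=2$ the single generator is forced, up to scalar, to $x_{1}\otimes y_{2}-(m-1)\,x_{2}\otimes y_{1}$, and in general I would aim for an ``upper-triangular'' choice making the leading terms of $\Psi(t^{i}\otimes\bar s^{\,j})$, in a suitable monomial order on $\Bbbk[t,s]/(t^{l},s^{m})$, linearly independent monomials). One should also keep an eye on the binomial coefficients $\binom{m-1}{k}$ and $\binom{j}{k}$ modulo $p$; as a cross-check, when $m=p$ one can argue instead via Proposition~\ref{xxpro2.8}, which exhibits $M(l,0)\otimes M(m,0)$ as projective–injective, so that $M(l-1,1)\otimes M(m-1,0)$ embeds as soon as its injective hull does, and that injective hull is a direct summand of $M(l,0)\otimes M(m,0)$ by Corollaries~\ref{xxcor1.6} and~\ref{xxcor2.6}.
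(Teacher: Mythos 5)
Your reductions of (2) to (1) (tensoring with $S_r$ and using Lemma \ref{xxlem2.2}) and of (3) to (2) (induction on $s$) are exactly the paper's argument and are fine. The gap is in part (1), which is where all the content of the lemma lies. Your dimension count shows only that the space of candidate homomorphisms $\Psi\colon N:=M(l-1,1)\ot M(m-1,0)\to M$ cut out by your constraints is at least $(l-1)$-dimensional --- and even that rests on the unverified claim that the $\gamma_i$ generate $N$ subject only to the stated boundary relations. Nonvanishing of $\Hom_U(N,M)$ is far weaker than the existence of an embedding: the solution space always contains non-injective maps (e.g.\ maps factoring through proper quotients of $N$), and the locus where the induced map on socles is an isomorphism, while open, could a priori be empty. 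You explicitly defer the step that would rule this out (``I would verify on a convenient explicit choice'', ``I would aim for an upper-triangular choice''), so no embedding has actually been produced except in the case $l=2$. Your fallback via Proposition \ref{xxpro2.8} and injective hulls is correct but applies only when $m=p$, since for $m<p$ the module $M$ is not injective.

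The paper closes exactly this gap by exhibiting the map in closed form: $f(a_i\ot b_j)=(l-i)\,x_i\ot y_{j+1}+(j-m)\,x_{i+1}\ot y_j$ for $1\leqslant i\leqslant l-1$, $1\leqslant j\leqslant m-1$. Both coefficients are nonzero in $\Bbbk$ (since $0<l-i<p$ and $0<m-j<p$), injectivity follows from a triangularity argument in the index $i$, and equivariance for $w$ and $u$ is a direct check. Your $l=2$ computation recovers $f(a_1\ot b_1)$ up to scalar, so your setup points in the right direction; to complete the proof along your lines you would need to write down the general $g_i$ --- equivalently the displayed $f$ --- and verify both the boundary relations and the injectivity on socles, which is precisely the substance of the paper's proof.
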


\begin{proof}
(1)
Recall that $\{x_i\ot y_j|1\leqslant i\leqslant l, 1\leqslant j\leqslant m\}$ 
is a basis of $M$. Let $N=M(l-1, 1)\ot M(m-1, 0)$.
Let $\{a_1, a_2, \cdots, a_{l-1}\}$ and $\{b_1, b_2, \cdots, b_{m-1}\}$
be the standard bases of $M(l-1,1)$ and $M(m-1,0)$, respectively.
Then $\{a_i\ot b_j|1\leqslant i\leqslant l-1, 1\leqslant j\leqslant m-1\}$
is a basis of $N$. By definition, we have
$w\cdot(x_i\ot y_j)=(i+j-2)x_i\ot y_j$
for all $1\leqslant i\leqslant l$ and $1\leqslant j\leqslant m$, and
$w\cdot(a_i\ot b_j)=(i+j-1)a_i\ot b_j$ for all $(i,j)$.
Now define a $\Bbbk$-linear map $f: N\ra M$ by
$$f(a_i\ot b_j)=(l-i)x_i\ot y_{j+1}+(j-m)x_{i+1}\ot y_j,$$
where $1\leqslant i\leqslant l-1$ and $1\leqslant j\leqslant m-1$.
It is easy to see that $f$ is a $\Bbbk$-linear injection and
that $f(w\cdot(a_i\ot b_j))=w\cdot f(a_i\ot b_j)$ for all
$(i,j)$.
Then by a straightforward computation, one can check that
$f(u\cdot(a_i\ot b_j))=u\cdot f(a_i\ot b_j)$ for all $(i,j)$.
This finishes the proof of part (1).

(2) This follows from part (1) and Lemma \ref{xxlem2.2}.

(3) The assertion follows from induction on $s$ and part (2).
\end{proof}

\begin{theorem}
\label{xxthm2.10}
Let $1\leqslant l\<m< p$ and suppose that $l+m\leqslant p$.
\begin{enumerate}
\item[(1)]
$$M(:=M(l,0)\ot M(m, 0))\cong\bigoplus\limits_{i=1}^l
M(m-l-1+2i,l-i).$$
\item[(2)]
Let $r, r'\in{\mathbb Z}_p$. Then
$$M(l,r)\ot M(m,r')\cong\bigoplus\limits_{i=1}^l
M(m-l-1+2i, r+r'+l-i).$$
\end{enumerate}
\end{theorem}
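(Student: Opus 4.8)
The plan is to prove part (1) and then deduce part (2) immediately from Lemma \ref{xxlem2.2}, exactly as in Proposition \ref{xxpro2.8}: once we know $M(l,0)\ot M(m,0)\cong\bigoplus_{i=1}^l M(m-l-1+2i,l-i)$, tensoring on the left by $S_{r'}$ and on the right by $S_{r}$ shifts every weight by $r+r'$, giving (2). So the real content is (1), which I would prove by induction on $l$, the shorter of the two lengths.

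\emph{Base case.} When $l=1$, $M(1,0)\cong S_0$ is the trivial module, and Lemma \ref{xxlem2.2} gives $M(1,0)\ot M(m,0)\cong M(m,0)$; the right-hand side of the formula for $l=1$ is the single summand $M(m-1+2,0)=M(m,0)$ ... wait, one must be careful: for $l=1$ the index $i$ runs over $i=1$ only and $m-l-1+2i=m-1-1+2=m$, $l-i=0$, so indeed the single summand is $M(m,0)$. Good.

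\emph{Inductive step.} Assume the result for $l-1$ (with any $m'$ in the allowed range). By Lemma \ref{xxlem2.9}(1), $M=M(l,0)\ot M(m,0)$ contains a submodule $N\cong M(l-1,1)\ot M(m-1,0)$. Since $l-1<m-1$ (when $l\geq 2$; the edge case $l-1=m-1$, i.e. $l=m$, can be handled by the same Lemma \ref{xxlem2.9} chain together with a direct check, or folded into the induction by also allowing $l=m$) and $(l-1)+(m-1)=l+m-2\leq p$, the inductive hypothesis, after shifting weights by $1$ via Lemma \ref{xxlem2.2}, gives
$$
N\cong\bigoplus_{i=1}^{l-1}M\bigl((m-1)-(l-1)-1+2i,\ (l-1)-i+1\bigr)
=\bigoplus_{i=1}^{l-1}M(m-l-1+2i,\ l-i).
$$
This already accounts for the $i=1,\dots,l-1$ summands of the claimed decomposition; it remains to identify the quotient $M/N$. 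Now invoke the structural results we have: by Corollary \ref{xxcor2.7}, $\mathrm{head}(M)\cong\bigoplus_{i=0}^{l-1}S_i$, so $M$ is a direct sum of exactly $l$ indecomposable (uniserial) modules, and by Corollary \ref{xxcor2.6}(3), $\mathrm{soc}(M)\cong\bigoplus_{s=m+1}^{l+m}S_{s-2}$. Each indecomposable summand $M(a,b)$ of $M$ contributes one simple to the head (namely $S_b$) and one simple to the socle (namely $S_{b+a-1}$). Matching the $l$ head constituents $S_0,S_1,\dots,S_{l-1}$ and the $l$ socle constituents $S_{m-1},S_m,\dots,S_{l+m-2}$: since all these modules are uniserial with at most one weight vector in each weight (Lemma \ref{xxlem2.1}(1)), the summand with head $S_i$ must be $M(a_i, i)$ with socle top weight $i+a_i-1$, and the multiset $\{i+a_i-1\}$ must equal $\{m-1,\dots,l+m-2\}$. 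A counting/parity argument (total dimension $lm=\sum a_i$, together with the fact that the summands' weight-supports are disjoint intervals tiling the weight-support of $M$) forces the pairing $i\leftrightarrow$ socle weight $m-l-1+2i+ (l-i)-1+\dots$, i.e. pins down $a_i=m-l-1+2i$ and $b_i=l-i$; equivalently, $M/N$ must be the single remaining summand $M(m-l+1,\,l-l)=M(m-l+1,0)$, which is the $i=l$ term.

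\emph{Main obstacle.} The delicate point is the last step: knowing $N\subseteq M$ with $N$ a known direct sum does not by itself show $N$ is a \emph{direct summand} of $M$ or pin down $M/N$ — exact sequences of $U$-modules need not split. The clean way around this is not to split the sequence but to use invariants that are additive on all of $M$: the head (Corollary \ref{xxcor2.7}), the socle (Corollary \ref{xxcor2.6}), the total dimension, and the weight-space dimensions $\dim M[r]$ computed from Lemma \ref{xxlem2.3}(3). Since every $U$-module is a direct sum of the uniserial $M(a,b)$ (Proposition \ref{xxpro1.7}), an indecomposable summand is completely determined by its head constituent and its length; the head is known, so one only needs the lengths, and these are forced by comparing $\sum_i \dim M(a_i,b_i)[r]$ with the explicitly known profile of $\dim M[r]$ from Lemma \ref{xxlem2.3}(3). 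I expect this bookkeeping — showing that the only multiset of pairs $(a_i,b_i)$ with the prescribed heads, prescribed socles, and correct weight profile is $\{(m-l-1+2i,\,l-i)\}_{i=1}^l$ — to be the one genuinely non-formal part of the argument; everything else is Lemma \ref{xxlem2.9}, Lemma \ref{xxlem2.2}, and induction.
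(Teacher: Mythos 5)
Your setup is the same as the paper's: induction on $l$, base case from Lemma \ref{xxlem2.2}, and the submodule $N\cong M(l-1,1)\ot M(m-1,0)\cong\bigoplus_{i=1}^{l-1}M(m-l-1+2i,l-i)$ from Lemma \ref{xxlem2.9}(1). But the final step, which you yourself flag as the non-formal part, has a genuine gap: the invariants you propose to use (head, socle, total dimension, and the weight-space dimensions $\dim M[r]$) do \emph{not} determine the decomposition. Indeed, write the summands as $M(a_i,i)$, $0\le i\le l-1$ (forced by Corollary \ref{xxcor2.7}); the socle condition says the multiset $\{i+a_i-1\}$ equals $\{m-1,\dots,l+m-2\}$, i.e.\ $a_i=\sigma(i)-i+1$ for some bijection $\sigma:\{0,\dots,l-1\}\to\{m-1,\dots,l+m-2\}$. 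For \emph{every} such $\sigma$ one gets $\sum_i a_i=lm$ and, because each weight-interval $[i,\sigma(i)]$ contains $[l-1,m-1]$, the same weight profile $\dim M[r]$ as in Lemma \ref{xxlem2.3}(3). Your claim that the summands' weight-supports are disjoint intervals tiling the support of $M$ is false --- they all overlap. Concretely, for $l=m=2$ the two candidates $M(2,0)\oplus M(2,1)$ and $M(1,1)\oplus M(3,0)$ have identical head, socle, dimension and weight profile, and only the second is correct (Corollary \ref{xxcor2.12}(2)). So no bookkeeping with these invariants can close the argument.

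The paper closes the gap differently: it exhibits an explicit complement. One checks that $w\cdot(x_1\ot y_1)=0$ and $u^{l+m-2}\cdot(x_1\ot y_1)=\binom{l+m-2}{l-1}x_l\ot y_m\neq 0$ (here $l+m\le p$ is used to keep the binomial coefficient nonzero), while $u^{l+m-1}\cdot(x_1\ot y_1)=0$; hence the cyclic submodule $\langle x_1\ot y_1\rangle$ is isomorphic to $M(l+m-1,0)$, which is exactly the missing $i=l$ summand, with socle $S_{l+m-2}$. Since $\soc(N)\cong\bigoplus_{i=1}^{l-1}S_{m+i-2}$ contains no copy of $S_{l+m-2}$, and any nonzero submodule of the uniserial module $\langle x_1\ot y_1\rangle$ contains its socle, we get $N\cap\langle x_1\ot y_1\rangle=0$; then $\dim N+\dim\langle x_1\ot y_1\rangle=(l-1)(m-1)+(l+m-1)=lm$ forces $M=N\oplus\langle x_1\ot y_1\rangle$. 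This is the idea your proposal is missing; your part (2) reduction via Lemma \ref{xxlem2.2} is fine and matches the paper.
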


\begin{proof}
(1) We use induction on $l$. If $l=1$, it follows from
Lemma \ref{xxlem2.2} that the assertion holds. Now
assume $l>1$. Then $m\geqslant 2$. By Lemma \ref{xxlem2.9}(1),
there exists a submodule $N$ of $M$ such that
$N\cong M(l-1,1)\ot M(m-1, 0)$. By the induction hypothesis 
and Lemma \ref{xxlem2.2}, we have
$$\begin{array}{rcl}
N\cong S_{1}\ot M(l-1,0)\ot M(m-1, 0)
\cong\bigoplus\limits_{i=1}^{l-1}
M(m-l-1+2i, l-i).\\
\end{array}$$
Therefore, one knows that
$${\rm soc}(N)\cong \bigoplus\limits_{i=1}^{l-1}
M(1,l- i+(m-l-1+2i)-1)\cong \bigoplus\limits_{i=1}^{l-1}
S_{m+i-2}.$$

Now we use the $\Bbbk$-basis of $M$ as stated before,
and consider the submodule $\langle x_1\ot y_1\rangle$
of $M$ generated by $x_1\ot y_1$. For the convenience,
we set $x_i=0$ for $i>l$, and $y_j=0$ for $j>m$. Then
since $1<l+m\leqslant p$, we have
$$\begin{array}{rcl}
u^{l+m-2}\cdot(x_1\ot y_1)&=&
\sum\limits_{i=0}^{l+m-2}\binom{l+m-2}{i}u^i
  \cdot x_1\ot u^{l+m-2-i}\cdot y_1\\
&=&\sum\limits_{i=0}^{l+m-2}\binom{l+m-2}{i}x_{i+1}
  \ot y_{l+m-1-i}\\
&=&\binom{l+m-2}{l-1}x_{l}\ot y_m\neq 0.\\
\end{array}$$
However, $u^{l+m-1}\cdot(x_1\ot y_1)=0$. It is easy to
see that $w\cdot(x_1\ot y_1)=0$. It follows that
$\langle x_1\ot y_1\rangle$ is isomorphic to $M(l+m-1, 0)$
with ${\rm soc}(\langle x_1\ot y_1\rangle)=k(x_l\ot y_m)
\cong S_{l+m-2}$. Note that $S_{l+m-2}$ is not isomorphic
to any submodule of ${\rm soc}(N)$. Hence
$N\cap\langle x_1\ot y_1\rangle=0$,
and consequently, the sum $N+\langle x_1\ot y_1\rangle$ of
the two submodules of $M$ is a direct sum in $M$.
Thus, we have
$$\begin{aligned}
&{\rm dim}(N+\langle x_1\ot y_1\rangle)
={\rm dim}(N)+{\rm dim}(\langle x_1\ot y_1\rangle)\\
=&(l-1)(m-1)+l+m-1 =lm={\rm dim}(M).
\end{aligned}
$$
It follows that
$M=N\oplus\langle x_1\ot y_1\rangle\cong
\bigoplus\limits_{i=1}^l M(m-l-1+2i, l-i)$.

(2) It follows from Lemma \ref{xxlem2.2} and
part (1).
\end{proof}

\begin{theorem}
\label{xxthm2.11}
Let $1\leqslant l\<m<p$ and suppose that $l+m>p$.
\begin{enumerate}
\item[(1)]
$$ M\cong(\bigoplus\limits_{i=1}^{p-m}
M(m-l-1+2i, l-i))
\bigoplus(\bigoplus\limits_{i=1}^{l+m-p}
M(p, i-1)).$$
\item[(2)]
Let $r, r'\in{\mathbb Z}_p$. Then
$$M(l,r)\ot M(m,r')\cong(\bigoplus\limits_{i=1}^{p-m}
M(m-l-1+2i, r+r'+l-i))
\bigoplus(\bigoplus\limits_{i=1}^{l+m-p}
M(p, r+r'+i-1)).$$
\end{enumerate}
\end{theorem}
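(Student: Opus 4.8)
The plan is to prove part (1), which concerns $M:=M(l,0)\otimes M(m,0)$, by exhibiting $M$ as an internal direct sum $N\oplus P$ of two explicit submodules: a ``small'' submodule $N$ that Theorem \ref{xxthm2.10} already decomposes, and a projective submodule $P$ assembled from cyclic submodules of $M$ of dimension $p$. Part (2) will then follow immediately from part (1) by tensoring with $S_{r+r'}$ and invoking Lemma \ref{xxlem2.2}, exactly as in Theorem \ref{xxthm2.10}(2).

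For the submodule $N$, I would set $s:=l+m-p$; the hypotheses $l<m<p$ and $l+m>p$ give $1\leqslant s\leqslant l-1$, so Lemma \ref{xxlem2.9}(3) (with $r=0$) yields a submodule $N\subseteq M$ with $N\cong M(p-m,s)\otimes M(p-l,0)$. Since $p-m<p-l$ and $(p-m)+(p-l)=2p-l-m\leqslant p$, I can apply Theorem \ref{xxthm2.10}(1) to $M(p-m,0)\otimes M(p-l,0)$ and then twist by $S_s$ using Lemma \ref{xxlem2.2}; a short index substitution then gives $N\cong\bigoplus_{i=1}^{p-m}M(m-l-1+2i,\,l-i)$, which is precisely the non-projective part of the claimed decomposition.

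For the submodule $P$, I would work with the standard bases $\{x_i\}$, $\{y_j\}$ and the coproduct formula $u^k\cdot(x_1\otimes y_j)=\sum_{i}\binom{k}{i}(u^i\cdot x_1)\otimes(u^{k-i}\cdot y_j)$. Here one checks that for $1\leqslant j\leqslant s$ the vector $u^{p-1}\cdot(x_1\otimes y_j)$ is nonzero: the surviving terms are indexed by a range that is nonempty exactly because $j\leqslant l+m-p$, and the coefficients $\binom{p-1}{i}\equiv(-1)^i$ never vanish mod $p$. Because $x_1\otimes y_j$ is a $w$-eigenvector (eigenvalue $j-1$), the cyclic submodule $\langle x_1\otimes y_j\rangle$ is spanned by $u^0\cdot(x_1\otimes y_j),\dots,u^{p-1}\cdot(x_1\otimes y_j)$; since the top power is nonzero and $u^p=0$, all $p$ of these are nonzero and lie in distinct $w$-eigenspaces, so $\langle x_1\otimes y_j\rangle$ is uniserial of Loewy length $p$ with head $S_{j-1}$, i.e. $\langle x_1\otimes y_j\rangle\cong M(p,j-1)$. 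I would then put $P:=\sum_{j=1}^{s}\langle x_1\otimes y_j\rangle$ and show the natural surjection $\bigoplus_{j=1}^{s}M(p,j-1)\twoheadrightarrow P$ (standard generator of the $j$-th summand $\mapsto x_1\otimes y_j$) is injective: its kernel is a submodule of a self-injective module, hence meets the socle nontrivially if nonzero, but on ${\rm soc}(\bigoplus_j M(p,j-1))=\bigoplus_j S_{j+p-2}$ the map sends the $j$-th summand onto the nonzero vector $u^{p-1}\cdot(x_1\otimes y_j)$, which sits in the $w$-eigenspace with eigenvalue $j-2$; these eigenvalues are pairwise distinct for $1\leqslant j\leqslant s$, so the map is injective on the socle, hence injective. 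Thus $P\cong\bigoplus_{j=1}^{s}M(p,j-1)$ is projective.

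Finally I would glue: it suffices to prove $N\cap P=0$, since then $\dim N+\dim P=(p-m)(p-l)+p(l+m-p)=lm=\dim M$ forces $M=N\oplus P$, and combining the two identifications gives part (1). To get $N\cap P=0$ I would compare socles via $w$-eigenvalues: ${\rm soc}(N)\cong\bigoplus_{i=1}^{p-m}S_{m+i-2}$ occupies the eigenspaces with eigenvalues $m-1,\dots,p-2$, whereas ${\rm soc}(P)$ occupies those with eigenvalues $p-1,0,1,\dots,s-2$; using $l<p+1$ one sees these two sets of residues mod $p$ are disjoint, so ${\rm soc}(N\cap P)\subseteq{\rm soc}(N)\cap{\rm soc}(P)=0$ and therefore $N\cap P=0$. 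I expect the main obstacle to be exactly this eigenvalue bookkeeping modulo $p$ --- tracking which residues carry the socles of $N$ and of $P$, and confirming they are genuinely disjoint so that the socle test both forces $N\cap P=0$ and pins down the projective summands as $M(p,0),\dots,M(p,l+m-p-1)$; the two underlying $u$-computations (Lemma \ref{xxlem2.9} used as a black box, and the nonvanishing of $u^{p-1}\cdot(x_1\otimes y_j)$) are routine by comparison.
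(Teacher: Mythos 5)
Your proposal is correct and follows essentially the same route as the paper's proof: the same submodule $N$ obtained from Lemma \ref{xxlem2.9}(3) and Theorem \ref{xxthm2.10}, the same binomial computation showing the relevant cyclic submodules are projective of dimension $p$, the same socle/$w$-eigenvalue comparison to get directness, and the same dimension count. The only (immaterial) differences are that you generate the projective part from $x_1\otimes y_j$ rather than the paper's $x_i\otimes y_1$, and you spell out the injectivity-on-the-socle argument in slightly more detail.
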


\begin{proof}
(1)
Since $l+m>p$ and $p> m\>l\geqslant 2$, we have
$1\leqslant l+m-p=l-(p-m)\leqslant l-1$.
It follows from Lemma \ref{xxlem2.9}(3) that
there exists a submodule $N$ of $M$ such that
$$\begin{aligned}
N&\cong M(l-(l+m-p), l+m-p))\ot M(m-(l+m-p), 0)\\
&=M(p-m, l+m)\ot M(p-l,0).
\end{aligned}
$$
Note that $(p-l)+(p-m)=2p-(l+m)<p$ and $1\<p-m\<p-l<p$.
Then by Theorem \ref{xxthm2.10}(2), one gets that
$$\begin{array}{c}
N\cong M(p-m, l+m)\ot M(p-l,0)
\cong\bigoplus\limits_{i=1}^{p-m}
M(m-l-1+2i, l-i)\\
\end{array}$$
and
$${\rm soc}(N)\cong \bigoplus\limits_{i=1}^{p-m}
M(1, l-i+(m-l-1+2i)-1)\cong \bigoplus\limits_{i=1}^{p-m}
S_{m+i-2}.$$

Now we use the $\Bbbk$-basis of $M$ as stated before.
Let $1\leqslant i\leqslant l+m-p$. Consider the submodule
$\langle x_i\ot y_1\rangle$ of $M$ generated by $x_i\ot y_1$.
At first, we have $w\cdot(x_i\ot y_1)=(i-1)x_i\ot y_1$.
For the convenience, we set $x_j=0$ for $j>l$, and
$y_j=0$ for $j>m$. Since $i+1+(p-1)=i+p\leqslant l+m$
and $p-m\leqslant l-i$, we have
$$\begin{array}{rcl}
u^{p-1}\cdot(x_i\ot y_1)
&=&\sum\limits_{j=0}^{p-1}\binom{p-1}{j}u^j\cdot x_i
   \ot u^{p-1-j}\cdot y_1\\
&=&\sum\limits_{j=0}^{p-1}\binom{p-1}{j}x_{i+j}\ot y_{p-j}\\
&=&\sum\limits_{j=p-m}^{l-i}\binom{p-1}{j}x_{i+j}
   \ot y_{p-j}\neq 0.\\
\end{array}$$
Hence $\langle x_i\ot y_1\rangle
={\rm span}\{u^j\cdot(x_i\ot y_1)|j=0, 1, \cdots p-1\}
\cong M(p, i-1)$, a projective (injective) module.
Thus, ${\rm soc}(\langle x_i\ot y_1\rangle)\cong S_{i-2}$.
Obviously,
$$S_{1-2}, S_{2-2}, \cdots, S_{(l+m-p)-2}$$
are non-isomorphic simple $U$-modules,
and none of them is isomorphic to a submodule of
$N$. It follows that the sum
$N+\sum\limits_{i=1}^{l+m-p}\langle x_i\ot y_1\rangle$
of the submodules of $M$ is direct in $M$.
Hence
$$\begin{aligned}
&{\rm dim}(N+\sum\limits_{i=1}^{l+m-p}\langle x_i\ot y_1\rangle)
={\rm dim}(N)+\sum\limits_{i=1}^{l+m-p}
{\rm dim}(\langle x_i\ot y_1\rangle)\\
=&(p-m)(p-l)+p(l+m-p)=lm={\rm dim}(M).
\end{aligned}
$$
Thus, we have
$$\begin{array}{rcl}
M&=&N\bigoplus(\bigoplus\limits_{i=1}^{l+m-p}\langle x_i\ot y_1\rangle)\\
&\cong&(\bigoplus\limits_{i=1}^{p-m}
M(m-l-1+2i, l-i))
\bigoplus(\bigoplus\limits_{i=1}^{l+m-p}
M(p, i-1)).\\
\end{array}$$

(2) It follows from Lemma \ref{xxlem2.2} and
part (1).
\end{proof}

\begin{corollary}
\label{xxcor2.12}
Let $1\leqslant l\<m<p$ and $r, r'\in{\mathbb Z}_p$. 
\begin{enumerate}
\item[(1)] 
There is a simple summand in $M(l,r)\ot M(m,r')$ if and 
only if $l=m$.
\item[(2)] 
If $l=m=2<p$, then $M(2, r)\ot M(2, r')\cong
S_{r+r'+1}\oplus M(3, r+r')$.
\end{enumerate}
\end{corollary}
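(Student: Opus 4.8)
The plan is to read both statements directly off the explicit tensor decompositions already established in Theorems~\ref{xxthm2.10}(2) and~\ref{xxthm2.11}(2). The key observation is that whether an indecomposable direct summand $M(s,t)$ is simple depends only on $s$ — by Corollary~\ref{xxcor1.6}(1) it is simple if and only if $s=1$ — so one may ignore the second parameters and just read off the multiset of first parameters.

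For part~(1), I would split according to the sign of $l+m-p$. If $l+m\leqslant p$, Theorem~\ref{xxthm2.10}(2) says the first parameters occurring in $M(l,r)\otimes M(m,r')$ are $s_i:=m-l-1+2i$ for $i=1,\dots,l$; if $l+m>p$, Theorem~\ref{xxthm2.11}(2) gives the first parameters $s_i=m-l-1+2i$ for $i=1,\dots,p-m$ together with $l+m-p$ copies of $p$. The values $s_i$ form an increasing arithmetic progression with $s_1=m-l+1$; since $l\leqslant m$ we have $s_1\geqslant 1$, with equality if and only if $l=m$, and $s_i\geqslant s_1+2\geqslant 3$ for $i\geqslant 2$. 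The extra parameters equal to $p$ are never $1$ since $p\geqslant 2$. Hence a simple summand occurs if and only if $l=m$, in which case it comes from the single index $i=1$ and equals $M(1,r+r'+l-1)=S_{r+r'+l-1}$ (with multiplicity one). The degenerate cases $l=1$ and $l=m=1$ are harmless: there $l+m\leqslant p$ holds automatically, and the claim is consistent with $M(1,r)\otimes M(m,r')\cong M(m,r+r')$ from Lemma~\ref{xxlem2.2}.

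For part~(2), I would specialize $l=m=2$ (so $p\geqslant 3$). If $p\geqslant 5$, then $l+m=4\leqslant p$ and Theorem~\ref{xxthm2.10}(2) gives $M(2,r)\otimes M(2,r')\cong M(1,r+r'+1)\oplus M(3,r+r')$. If $p=3$, then $l+m=4>3=p$ and Theorem~\ref{xxthm2.11}(2), with $p-m=1$ and $l+m-p=1$, gives $M(1,r+r'+1)\oplus M(3,r+r')$ as well. Either way the decomposition is $S_{r+r'+1}\oplus M(3,r+r')$, as asserted.

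I do not anticipate a genuine obstacle here: all the real work sits inside Theorems~\ref{xxthm2.10} and~\ref{xxthm2.11}. The only points to be careful about are the case distinction $l+m\leqslant p$ versus $l+m>p$ — in particular that for part~(2) the prime $p=3$ falls under Theorem~\ref{xxthm2.11} whereas $p\geqslant 5$ falls under Theorem~\ref{xxthm2.10} — together with checking that the degenerate small cases ($l=1$, $l=m=1$) behave as expected and keeping the index shift by $r+r'$ consistent throughout.
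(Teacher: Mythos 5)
Your proposal is correct and follows essentially the same route as the paper, which simply derives the corollary from Corollary~\ref{xxcor1.6} together with Theorems~\ref{xxthm2.10} and~\ref{xxthm2.11}; your case analysis on $l+m\leqslant p$ versus $l+m>p$ and the check that the first parameter $m-l-1+2i$ equals $1$ only for $i=1$ with $l=m$ is exactly the intended (routine) verification.
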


\begin{proof}
It follows from Corollary \ref{xxcor1.6}, Theorems \ref{xxthm2.10} 
and \ref{xxthm2.11}.
\end{proof}

\begin{proof}[Proof of Theorem \ref{xxthm0.2}]
(1) This is Theorem \ref{xxthm2.10}.

(2) If $m=p$, this follows from Lemma \ref{xxlem2.2} and
Proposition \ref{xxpro2.8}. If $m<p$, it is Theorem \ref{xxthm2.11}.
\end{proof}

Throughout the rest of this section, let $a=[S_1]$ and
$x=[M(2,0)]$ in the Green ring $r(U)$ of $U$. Since $U$
is cocommutative, $r(U)$ is a commutative ring. The
following lemma is similar to \cite[Lemma 3.8]{CVZ}
(see \cite[Lemma 3.8]{CVZ} for a proof.)

\begin{lemma}
\label{xxlem2.13}
Retain the above notation. The following hold.
\begin{enumerate}
\item[(1)]
$a^p=1$ and $[M(l, r)]=a^r[M(l,0)]$ for all
$2\leqslant l\leqslant p$ and $r\in{\mathbb Z}_p$.
\item[(2)]
If $p>2$, then $[M(l+1, 0)]=x[M(l,0)]-a[M(l-1,0)]$
for all $2\leqslant l\leqslant p-1$.
\item[(3)]
$x[M(p,0)]=(a+1)[M(p,0)]$.
\item[(4)]
$r(U)$ is generated by $a$ and $x$ as a ring.
\end{enumerate}
\end{lemma}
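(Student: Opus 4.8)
The plan is to read off each identity from the tensor decompositions already established, using the ring homomorphism property $[M][N]=[M\otimes N]$ in $r(U)$ and the fact that $a=[S_1]$, $x=[M(2,0)]$.

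For part (1), the relation $a^p=1$ is immediate from Lemma~\ref{xxlem2.2}: iterating $S_r\otimes S_{r'}\cong S_{r+r'}$ gives $a^p=[S_p]=[S_0]=[\Bbbk]=1$, since $p\equiv 0$ in $\mathbb{Z}_p$ and $S_0$ is the trivial module. The relation $[M(l,r)]=a^r[M(l,0)]$ is exactly the content of $M(l,r)\cong S_r\otimes M(l,0)$ from Lemma~\ref{xxlem2.2}, applied $r$ times.

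For parts (2) and (3), the idea is to compute $x\cdot[M(l,0)]=[M(2,0)\otimes M(l,0)]$ directly. When $p>2$ and $2\leqslant l\leqslant p-1$, we have $2<l+2$ with $2+l\leqslant p+1$; if $2+l\leqslant p$ this is Theorem~\ref{xxthm0.2}(1) (equivalently Theorem~\ref{xxthm2.10}) with the roles $l\mapsto 2$, $m\mapsto l$, giving $M(2,0)\otimes M(l,0)\cong M(l-1,1)\oplus M(l+1,0)$, i.e. $x[M(l,0)]=[M(l+1,0)]+a[M(l-1,0)]$ after using part (1) to rewrite $[M(l-1,1)]=a[M(l-1,0)]$; if $2+l=p+1$, i.e. $l=p-1$, one instead uses Theorem~\ref{xxthm2.11} (the case $l+m>p$ with $l\mapsto 2$, $m\mapsto p-1$), which yields $M(2,0)\otimes M(p-1,0)\cong M(p-2,1)\oplus M(p,0)$, again of the asserted form. (One must be slightly careful that Theorems~\ref{xxthm2.10} and \ref{xxthm2.11} are stated for $2\leqslant l\leqslant m$, so the small cases $l=2$ and $l=m$ should be checked against Corollary~\ref{xxcor2.12}, but these fit the same pattern.) Solving this recursion for $[M(l+1,0)]$ gives part (2). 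Part (3) is Proposition~\ref{xxpro2.8} with $l\mapsto 2$, $r=r'=0$: $M(2,0)\otimes M(p,0)\cong M(p,0)\oplus M(p,1)$, so $x[M(p,0)]=[M(p,0)]+a[M(p,0)]=(a+1)[M(p,0)]$; when $p=2$ one checks directly that $M(2,0)\otimes M(2,0)\cong M(2,0)\oplus M(2,1)$ gives the same conclusion.

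For part (4), the strategy is induction: $r(U)$ is the free $\mathbb{Z}$-module on $\{[M(l,r)]\}$ by Proposition~\ref{xxpro1.7}, and by part (1) every $[M(l,r)]$ is $a^r[M(l,0)]$, so it suffices to show each $[M(l,0)]$ lies in $\mathbb{Z}[a,x]$. This holds for $l=1$ ($[M(1,0)]=[S_0]=1$) and $l=2$ ($=x$), and parts (2) and (3) express $[M(l+1,0)]$ in terms of $[M(l,0)]$, $[M(l-1,0)]$, $a$ and $x$, so induction on $l$ finishes it. The main obstacle is purely bookkeeping: making sure the boundary cases $l=2$, $l=m$, $l+m=p$, $l+m=p+1$, and $p=2$ are each matched to the correct one of Lemma~\ref{xxlem2.2}, Proposition~\ref{xxpro2.8}, Theorem~\ref{xxthm2.10}, Theorem~\ref{xxthm2.11}, or Corollary~\ref{xxcor2.12}, so that the single clean recursion in part (2) really is valid for the full stated range $2\leqslant l\leqslant p-1$.
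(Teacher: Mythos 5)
Your proposal is correct and follows essentially the same route the paper intends: the paper omits the proof, deferring to \cite[Lemma 3.8]{CVZ}, and that argument likewise reads off each identity from Lemma \ref{xxlem2.2}, Proposition \ref{xxpro2.8}, and Theorems \ref{xxthm2.10}--\ref{xxthm2.11}, then gets (4) by the same induction. Your bookkeeping of the boundary cases ($l=p-1$ via Theorem \ref{xxthm2.11}, $l=2$ via Corollary \ref{xxcor2.12}, and $p=2$ for part (3)) is accurate.
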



Note that Lemma \ref{xxlem2.13}(1) is slightly different
from \cite[Lemma 3.8(1)]{CVZ}. The following is
similar to \cite[Corollary 3.9]{CVZ} and its proof is omitted.

\begin{corollary}
\label{xxcor2.14}
Let $u_1, u_2, \cdots, u_p$ be a series of elements
of the ring $r(U)$ defined recursively by $u_1=1$,
$u_2=x$ and
$$u_l=xu_{l-1}-au_{l-2},\  p\geqslant l\geqslant 3.$$
Then $[M(l, 0)]=u_l$ for all $1\leqslant l\leqslant p$
and $(x-a-1)u_p=0$.
\end{corollary}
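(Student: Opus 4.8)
The plan is to read off all the assertions directly from Lemma~\ref{xxlem2.13}, which already isolates exactly the relations that the recursion defining the $u_l$ is built to mimic; no new representation-theoretic input is required.

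First I would establish $[M(l,0)]=u_l$ for $1\leqslant l\leqslant p$ by induction on $l$. For the base cases: $M(1,0)\cong S_0$ by Corollary~\ref{xxcor1.6}(1), and $S_0$ is the trivial $U$-module (since $\e(u)=\e(w)=0$), hence the multiplicative identity of $r(U)$, so $[M(1,0)]=1=u_1$; and $[M(2,0)]=x=u_2$ by the very definition of $x$. If $p=2$ these are all the cases and the recursion defining $u_l$ for $l\geqslant 3$ is vacuous, so nothing further is needed.

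For the inductive step, assume $p>2$, let $3\leqslant l\leqslant p$, and suppose $[M(l-1,0)]=u_{l-1}$ and $[M(l-2,0)]=u_{l-2}$ are already known. Lemma~\ref{xxlem2.13}(2), applied with index $l-1$ in place of its ``$l$'' (legitimate since $2\leqslant l-1\leqslant p-1$), gives $[M(l,0)]=x[M(l-1,0)]-a[M(l-2,0)]=xu_{l-1}-au_{l-2}=u_l$, closing the induction. Finally, $u_p=[M(p,0)]$ by what was just proved, and Lemma~\ref{xxlem2.13}(3) states precisely $x[M(p,0)]=(a+1)[M(p,0)]$, that is, $(x-a-1)u_p=0$.

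There is no genuine obstacle here: the entire content sits in Lemma~\ref{xxlem2.13}, which in turn rests on the tensor decompositions (Proposition~\ref{xxpro2.8} and Theorems~\ref{xxthm2.10}--\ref{xxthm2.11}). The only points needing a moment's attention are the identification of $S_0$ with the unit of $r(U)$ and the degenerate case $p=2$, both trivial. One may also observe that the $u_l$ are exactly the polynomials $f_l(a,x)$ from the introduction, since $f_n(y,z)$ satisfies the same recursion $f_n=zf_{n-1}-yf_{n-2}$ with $f_1=1$ and $f_2=z$; combined with $a^p=1$ from Lemma~\ref{xxlem2.13}(1), this is the bridge to the presentation $r(U)\cong{\mathbb Z}[y,z]/I$ in Corollary~\ref{xxcor0.3}.
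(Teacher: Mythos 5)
Your proof is correct and is exactly the argument the paper has in mind: the paper omits the proof, referring to \cite[Corollary 3.9]{CVZ}, whose argument is precisely this induction using Lemma~\ref{xxlem2.13}(2) for the recursion and Lemma~\ref{xxlem2.13}(3) for the relation $(x-a-1)u_p=0$. Your handling of the base cases (identifying $S_0$ with the unit of $r(U)$) and the degenerate case $p=2$ is sound.
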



Let $R$ be the subring of $r(U)$ generated by $a$,
and $\langle a\rangle$ the subgroup of the group
of the invertible elements of $r(U)$ generated by
$a$. Then $\langle a\rangle$ is a cyclic group of
order $p$ by Lemma \ref{xxlem2.13}(1), and
$R={\mathbb Z}\langle a\rangle$ is the group ring
of $\langle a\rangle$ over $\mathbb Z$. Let
${\mathbb Z}[y,z]$ be the polynomial algebra over
$\mathbb Z$ in two variables $y$ and $z$. Define
$f_n(y,z)\in{\mathbb Z}[y,z]$, $n\geqslant 1$,
recursively, by $f_1(y,z)=1$, $f_2(y,z)=z$ and
$$f_n(y,z)=zf_{n-1}(y,z)-yf_{n-2}(y,z),\ n\geqslant 3.$$
Then by \cite[Lemma 3.11]{CVZ}, for any $n\geqslant 1$,
we have
$$f_{n}(y, z)=\sum\limits_{i=0}^{[(n-1)/2]}
(-1)^i\binom{n-1-i}{i}y^iz^{n-1-2i},$$
where $[(n-1)/2]$ denotes the integer part of
$(n-1)/2$. Hence deg$_z(f_n(y,z))=n-1$ for all
$n\geqslant 1$, where ${\rm deg}_z(f(y,z))$ denotes
the degree of $f(y,z)\in{\mathbb Z}[y,z]$ in $z$.
See \cite[Section 3]{CVZ} for more information about
$f_n(y,z)$. Let $I=(y^p-1, (z-y-1)f_p(y,z))$ be
the ideal of ${\mathbb Z}[y,z]$ generated by $y^p-1$
and $(z-y-1)f_p(y,z)$.

With the above notations, we have the following
corollary that is similar to \cite[Theorem 3.10]{CVZ}.
See the proof of \cite[Theorem 3.10]{CVZ} for some
details.

\begin{corollary}
\label{xxcor2.15}
The Green ring $r(U)$ is isomorphic to the
factor ring ${\mathbb Z}[y,z]/I$.
\end{corollary}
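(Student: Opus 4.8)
The plan is to construct an explicit ring homomorphism $\Phi\colon {\mathbb Z}[y,z]\to r(U)$ by setting $\Phi(y)=a$ and $\Phi(z)=x$, and then to show that it is surjective with kernel exactly $I$. Surjectivity is immediate from Lemma~\ref{xxlem2.13}(4), which says $r(U)$ is generated by $a$ and $x$ as a ring. That $I\subseteq\ker\Phi$ is also easy: $\Phi(y^p-1)=a^p-1=0$ by Lemma~\ref{xxlem2.13}(1), and since $f_n(y,z)$ obeys the same recursion $f_n=zf_{n-1}-yf_{n-2}$ with the same initial values as the elements $u_l$ of Corollary~\ref{xxcor2.14}, induction gives $\Phi(f_p(y,z))=u_p=[M(p,0)]$, whence $\Phi((z-y-1)f_p(y,z))=(x-a-1)[M(p,0)]=0$ by Lemma~\ref{xxlem2.13}(3) (equivalently the relation $(x-a-1)u_p=0$ in Corollary~\ref{xxcor2.14}). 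So $\Phi$ descends to a surjective ring map $\bar\Phi\colon {\mathbb Z}[y,z]/I\to r(U)$.

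The substantive step is to prove $\bar\Phi$ is injective, and the cleanest route is a rank/basis count. First I would establish a normal form for elements of ${\mathbb Z}[y,z]/I$: since $\deg_z f_p(y,z)=p-1$ with leading coefficient $1$ in $z$, the relation $(z-y-1)f_p(y,z)\equiv 0$ is monic of degree $p$ in $z$, so every class is represented by a polynomial of $z$-degree at most $p-1$; combined with $y^p\equiv 1$, the classes of $\{y^i z^j \mid 0\le i\le p-1,\ 0\le j\le p-1\}$ span ${\mathbb Z}[y,z]/I$ as a ${\mathbb Z}$-module. Meanwhile $r(U)$ is a free abelian group on the $p^2$ basis elements $\{[M(l,r)]\mid 1\le l\le p,\ r\in{\mathbb Z}_p\}$ by Proposition~\ref{xxpro1.7}. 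Thus both sides have ${\mathbb Z}$-rank at most $p^2$, and $r(U)$ has rank exactly $p^2$; since $\bar\Phi$ is a surjection of finitely generated abelian groups from a group of rank $\le p^2$ onto a free group of rank $p^2$, it must be an isomorphism. Concretely one checks $\bar\Phi$ sends the spanning set $\{y^i z^j\}$ onto a set that ${\mathbb Z}$-spans the free group $r(U)$ of rank $p^2$; a surjection of abelian groups carrying a size-$p^2$ generating set onto a free ${\mathbb Z}$-module of rank $p^2$ is forced to be injective, so $\ker\bar\Phi=0$.

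I expect the main obstacle to be the careful bookkeeping in the normal-form argument: one must verify that reduction modulo $(z-y-1)f_p$ together with $y^p-1$ genuinely produces the $p^2$-element spanning set, i.e. that no further collapse is hidden, and that the images $\bar\Phi(y^i z^j)$ really do generate all of $r(U)$ over ${\mathbb Z}$ — this uses that $\{u_1,\dots,u_p\}=\{[M(1,0)],\dots,[M(p,0)]\}$ expresses the $[M(l,0)]$ as ${\mathbb Z}$-polynomials in $x$ of degree $\le p-1$ (Corollary~\ref{xxcor2.14}), and that multiplying by powers of $a=y$ sweeps out all the twists $[M(l,r)]$ (Lemma~\ref{xxlem2.13}(1)). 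Since all of this is parallel to the Taft-algebra computation, I would simply cite the proof of \cite[Theorem 3.10]{CVZ} for the details after indicating these points, exactly as the statement of Corollary~\ref{xxcor2.15} suggests.
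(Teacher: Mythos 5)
Your proposal is correct and follows essentially the same route as the paper, which simply defers to the proof of \cite[Theorem 3.10]{CVZ}: that argument is exactly the surjection $y\mapsto a$, $z\mapsto x$ (surjective by Lemma \ref{xxlem2.13}(4), killing $I$ by Lemma \ref{xxlem2.13}(1) and Corollary \ref{xxcor2.14}), followed by the rank count comparing the $p^2$-element spanning set $\{y^iz^j\}$ of ${\mathbb Z}[y,z]/I$ with the free ${\mathbb Z}$-basis $\{[M(l,r)]\}$ of $r(U)$ from Proposition \ref{xxpro1.7}. No further comment is needed.
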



\begin{corollary}
\label{xxcor2.16}
The Green ring $r(U)$ is isomorphic to
the Green ring $r(H_p(q))$ where $H_p(q)$ is the Taft algebra
of rank $p$ {\rm{(}}over a possibly different base field
{\rm{)}}.
\end{corollary}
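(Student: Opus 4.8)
The plan is to recognize both Green rings as the same explicit commutative ring. By Corollary \ref{xxcor2.15} we already have
$$r(U)\ \cong\ {\mathbb Z}[y,z]/I,\qquad I=\big(y^p-1,\ (z-y-1)f_p(y,z)\big),$$
and the Green ring of the rank-$p$ Taft algebra $H_p(q)$ is computed in \cite[Theorem 3.10]{CVZ} to admit the very same presentation ${\mathbb Z}[y,z]/I$. Once this is in hand, the corollary follows by transitivity of ring isomorphism. Note that the rank-$p$ Taft algebra only exists over a base field $\mathbb{K}$ containing a primitive $p$-th root of unity $q$, which forces ${\rm char}\,\mathbb{K}\neq p$; this accounts for the parenthetical ``possibly different base field'' in the statement, and the combinatorics of $r(H_p(q))$ does not depend on the particular such $\mathbb{K}$.

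To see that \cite[Theorem 3.10]{CVZ} really does yield the same ideal $I$, I would run the representation theory of $H_p(q)=\mathbb{K}\langle g,x\rangle/(g^p-1,\,x^p,\,xg-qgx)$ in parallel with Sections \ref{xxsec1}--\ref{xxsec2}. As for $U$, this is a $p^2$-dimensional non-semisimple Hopf algebra of finite representation type whose indecomposables form a family $N(l,i)$, $1\leqslant l\leqslant p$, $i\in{\mathbb Z}_p$, each uniserial with $\dim N(l,i)=l$, and the relevant tensor decompositions have exactly the same shape as those in Theorem \ref{xxthm0.2}. Setting $\bar a=[N(1,1)]$ (the class of the one-dimensional module on which $g$ acts by $q$) and $\bar x=[N(2,0)]$, one gets $\bar a^{\,p}=1$, the three-term recursion $[N(l+1,0)]=\bar x\,[N(l,0)]-\bar a\,[N(l-1,0)]$ for $2\leqslant l\leqslant p-1$, and $\bar x\,[N(p,0)]=(\bar a+1)[N(p,0)]$ -- the Taft analogues of Lemma \ref{xxlem2.13}, which are \cite[Lemma 3.8]{CVZ}. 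With these in place, the proof of Corollary \ref{xxcor2.15} applies verbatim: the assignment $y\mapsto\bar a$, $z\mapsto\bar x$ gives a surjective ring map ${\mathbb Z}[y,z]\to r(H_p(q))$ killing $y^p-1$ and, through the closed form of $f_p$ from \cite[Lemma 3.11]{CVZ} together with the $H_p(q)$-version of Corollary \ref{xxcor2.14}, also killing $(z-y-1)f_p(y,z)$; since both ${\mathbb Z}[y,z]/I$ and $r(H_p(q))$ are free abelian of rank $p^2$, the induced map is an isomorphism, and composing with the displayed isomorphism above gives $r(U)\cong r(H_p(q))$.

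The step that needs genuine care -- and which I regard as the only real obstacle -- is the bookkeeping of indexing conventions between the two settings. The naive bijection $M(l,i)\leftrightarrow N(l,i)$ need not match the two tensor-decomposition tables on the nose, and indeed Lemma \ref{xxlem2.13}(1) is already noted in the text to differ slightly from \cite[Lemma 3.8(1)]{CVZ}. This discrepancy is merely a relabelling of the ${\mathbb Z}_p$-grading; it disappears once everything is expressed through the generators $\bar a$ and $\bar x$, and it does not alter the ideal $I$. Conceptually, the part played by the class of the grouplike in $r(H_p(q))$ is taken over in $r(U)$ by $a=[S_1]$, even though $U$ has no nontrivial grouplike elements -- this is precisely why the two representation theories line up, and it is the only mildly surprising feature of the argument.
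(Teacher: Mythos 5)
Your proposal is correct and follows exactly the paper's route: both Green rings are identified with the same presentation ${\mathbb Z}[y,z]/I$ via Corollary \ref{xxcor2.15} on one side and \cite[Theorem 3.10]{CVZ} on the other, and the isomorphism follows by transitivity. The additional detail you supply about re-deriving the Taft-side presentation and reconciling the indexing conventions is sound but not needed beyond citing \cite[Theorem 3.10]{CVZ}, which is all the paper does.
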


\begin{proof} This is clear by comparing Corollary
\ref{xxcor2.15} with \cite[Theorem 3.10]{CVZ}.
\end{proof}

\section{$U$-actions on AS regular algebras}
\label{xxsec3}

Recall from \cite[p. 171]{AS} that a connected graded algebra
$T$ is called {\it Artin-Schelter regular} (or {\it AS regular},
for short) of dimension $d$ if the following hold:
\begin{enumerate}
\item[(a)]
$T$ has global dimension $d<\infty$,
\item[(b)]
$\Ext^i_T({}_T\Bbbk, {}_{T}T)=\Ext^i_{T}(\Bbbk_T,T_T)=0$ for all
$i\neq d$, where $\Bbbk=T/T_{\geq 1}$,
\item[(c)]
$\Ext^d_T({}_T\Bbbk, {}_{T}T)\cong \Ext^d_{T}(\Bbbk_T,T_T)\cong
\Bbbk(l)$ for some integer $l$,
\item[(d)]
$T$ has finite Gelfand--Kirillov dimension, see
\cite[Definition 1.7]{KWZ}.
\end{enumerate}

We will use the following general setting.

\begin{enumerate}
\item[$\bullet$]
Let $T$ be a noetherian connected graded AS regular algebra.
\item[$\bullet$]
Let $H$ be a finite-dimensional Hopf algebra acting on $T$
inner-faithfully and homogeneously (namely, each degree $i$
piece $T_i$ of $T$ is a left $H$-submodule of $T$), such that
$T$ is a left $H$-module algebra.
\end{enumerate}

For any $H$-action on $T$, the fixed subring of the action
is defined to be
$$T^H:=\{ a\in T\mid h\cdot a=\epsilon(h) a,\; \forall\; h\in H\}.$$

\begin{lemma}
\label{xxlem3.1}
Let $T$ be a graded algebra generated in degree 1 and let $U$
act on $T$ inner-faithfully. Then $V:=T_1$ is a direct sum of
indecomposable left $U$-modules, and at least one of which is
not 1-dimensional. As a consequence, $\dim_{\Bbbk} V\geq 2$.
\end{lemma}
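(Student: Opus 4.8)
The plan is to argue by contradiction. Suppose $V := T_1$ decomposes as a direct sum of indecomposable left $U$-modules, all of which are $1$-dimensional; equivalently, by Corollary \ref{xxcor1.6}(1), $V$ is a semisimple $U$-module, so $u \cdot V = 0$. Since $T$ is generated in degree $1$, every element of $T_n$ is a sum of products $v_1 v_2 \cdots v_n$ with $v_i \in V$. Because $T$ is a left $U$-module algebra and $u$ is a primitive element, $u$ acts as a derivation on $T$: $u \cdot (ab) = (u \cdot a) b + a (u \cdot b)$. It follows by an easy induction on $n$ that $u \cdot T_n = 0$ for all $n$, hence $u \cdot T = 0$.

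Next I would observe that this forces a Hopf ideal of $U$ to annihilate $T$, contradicting inner-faithfulness. Indeed, as recalled in the preliminaries, $J(U) = (u) = uU = Uu$ is the ideal generated by the normal element $u$, and it is the kernel of the Hopf algebra epimorphism $\pi \colon U \to B$. Since $u \cdot T = 0$ and $(u)$ is spanned by elements of the form $u^i w^j$ with $i \geq 1$, and each such element annihilates $T$ (write $u^i w^j = u \cdot (u^{i-1} w^j)$ and use that $u \cdot T = 0$ after noting $u^{i-1}w^j$ preserves the grading — more directly, $(u) T = UuU \cdot T = Uu \cdot (U \cdot T) \subseteq U \cdot (u \cdot T) = 0$ using that $U \cdot T_n \subseteq T_n$). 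Thus the nonzero Hopf ideal $I = \ker \pi = (u)$ satisfies $I T = 0$, contradicting the assumption that the $U$-action is inner-faithful. Therefore at least one indecomposable summand of $V$ has dimension $\geq 2$.

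Finally, the consequence $\dim_\Bbbk V \geq 2$ is immediate: an indecomposable summand of dimension $\geq 2$ already contributes at least $2$ to $\dim_\Bbbk V$. (If one wants $V \neq 0$ as part of the statement, note that $T_1 = 0$ would make $T = \Bbbk$ have global dimension $0$, not a positive-dimensional AS regular algebra, and in any case the $U$-action could not be inner-faithful on $\Bbbk$.)

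I do not expect any serious obstacle here; the only point requiring a little care is the bookkeeping showing that $u \cdot T = 0$ upgrades to $(u) \cdot T = 0$, which is where one uses both that $u$ is normal in $U$ and that the action is homogeneous (so $U$ preserves each $T_n$ and hence preserves $u \cdot T = 0$). Everything else is the derivation property of a primitive element acting on a module algebra together with the standard description of $J(U)$ recalled before Lemma \ref{xxlem1.1}.
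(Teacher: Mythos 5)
Your proof is correct and follows essentially the same route as the paper's: reduce to $u\cdot V=0$, use that the primitive element $u$ acts as a derivation to get $u\cdot T=0$, and conclude that the nonzero Hopf ideal $(u)=\ker\pi$ annihilates $T$, contradicting inner-faithfulness. The paper states this more tersely (it simply lists the three Hopf ideals of $U$ and notes $u\cdot T=0$), while you spell out the upgrade from $u\cdot T=0$ to $(u)\cdot T=0$ via normality of $u$; this is a detail the paper leaves implicit, not a difference in method.
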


\begin{proof}
Note that $U$ has three Hopf ideals, namely, $0$,
$\ker \epsilon$, and the ideal generated by $u$. If $V$
is a direct sum of 1-dimensional left $U$-modules, then $u \cdot V=0$.
Then $u \cdot T=0$ since $T$ is generated by $V$. So the
$U$-action is not inner-faithful, yielding a contradiction. The
assertion follows.
\end{proof}

Note that commutative AS regular algebras are exactly
commutative polynomial rings. Let $T$ be a commutative polynomial
ring $\Bbbk[x_1,\cdots,x_n]$ with $\deg x_i=1$ for all $i$. Since
$U$ is cocommutative, every $U$-action on $T$ is uniquely
induced and uniquely determined by its action on the degree 1 piece.
The following classifies completely all $U$-actions on noetherian AS
regular algebras of global dimension 2.

\begin{proposition}
\label{xxpro3.2}
Let $U$ act inner-faithfully on a noetherian Koszul AS regular
algebra $T$ of global dimension 2.
\begin{enumerate}
\item[(1)]
$T$ is commutative, namely, $T=\Bbbk[V]$ where $V=T_1$.
As a consequence, $V$ is a 2-dimensional indecomposable
left $U$-module.
\item[(2)]
Using the notation introduced in Convention \ref{xxcon1.4},
we write $V$ as $M(2, i)=\Bbbk x_1 \oplus \Bbbk x_2$ for
some $0\< i\< p-1$. Then the following hold.
\begin{enumerate}
\item[(2a)]
If $V=M(2, p-1)$, then $T^U=\Bbbk[x_1^p, x_2]$.
\item[(2b)]
If $V=M(2,i)$ for some $0\leq i\leq p-2$, then
$T^U=\Bbbk[x_1^p,x_2^p]$.
\end{enumerate}
\end{enumerate}
\end{proposition}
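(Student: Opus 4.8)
The plan is to break the statement into the classification of the algebra $T$ (part (1)) and then the explicit computation of the fixed rings (part (2)). For part (1), the starting point is \leref{xxlem3.1}: since $U$ acts inner-faithfully on $T$, the module $V=T_1$ is a direct sum of indecomposable $U$-modules with at least one summand of dimension $\geq 2$, so $\dim_\Bbbk V\geq 2$. On the other hand, a noetherian Koszul AS regular algebra $T$ of global dimension $2$ has Hilbert series $1/(1-t)^2$, hence $\dim_\Bbbk T_1=2$. Therefore $V$ must be a single $2$-dimensional indecomposable module, i.e. $V\cong M(2,i)$ for some $i\in\mathbb Z_p$, as in \convref{xxcon1.4}. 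It remains to show that $T$ is commutative. A Koszul AS regular algebra of global dimension $2$ with $\dim T_1=2$ is a quadratic algebra $T=\Bbbk\langle x_1,x_2\rangle/(r)$ for a single quadratic relation $r$, and up to change of basis $r$ is either $x_1x_2-x_2x_1$ (giving the commutative polynomial ring) or $x_2x_1-qx_1x_2$ with $q\neq 0,1$ or $x_2x_1-x_1x_2-x_1^2$ (the Jordan plane). The task is to rule out the last two possibilities when $V\cong M(2,i)$ carries a module-algebra $U$-action.

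For this I would use the standard basis $x_1,x_2$ of $M(2,i)$ from \convref{xxcon1.4}, so $w\cdot x_1=ix_1$, $w\cdot x_2=(i+1)x_2$, $u\cdot x_1=x_2$, $u\cdot x_2=0$. Since $u$ and $w$ are primitive, they act on $T$ as derivations. The key computation is to apply $u$ and $w$ to the defining relation $r\in T_2$ and use that the relation ideal is a $U$-submodule, equivalently that $u\cdot r$ and $w\cdot r$ lie in the span of $r$ inside the free algebra modulo nothing — more precisely, $r$ spans a $1$-dimensional subspace of $(V\otimes V)$ that must be a $U$-submodule (a trivial-type submodule up to the quadratic relation structure). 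Applying $w$ forces $r$ to be homogeneous for the $\mathbb Z_p$-grading by $w$-eigenvalue; writing $r=\alpha x_1x_2+\beta x_2x_1+\gamma x_1^2+\delta x_2^2$, the $w$-weights of $x_1x_2,x_2x_1,x_1^2,x_2^2$ are $2i+1,2i+1,2i,2i+2$, so $r$ is supported either on $\{x_1x_2,x_2x_1\}$ or is a multiple of $x_1^2$ or of $x_2^2$; the latter two are not the relation of an AS regular algebra, so $r=\alpha x_1x_2+\beta x_2x_1$ with $\alpha,\beta$ not both zero. Then applying $u$ (a derivation with $u\cdot x_1=x_2$, $u\cdot x_2=0$) gives $u\cdot r=\alpha x_2 x_2+\beta x_2 x_2=(\alpha+\beta)x_2^2$, which must again lie in $\Bbbk r=\Bbbk(\alpha x_1x_2+\beta x_2x_1)$; since $x_2^2$ is not a multiple of $\alpha x_1x_2+\beta x_2x_1$, we must have $\alpha+\beta=0$, i.e. $r$ is a scalar multiple of $x_1x_2-x_2x_1$. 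Hence $T=\Bbbk[x_1,x_2]$ is commutative.

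For part (2), $T=\Bbbk[x_1,x_2]$ and the $U$-action is the unique extension by derivations of the $M(2,i)$-action on $V$; concretely $u=\partial/\partial\text{(something)}$-type and $w$ is the locally finite (in fact diagonalizable on each graded piece) operator with $w\cdot x_1=ix_1$, $w\cdot x_2=(i+1)x_2$, extended as a derivation. Then $T^U=\{f: u\cdot f=0,\ w\cdot f=0\}$, since $\ker\epsilon$ is generated by $u$ and $w$ and an element is fixed iff it is killed by both primitives. The condition $w\cdot f=0$ selects the $\mathbb Z_p$-graded degree-$0$ part of $\Bbbk[x_1,x_2]$ with respect to the weights $(i,i+1)$ assigned to $(x_1,x_2)$: a monomial $x_1^ax_2^b$ has $w$-weight $ai+b(i+1)=(a+b)i+b\in\mathbb Z_p$. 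When $i=p-1\equiv -1$, this weight is $-a\in\mathbb Z_p$, so $w\cdot(x_1^ax_2^b)=0$ iff $p\mid a$; among such monomials, $u\cdot(x_1^ax_2^b)$ (with $u$ the derivation $x_1\mapsto x_2$, $x_2\mapsto 0$) is $a\,x_1^{a-1}x_2^{b+1}$, which vanishes iff $p\mid a$ — automatic — so $T^U=\Bbbk[x_1^p,x_2]$, giving (2a). When $0\leq i\leq p-2$, one analyzes the two linear conditions $\{w\text{-weight}\equiv 0\}$ and $\{u\cdot f=0\}$ on the monomial basis; the $u$-condition forces (after combining with the $w$-condition) both exponents divisible by $p$, yielding $T^U=\Bbbk[x_1^p,x_2^p]$, which is (2b). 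The routine part is the bookkeeping of these divisibility conditions on monomials; the main conceptual obstacle, and the step I expect to be the crux, is part (1) — specifically confirming that a Koszul AS regular algebra of global dimension $2$ admitting such a $U$-module structure on $T_1$ is forced to have the commutator as its single relation, which the weight-plus-derivation argument above settles cleanly once the normal form of the quadratic relation is in hand.
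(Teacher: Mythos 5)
Your overall strategy is the same as the paper's: reduce to $V\cong M(2,i)$ via Lemma \ref{xxlem3.1} and the Hilbert series, then determine which lines in $V\otimes V$ can serve as the one-dimensional relation space $\Bbbk r$ (a $U$-submodule since the action is homogeneous), and finally compute $T^U$ from the derivation formulas for $u$ and $w$ on monomials; your formulas agree with \eqref{E3.2.1} and your treatment of part (2) is correct. The only difference in part (1) is that the paper obtains the list of admissible lines by citing Corollary \ref{xxcor2.6} and Lemma \ref{xxlem2.2}: the socle of $V\otimes V$ is $S_{2i+1}\oplus S_{2i+2}$, spanned by $z_3=-x_1\otimes x_2+x_2\otimes x_1$ and $z_4=x_2\otimes x_2$, so $\Bbbk r$ must be $\Bbbk z_3$ or $\Bbbk z_4$, and the domain property eliminates $z_4$. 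You replace that citation by a direct weight-plus-derivation computation, which is fine in principle.

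However, your case analysis has a gap at $p=2$. You assert that, since the $w$-weights of $x_1\otimes x_2$, $x_2\otimes x_1$, $x_1\otimes x_1$, $x_2\otimes x_2$ are $2i+1$, $2i+1$, $2i$, $2i+2$, the relation $r$ is either supported on $\{x_1\otimes x_2,\,x_2\otimes x_1\}$ or is a scalar multiple of $x_1\otimes x_1$ or of $x_2\otimes x_2$. This uses $2i\neq 2i+2$ in ${\mathbb Z}_p$ and fails when $p=2$: there the two weight spaces are spanned by $\{x_1\otimes x_1,\,x_2\otimes x_2\}$ and $\{x_1\otimes x_2,\,x_2\otimes x_1\}$, so you must also handle $r=\gamma\, x_1\otimes x_1+\delta\, x_2\otimes x_2$ with $\gamma$ and $\delta$ both nonzero. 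This case cannot be dismissed on the grounds that it is ``not the relation of an AS regular algebra'': in characteristic $2$ the algebra $\Bbbk\langle x_1,x_2\rangle/(x_1^2+x_2^2)$ is isomorphic to the Jordan plane (set $z=x_1+x_2$; the relation becomes $x_2z-zx_2=z^2$), hence is a noetherian Koszul AS regular domain of global dimension two. The case must instead be excluded by the $U$-action, exactly as in your other case: $u\cdot(\gamma\, x_1\otimes x_1+\delta\, x_2\otimes x_2)=\gamma(x_1\otimes x_2+x_2\otimes x_1)$, which lies in $\Bbbk r$ only if $\gamma=0$, and then $r=\delta\, x_2\otimes x_2$ contradicts the domain property. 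With that one additional line your argument is complete and matches the paper's conclusion.
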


\begin{proof}
(1) Since the $U$-action on $T$ is inner-faithful, $V$ is
not a direct sum of two 1-dimensional simples by Lemma
\ref{xxlem3.1}. Hence $V=M(2,i)$ for some $i$, and then
$T=\Bbbk\langle V\rangle /(r)$ where $r\in V\otimes V$ is
the relation of $T$. By Lemma \ref{xxlem2.2} and Corollary
\ref{xxcor2.6}, the socle of $V\otimes V$ is two dimensional,
spanned by $z_3:=-x_1\otimes x_2+x_2\otimes x_1$ and
$z_4:=x_2\otimes x_2$. Since $\Bbbk r$ is a left $U$-module,
it must be either $\Bbbk z_3$ or $\Bbbk z_4$. Since an AS
regular algebra of global dimension two is a domain, $r$
cannot be $z_4$. Therefore $r=z_3$, and $T$ is commutative.

(2) By part (1), $T=\Bbbk[x_1,x_2]$. Since $u$ and $w$ are
primitive, both of them act on $T$ as derivatives. Then it is
easy to show that $x_1^p, x_2^p\in T^U$.

Let $V=M(2,i)$. For any $0\leq a,b\leq p-1$, it is
straightforward to check that
\begin{equation}
\label{E3.2.1}\tag{E3.2.1}
u\cdot (x_1^a x_2^b)= a x_1^{a-1} x_2^{b+1},
\quad {\text{and}} \quad
w\cdot (x_1^a x_2^b)=(a i+ b(i+1)) x_1^a x_2^b.
\end{equation}

\begin{enumerate}
\item[(2a)]
If $i=p-1$, then \eqref{E3.2.1} implies that
$x_2\in T^U$ and that $T^U=\Bbbk[x_1^p, x_2]$.
\item[(2b)]
If $i\neq p-1$, then one sees that
$T^U=\Bbbk[x_1^p, x_2^p]$ by \eqref{E3.2.1}.
\end{enumerate}
\end{proof}

For a generalization of Proposition \ref{xxpro3.2}(1),
see Proposition \ref{xxpro0.4} (and Proposition \ref{xxpro4.3}).

Next we would like to determine $U$-actions on noetherian Koszul
AS regular algebras of global dimension $3$. We will use the fact
that a noetherian connected graded algebra of global dimension three
is a domain \cite[Theorem]{Sn}.

\begin{convention}
\label{xxcon3.3}
The following are assumed for the rest of this section.
\begin{enumerate}
\item[(1)]
Let $U$ act inner-faithfully and homogeneously on a
noetherian Koszul AS regular algebra $T$ of global dimension 3.
\item[(2)]
Let $V:=T_1$ be the degree 1 piece of the algebra $T$. It is
well-known that $V$ is a 3-dimensional left $U$-module.
\item[(3)]
Let $R\subseteq V\otimes V$ be the relation space of $T$,
namely, $T=\Bbbk\langle V\rangle/(R)$. It is well-known that
$R$ is a 3-dimensional left $U$-submodule of $V\otimes V$.
\item[(4)]
If $V$ is $M(3,i)$, then we write
$V=\Bbbk x_1\oplus \Bbbk x_2\oplus \Bbbk x_3$
using the notation introduced in Convention \ref{xxcon1.4}.
\item[(5)]
If $V$ is $M(2,i)\oplus S_j$ (where $S_j=M(1,j)$), then we write
$M(2,i)=\Bbbk x_1\oplus \Bbbk x_2$ using the notation introduced
in Convention \ref{xxcon1.4} and $S_j=\Bbbk y$.
\item[(6)]
Since $T$ is noetherian Koszul AS regular of global dimension three,
the Hilbert series of $T$ is $(1-t)^{-3}$. In particular,
$\dim_{\Bbbk} T_3=10$.
\end{enumerate}
\end{convention}

There is a large class of noetherian Koszul AS regular algebras
of global dimension 3 and the classification of such algebras
over a field of positive characteristic has not been done. We can
classify all $U$-actions on noetherian Koszul AS regular algebras
of global dimension 3 because there is no inner-faithful $U$-action
on a generic AS regular algebra, see Proposition \ref{xxpro4.3}.
The basic idea in the following is to work out the left
$U$-module $R$ which is a $U$-submodule of $V\otimes V$.

\begin{lemma}
\label{xxlem3.4}
Suppose $V$ is $M(3,i)$. Let
$z_4:=-x_1\otimes x_3+x_2\otimes x_2-x_3\otimes x_1$,
$z_5:=x_2\otimes x_3-x_3\otimes x_2$ and $z_6:=-x_3\otimes x_3$.
The following hold.
\begin{enumerate}
\item[(1)]
$soc(V\otimes V)= S_{2i+2}\oplus S_{2i+3}\oplus S_{2i+4}$
where basis elements for simple modules $S_{2i+2}$, $S_{2i+3}$,
$S_{2i+4}$ respectively are $z_4$, $z_5$, $z_6$ respectively.
\item[(2)]
If $p=3$, then $V\otimes V=M(3, 2i)\oplus M(3,2i+1)
\oplus M(3,2i+2)$ where the socles of $M(3,2i)$, $M(3,2i+1)$,
$M(3,2i+2)$ are generated by $z_4$, $z_5$, $z_6$ respectively.
\item[(3)]
If $p\geq 5$, then $V\otimes V=M(1,2+2i)\oplus M(3,1+2i)\oplus M(5, 2i)$
where the socles of $M(1,2+2i)$, $M(3,1+2i)$, $M(5, 2i)$ respectively
are generated by $z_4$, $z_5$, $z_6$ respectively.
\end{enumerate}
\end{lemma}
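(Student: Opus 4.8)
The plan is to compute everything from the explicit standard basis of $V=M(3,i)$, using Convention \ref{xxcon1.4} and the already-established tensor decomposition (Lemma \ref{xxlem3.4} is just the special case $l=m=3$ of Theorems \ref{xxthm2.10} and \ref{xxthm2.11}, tensored up by $S_{2i}$ via Lemma \ref{xxlem2.2}). First I would record the $w$-eigenvalues: $w\cdot x_j=(i+j-1)x_j$, so on $V\otimes V$ the element $x_a\otimes x_b$ has $w$-eigenvalue $2i+a+b-2$. Hence the subspace $M(s):=\mathrm{span}\{x_a\otimes x_b\mid a+b=s\}$ introduced before Lemma \ref{xxlem2.3} is exactly the $w$-eigenspace for eigenvalue $2i+s-2$, with $s$ running from $2$ to $6$ and $\dim M(2)=\dim M(6)=1$, $\dim M(3)=\dim M(5)=2$, $\dim M(4)=3$.

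For part (1), the socle is $\{z\in V\otimes V\mid u\cdot z=0\}$ (since $J(U)=uU$), and by Corollary \ref{xxcor2.6}(1) applied with $l=m=3$, $r=r'=0$ the socle meets $M(s)$ nontrivially only for $s=4,5,6$, with one-dimensional intersection there. I would just exhibit the three vectors: $u\cdot z_6=0$ is immediate since $u\cdot x_3=0$; for $z_5$ one checks $u\cdot(x_2\otimes x_3-x_3\otimes x_2)=x_3\otimes x_3-x_3\otimes x_3=0$; for $z_4$ one checks $u\cdot(-x_1\otimes x_3+x_2\otimes x_2-x_3\otimes x_1)=-x_2\otimes x_3+(x_3\otimes x_2+x_2\otimes x_3)-x_3\otimes x_2=0$. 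Their $w$-eigenvalues are $2i+2$, $2i+3$, $2i+4$ respectively, so by Lemma \ref{xxlem1.2} the corresponding simple submodules are $S_{2i+2}$, $S_{2i+3}$, $S_{2i+4}$; since $\dim\mathrm{soc}=3$ by Corollary \ref{xxcor2.6}(3), this is the whole socle.

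For parts (2) and (3) I would invoke Theorems \ref{xxthm2.10}(2) and \ref{xxthm2.11}(2) directly. When $p=3$ we are in the regime $l+m=6>p$, and Theorem \ref{xxthm2.11}(2) with $l=m=3$, $r=r'=i$ gives $V\otimes V\cong M(1,2i)\oplus M(3,2i)\oplus M(3,2i+1)$... wait, I must be careful: $p-m=0$ so the first sum is empty and $l+m-p=3$, giving $\bigoplus_{i'=1}^{3}M(3,2i+i'-1)=M(3,2i)\oplus M(3,2i+1)\oplus M(3,2i+2)$, matching the claim. When $p\geq5$ we have $l+m=6\leq p$, so Theorem \ref{xxthm2.10}(2) with $l=m=3$ gives $\bigoplus_{i'=1}^{3}M(6-4+2i',2i+3-i')=M(1,2i+2)\oplus M(3,2i+1)\oplus M(5,2i)$, matching the claim. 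It remains only to match each indecomposable summand with the socle generator named in (1): each of $M(1,2i+2)$, $M(3,2i+1)$, $M(5,2i)$ (resp. $M(3,2i)$, $M(3,2i+1)$, $M(3,2i+2)$ when $p=3$) has one-dimensional socle, and by Lemma \ref{xxlem1.5}(1) that socle is the simple module $S_{\text{(top index)}}$, namely $S_{2i+2}$, $S_{2i+3}$, $S_{2i+4}$ respectively; comparing with part (1) pins down which summand contains $z_4$, $z_5$, $z_6$.

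The only mildly delicate point — hardly an obstacle — is bookkeeping with the index shifts in Theorems \ref{xxthm2.10}–\ref{xxthm2.11}: one must confirm that the twist by $S_{2i}$ (from $M(3,i)=S_i\otimes M(3,0)$, so $M(3,i)\otimes M(3,i)=S_{2i}\otimes(M(3,0)\otimes M(3,0))$) sends the socle summand $S_{s-2}$ of $M(3,0)\otimes M(3,0)$ to $S_{2i+s-2}$, which is consistent with the $w$-eigenvalue computation above. Everything else is a direct verification on basis vectors, and no separate argument beyond the cited results is needed.
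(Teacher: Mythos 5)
Your proof of part (1) and the overall strategy for parts (2) and (3) are essentially the paper's own: the authors prove (1) by citing Lemma~\ref{xxlem2.2} and Corollary~\ref{xxcor2.6} (your $z_4,z_5,z_6$ are exactly the vectors $z_s$, $s=4,5,6$, of Corollary~\ref{xxcor2.6}(1) with $l=m=3$, twisted by $S_{2i}$), and they prove (2) and (3) from the tensor decomposition results, matching summands to socle generators via the distinct $w$-eigenvalues just as you do.

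There are, however, two boundary-case mis-citations in your treatment of (2) and (3). First, for $p=3$ you invoke Theorem~\ref{xxthm2.11}(2), but that theorem is stated under the hypothesis $l\leqslant m<p$, which fails when $m=3=p$; the correct reference is Proposition~\ref{xxpro2.8} (or Theorem~\ref{xxthm0.2}(2), which allows $m=p$), and this is what the paper uses --- your formula nevertheless comes out right because the first direct sum is empty. Second, for $p\geq 5$ you assert $l+m=6\leqslant p$ and apply Theorem~\ref{xxthm2.10}(2) uniformly, but at $p=5$ one has $6>5$, so Theorem~\ref{xxthm2.10} does not apply and you must use Theorem~\ref{xxthm2.11}(2) instead (as the paper does, splitting $p>5$ from $p=5$); again the final decomposition $M(1,2i+2)\oplus M(3,2i+1)\oplus M(5,2i)$ is unchanged because at $p=5$ the largest summand is exactly $M(p,2i)$, so the two formulas coincide. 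Both slips are harmless in outcome but should be repaired, since as written you apply theorems outside their stated hypotheses.
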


\begin{proof}
(1) This follows from Lemma \ref{xxlem2.2} and
Corollary \ref{xxcor2.6}.

(2) This follows from part (1) and Proposition \ref{xxpro2.8}.

(3) This follows from part (1) and Theorem \ref{xxthm2.10}(2)
when $p>5$ and Theorem \ref{xxthm2.11}(2) when $p=5$.
\end{proof}

\begin{lemma}
\label{xxlem3.5}
Suppose $V=M(3,i)=\Bbbk x_1\oplus \Bbbk x_2\oplus \Bbbk x_3$.
Let $W=x_3\otimes V+V\otimes x_3$.
\begin{enumerate}
\item[(1)]
$R\cap (x_3 \otimes V)=R\cap (V\otimes x_3)=0$ and
$\dim_{\Bbbk} (R\cap W)\leq 2$.
\item[(2)]
$\dim_{\Bbbk} (R\cap W)\neq 1$.
\item[(3)]
$R$ does not contain three linearly
independent elements of the form
$$\begin{aligned}
f_1 &:= x_1\otimes x_2 + \xi_1,\\
f_2&:= x_2\otimes x_1 +\xi_2,\\
f_3&:= x_2\otimes x_2 +\xi_3,
\end{aligned}
$$
where $\xi_1,\xi_2,\xi_3\in W$.
\item[(4)]
$\dim_{\Bbbk} (R\cap W)\neq 0$.
\item[(5)]
$\dim_{\Bbbk} (R\cap W)=2$, $x_3$ is a normal element in $T$, and
one of the following occurs:
\begin{enumerate}
\item[(5a)]
$p\geq 3$, $T$ is commutative.
\item[(5b)]
$p=3$ and the relations of $T$ are
$$x_2x_1-x_1x_2+x_3^2=0, \quad
x_3x_1-x_1x_3=0, \quad
x_3x_2-x_2x_3=0.$$
\end{enumerate}
\end{enumerate}
\end{lemma}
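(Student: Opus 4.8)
The plan rests on three inputs: that $T$ is a domain (a noetherian connected graded algebra of global dimension three is a domain), that $\dim_{\Bbbk}T_3=10$ by Convention~\ref{xxcon3.3}(6), and, above all, that $R$ is a $U$-submodule of $V\otimes V$, which is tightly controlled by the $w$-weight grading of Lemma~\ref{xxlem1.1}. Throughout I would set $\bar V:=V/\Bbbk x_3\cong M(2,i)$ (legitimate since $\Bbbk x_3=\soc V$ is a submodule), so that $W=\ker\big(V\otimes V\to\bar V\otimes\bar V\big)$ is a $5$-dimensional $U$-submodule containing the $3$-dimensional submodules $x_3\otimes V$ and $V\otimes x_3$, carrying only the $w$-weights $2i+2,2i+3,2i+4$, and satisfying $W\cap\soc(V\otimes V)=\Bbbk z_5\oplus\Bbbk z_6$ (because $z_4\notin W$ by Lemma~\ref{xxlem3.4}).

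Parts (1) and (3) are quick. If $0\neq x_3\otimes v\in R$ then $x_3v=0$ in the domain $T$ with $x_3\neq0$, forcing $v=0$; hence $R\cap(x_3\otimes V)=R\cap(V\otimes x_3)=0$, and then $R\subseteq W$ is impossible since $R+(x_3\otimes V)$ would be a $6$-dimensional subspace of $W$, giving $\dim(R\cap W)\le2$. For (3): $x_1\otimes x_2$ (resp.\ $x_2\otimes x_1$) has $w$-weight $2i+1$ while $W$ has no weight-$(2i+1)$ component, so the weight-$(2i+1)$ homogeneous component of $f_1$ (resp.\ $f_2$) equals $x_1\otimes x_2$ (resp.\ $x_2\otimes x_1$) and lies in $R$; this gives the relation $x_1x_2=0$ (or $x_2x_1=0$) in the domain $T$, a contradiction.

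For (2) and (4) I would show $\dim(R\cap W)=2$ directly, using $\dim(R\cap W)\le2$ from (1). Since $\soc(R\cap W)=(R\cap W)\cap\soc(V\otimes V)\subseteq\Bbbk z_5\oplus\Bbbk z_6$ and $z_6=-x_3\otimes x_3\in x_3\otimes V$ cannot lie in $R$, the only possibilities with $\dim(R\cap W)\le1$ are $R\cap W=0$ or $R\cap W=\Bbbk z_5$, and in those cases (1) gives $\dim_{\Bbbk}\big(T_2/(x_3T_1+T_1x_3)\big)=1+\dim(R\cap W)\le2$. I would then rule these out using that $T$ is AS regular of global dimension three: $T/(x_3)$ is generated by the $2$-dimensional $\bar V$ but has degree-two part of dimension $\le2$, whereas a quotient of the GKdim-$3$ domain $T$ by the nonzerodivisor $x_3$ cannot be that small — a short rewriting argument shows such a quotient has at most linear growth, which forces $\dim_{\Bbbk}T_3<10$. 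This is the technical heart and the step I expect to require the most care; a fallback is to enumerate the finitely many $U$-submodule configurations of $R$ allowed by (1), Lemma~\ref{xxlem3.4} and Corollary~\ref{xxcor2.12}, and to compute $\dim(R\otimes V+V\otimes R)$ in each. Together with (1) this gives $\dim(R\cap W)=2$.

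Finally for (5): with $\dim(R\cap W)=2$ and $z_6\notin R$, the module $R\cap W$ has simple socle $\Bbbk z_5\cong S_{2i+3}$, hence is uniserial $\cong M(2,2i+2)$; computing $\Hom_U(M(2,2i+2),W)$ through the splitting $W\cong M(3,2i+2)\oplus M(2,2i+2)$ shows it is $1$-dimensional, so $R\cap W$ is forced to be $\Bbbk(x_3\otimes x_1-x_1\otimes x_3)\oplus\Bbbk z_5$. This yields the relations $x_3x_1-x_1x_3=0$ and $x_3x_2-x_2x_3=0$, so $x_3$ is central, hence normal. The remaining relation spans the $1$-dimensional submodule $R/(R\cap W)\subseteq\bar V\otimes\bar V$, and Corollary~\ref{xxcor2.12} gives $\soc(\bar V\otimes\bar V)=\Bbbk(\bar x_1\otimes\bar x_2-\bar x_2\otimes\bar x_1)\oplus\Bbbk(\bar x_2\otimes\bar x_2)$, where $\bar x_j$ is the image of $x_j$. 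If the third relation maps into $\Bbbk(\bar x_2\otimes\bar x_2)$, a weight-plus-$u$-stability analysis pins it down so that, using centrality of $x_3$, it becomes $x_2^2=2x_1x_3$; but then $T/(x_3)\cong\Bbbk\langle x_1,x_2\rangle/(x_2^2)$ has exponential growth while a central hyperplane section of an AS regular algebra of dimension three must have GK-dimension $2$, so this branch is discarded. If it maps into $\Bbbk(\bar x_1\otimes\bar x_2-\bar x_2\otimes\bar x_1)$, the relation is $x_1\otimes x_2-x_2\otimes x_1+\omega$ with $\omega\in W$; its weight-$(2i+1)$ component lies in $R$, and since $W$ has no weight-$(2i+1)$ part when $p\ge5$ this component is exactly $x_1\otimes x_2-x_2\otimes x_1$, giving the commutative polynomial ring (case (5a)). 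When $p=3$ the congruence $2i+4\equiv2i+1\pmod 3$ permits an extra summand $\gamma\,x_3\otimes x_3$; for $\gamma=0$ one again gets the polynomial ring, while for $\gamma\neq0$ rescaling $x_3$ normalizes $\gamma$ to produce the presentation in (5b) (which is genuinely AS regular, as one checks from the PBW basis $\{x_3^cx_2^bx_1^a\}$, the single overlap $x_1x_2x_3$ resolving).
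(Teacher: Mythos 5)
Your overall architecture (weight decomposition, identifying $R\cap W$ inside $W\cong M(2,2i+2)\oplus M(3,2i+2)$, then reading off $R/(R\cap W)$ inside $\bar V\otimes\bar V$) is attractive, and your part (5) is genuinely cleaner than the paper's: pinning down $R\cap W$ as the \emph{unique} copy of $M(2,2i+2)$ in $W$ via $\Hom_U(M(2,2i+2),W)=\Bbbk$ replaces the paper's coefficient chase through \eqref{E3.5.1}--\eqref{E3.5.6}, and your elimination of the $\bar z_4$-branch by the Hilbert series of $T/(x_3)$ is legitimate there because $x_3$ is by then known to be central. Part (1) is also fine. But there are two genuine gaps.

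The fatal one is parts (2) and (4), which you correctly identify as the technical heart but do not actually prove. The implication ``$\dim_{\Bbbk}\bigl(T/(x_3)\bigr)_2\le 2$, hence $T/(x_3)$ has at most linear growth, hence $\dim_{\Bbbk}T_3<10$'' is a non sequitur: $(x_3)$ is the \emph{two-sided} ideal generated by an element you have not yet shown to be normal, so all you know is $\dim_{\Bbbk}T_3=\dim_{\Bbbk}(x_3)_3+\dim_{\Bbbk}\bigl(T/(x_3)\bigr)_3$ with $6\le\dim_{\Bbbk}(x_3)_3\le 10$; a small (even zero-dimensional) quotient is perfectly compatible with $\dim_{\Bbbk}T_3=10$ (compare $\Bbbk\langle x,y\rangle/(y)\cong\Bbbk[x]$, which says nothing about the growth of the free algebra). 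There is no general bound on the growth of a quotient by a non-normal nonzerodivisor. The paper's proof shows the difficulty is real: assuming $R\cap W=0$ with $p=3$ one is led to explicit candidate presentations, and the last survivor ($x_1x_3+x_3x_1=x_2^2$, $x_1x_2+x_2x_1=x_3^2$, $x_3x_2+x_2x_3=-x_1^2$) is killed only by computing its Koszul dual and showing it is not Frobenius; the case $\dim_{\Bbbk}(R\cap W)=1$ requires a separate Ore-extension argument. Your ``fallback'' enumeration is precisely that multi-page computation, and it is not carried out.

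Second, your part (3) fails at $p=3$ (the one prime where the exceptional algebra (5b) lives): there $2i+4\equiv 2i+1$, so $W$ \emph{does} have a weight-$(2i+1)$ component, namely $\Bbbk\,x_3\otimes x_3$, contradicting your own first paragraph. The weight-$(2i+1)$ component of $f_1$ is then $x_1\otimes x_2+\gamma_1\,x_3\otimes x_3$, which gives $x_1x_2=-\gamma_1 x_3^2$ rather than $x_1x_2=0$, and no contradiction with $T$ being a domain. The gap is patchable --- e.g.\ the two weight-$(2i+1)$ elements $x_1\otimes x_2+\gamma_1 x_3\otimes x_3$ and $x_2\otimes x_1+\gamma_2 x_3\otimes x_3$ have images under $u$ that are linearly independent in the at-most-one-dimensional weight space $R[2i+2]$, or one can use the paper's argument that $\soc(R)$ would be $2$-dimensional yet embed into $\Bbbk z_4$ --- but as written the step is wrong.
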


\begin{proof} Some non-essential computations are skipped.
Since $V=M(3,i)$, $p\geq 3$.

(1) If $R\cap (x_3\otimes V)\neq 0$, then there is a relation
of the form $x_3 \otimes v=0$ for some
$0\neq v\in V$. This contradicts the fact that $T$ is a domain.
Therefore $R\cap (x_3\otimes V)= 0$. By symmetry,
$R\cap (V\otimes x_3)= 0$.

The inclusion $R\cap W\to W$ induces an injective map
$R\cap W\to W/(V\otimes x_3)$.
Therefore $\dim_{\Bbbk} R\cap W\leq \dim_{\Bbbk} W/(V\otimes x_3)=2$.

(2) Suppose to the contrary that $\dim_{\Bbbk} R\cap W=1$.
Let $f$ be a basis element in $R\cap W$ and write it as
$$f=a x_3\otimes x_1 + b x_3\otimes x_2 + v\otimes x_3$$
for some $v\in V$. By part (1), either $a$ or $b$ is nonzero.
Suppose first that $a\neq 0$. Then $u\cdot f= a x_3\otimes x_2
+w\otimes x_3$ for some $w\in V$. So $\{f,u\cdot f\}$
are linearly independent elements in $R\cap W$, yielding a
contradiction. Therefore $a=0$, and in this case we may assume
that $b=1$ and
$$f=x_3\otimes x_2+ a_1 x_1\otimes x_3- q x_2\otimes x_3 +c_1
x_3\otimes x_3.$$
Note that
$$(2i+3-w)\cdot f= a_1 x_1\otimes x_3- c_1 x_3\otimes x_3$$
which must be zero as $\dim_{\Bbbk} R\cap W=1$. Therefore $a_1=c_1=0$
and $f=x_3\otimes x_2-q x_2 \otimes x_3$. Since $\Bbbk f$ is
in the socle of $V\otimes V$, by Lemma \ref{xxlem3.4},
we have that $f_1:=z_5=x_2\otimes x_3- x_3 \otimes x_2\in R\cap W$.

Now let $g\in R\setminus W$ and write it as
$$g=a_{11} x_1\otimes x_1 +a_{12} x_1\otimes x_2+ a_{21}
x_2\otimes x_1+a_{22}x_2\otimes x_2 +\phi_0$$
where $\phi_0\in W$. Since $\dim_{\Bbbk} R/R\cap W=
\dim_{\Bbbk} R-\dim_{\Bbbk} (R\cap W)=2$, $u^2\cdot g\in W$.
By a computation, $u^2 \cdot g= 2 a_{11} x_2\otimes x_2+\phi_1$
where $\phi_1\in W$. Thus $a_{11}=0$. In this case
$$u\cdot g=(a_{12}+a_{21}) x_2\otimes x_2 +a_{12} x_1\otimes x_3+ 
a_{21} x_3\otimes x_1+\phi_2$$
where $\phi_2\in W$.
If $3(a_{12}+a_{21})\neq 0$, then 
$$\begin{aligned}
u^2\cdot g&\equiv (2a_{12}+a_{21})x_2\otimes x_3
+(a_{12}+2a_{21})x_3\otimes x_2 \mod \Bbbk x_3\otimes x_3\\
&\equiv 3(a_{12}+a_{21})x_2\otimes x_3 \mod \Bbbk x_3\otimes x_3
+\Bbbk z_5
\end{aligned}$$
as $u^2\cdot (a_{22} x_2\otimes x_2+\phi_0)\equiv 0$
modulo $\Bbbk x_3\otimes x_3$. Then $u^2\cdot g$ and
$z_5$ are linearly independent elements in $R\cap W$,
yielding a contradiction. Therefore $3(a_{12}+a_{21})=0$.

Since $R/R\cap W$ has dimension two, either $a_{12}\neq 0$
or $a_{21}\neq 0$. By symmetry, we assume that $a_{12}\neq 0$.
So we can assume that $a_{12}=-1$.
We need to consider the following two cases:

Case (2a): $a_{21}=-a_{12}=1$.
Then $R$ has two elements of the form
$$\begin{aligned}
f_2&=x_2\otimes x_1-x_1\otimes x_2
+a_1 x_1\otimes x_3+b_1 x_3\otimes x_1+ c_1 x_2\otimes x_3+d_1 x_3\otimes x_3,\\
f_3&=x_2\otimes x_2
+a_2 x_1\otimes x_3+b_2 x_3\otimes x_1+ c_2 x_2\otimes x_3+d_2 x_3\otimes x_3.
\end{aligned}
$$
It is easy to see that
$$u\cdot f_2=x_3\otimes x_1-x_1\otimes x_3
+a_1 x_2\otimes x_3+b_1 x_3\otimes x_2+ c_1 x_3\otimes x_3$$
which is in $R\cap W$ but not in $\Bbbk f_1$. This yields a
contradiction.

Case (2b): $q:=a_{21}\neq -a_{12}=1$. Since $3(a_{12}+a_{21})=0$, we obtain
that $p=3$. We also have relations similar to $f_2$ and $f_3$ in Case (2a).
Since $R$ is a left $U$-module, we can choose $f_2,f_3$ so that 
$(2i+1-w)\cdot f_2=0=(2i+2-w)\cdot f_3$. In this case, we have
$$\begin{aligned}
f_2&=qx_2\otimes x_1-x_1\otimes x_2 +d x_3\otimes x_3,\\
f_3&=x_2\otimes x_2
+a x_1\otimes x_3+b x_3\otimes x_1.
\end{aligned}
$$
By easy calculation, 
$$\begin{aligned}
u\cdot f_2&=(q-1) x_2\otimes x_2 -x_1\otimes x_3+q x_3\otimes x_1,\\
u\cdot f_3&=(1+a)x_2\otimes x_3+ (1+b) x_3\otimes x_2.
\end{aligned}
$$
Since $u\cdot f_2$ and $u\cdot f_3$ are relations of $T$, by comparing
$u\cdot f_2$ with $f_3$ and $u\cdot f_3$ with $f_1$ and using the 
fact that $p=3$, we obtain 
that $a=-(q-1)^{-1}$ and $b=q(q-1)^{-1}$. This algebra can be built
from $\Bbbk[x_2,x_3]$ by adding $x_1$ with relations $f_2$ and $f_3$.
So if $T$ is AS regular (consequently having Hilbert series $(1-t)^{-3}$),
it must be an Ore extension of the form $\Bbbk[x_2,x_3][x_1;\sigma,\delta]$
for some automorphism $\sigma$ and $\sigma$-derivation $\delta$
of the polynomial ring $\Bbbk[x_2,x_3]$. Now an easy ring 
theory argument shows that the existence of $(\sigma,\delta)$
forces that $q=1$, yielding a contradiction. 

(3) Suppose to the contrary that $R$ contains three linearly
independent elements of the form
$$\begin{aligned}
f_1 &= x_1\otimes x_2 + \xi_1,\\
f_2&= x_2\otimes x_1 +\xi_2,\\
f_3&= x_2\otimes x_2 +\xi_3,
\end{aligned}
$$
where $\xi_1,\xi_2,\xi_3\in W$.
Then $u\cdot f_1$ and $u\cdot f_2$ must be equal to
$f_3$. Then $R/(\Bbbk f_3)$ is a direct sum of
two 1-dimensional simples. Then $R$ is not indecomposable.
Then $R$ contains at least two of elements $z_4,z_5,z_6$
by Lemma \ref{xxlem3.4}(1). Clearly $z_5$ and $z_6$ are
not a linear combination of these $f_i$. This
yields a contradiction.

(4) On the contrary we suppose that $R\cap W=0$.

First we assume that $p>3$.
If $f=x_1\otimes x_1+\sum_{(i,j)\neq (1,1)} a_{i,j} x_i
\otimes x_j$ is in $R$, then $u^3\cdot f=3(x_2\otimes x_3+x_3\otimes x_2)
+c x_3\otimes x_3$ for some $c\in\Bbbk$,
which is a nonzero element in $R\cap W$, a contradiction.
So $R$ does not contain any element of the form
$x_1\otimes x_1+\sum_{(i,j)\neq (1,1)} a_{i,j} x_i \otimes x_j$.
Since $R\cap W=0$, $R$ must contain three elements of the
form given in part (3). This contradicts part (3).

Now we assume that $p=3$.
By Lemma \ref{xxlem3.4}(1), one of the relations is $z_4\in R$
as $z_5$ and $z_6$ are in $W$.
By part (3), we have a relation $f\in R$ of the form
$$f=x_1\otimes x_1 + {\text{higher terms}}.$$
Then $u\cdot f= x_1\otimes x_2+x_2\otimes x_1 + {\text{higher terms}}$.
Modulo $z_4$ and $u\cdot f$, we can assume
that
$$f= x_1\otimes x_1+ a x_1\otimes x_2 + b x_1\otimes x_3
+c x_3\otimes x_1 + d x_2\otimes x_3 + e x_3\otimes x_2 +g x_3\otimes x_3.$$
Then
$$(2i-w)\cdot f
=-a x_1\otimes x_2 -2 b x_1\otimes x_3- 2c x_3\otimes x_1 -g x_3\otimes x_3.$$
Since $(2i-w)\cdot f\in R$ and it is not a nonzero linear
combination of $z_4$, $f$ and $u\cdot f$, it must be zero. Therefore
$a=b=c=g=0$ and
$$f= x_1\otimes x_1+ d x_2\otimes x_3 + e x_3\otimes x_2.$$
Consequently, we have two more relations
$$\begin{aligned}
u\cdot f&=x_1\otimes x_2+x_2\otimes x_1 +(d+e) x_3\otimes x_3,\\
-u^2\cdot f&=-x_1\otimes x_3 +x_2\otimes x_2 -x_3\otimes x_1.
\end{aligned}
$$
Since $T$ is a domain one of $d$ and $e$ is nonzero.
By symmetry, we can assume $e\neq 0$. After changing a basis
element, we may assume that $e=1$. Next we consider two cases
dependent on whether $d+e$ is zero or not.

Case 1: $d+e=0$.  Since $e=1$, $d=-1$. So three relations of
$T$ are, after we omit the $\otimes$ symbol:
$$x_3x_2=x_2x_3-x_1^2, \quad
x_2x_1=-x_1x_2, \quad x_3x_1=-x_1x_3+x_2^2.$$
Following ideas from Bergman's Diamond lemma \cite{Be}, the next
computation is referred to as {\it resolving the overlap ambiguity} of
$x_3(x_2x_1)=(x_3x_2)x_1$. Using the order $x_1<x_2<x_3$ in the
algebra $T$, we have
$$\begin{aligned}
(x_3x_2)x_1&=(x_2x_3-x_1^2)x_1=x_2(x_3x_1)-x_1^3\\
&=x_2(-x_1x_3+x_2^2)-x_1^3=x_1x_2x_3+x_2^3-x_1^3,\\
x_3(x_2x_1)&=x_3(-x_1x_2)=-(x_3x_1)x_2\\
&=-(-x_1x_3+x_2^2)x_2=x_1(x_3x_2)-x_2^3\\
&=x_1(x_2x_3-x_1^2)-x_2^3=x_1x_2x_3-x_1^3-x_2^3.
\end{aligned}
$$
Since $x_3(x_2x_1)=(x_3x_2)x_1$, we obtain that
$x_1x_2x_3+x_2^3-x_1^3=x_1x_2x_3-x_1^3-x_2^3$, or equivalently,
$x_2^3=0$. But $T$ is a domain, so this case cannot happen.

Case 2: $d+e=d+1\neq 0$. In this case, the relations of $T$ are
$$
x_3x_2=-d x_2x_3-x_1^2, \quad
x_3^2=-(d+e)^{-1}(x_2x_1+x_1x_2), \quad x_3x_1=-x_1x_3+x_2^2.$$
By resolving the overlap ambiguity of $(x_3^2) x_3=x_3(x_3^2)$
(details omitted, same as below), we obtain that
$$(d-1) x_2^3 =(d-1) (x_2x_1+x_1x_2)x_3.$$
By resolving the overlap ambiguity of $(x_3)^2 x_1=x_3(x_3x_1)$,
we obtain that
$$(d^2-1)x_2^2x_3=(d+(d+1)^{-1})x_2x_1^2-(1+(d+1)^{-1})x_1^2x_2.$$
By resolving the overlap ambiguity of $(x_3^2)x_2=x_3(x_3x_2)$,
we obtain that
$$(1+d^2(d+1)^{-1})x_2^2x_1
=(d-1)x_1^2x_3+(1+(d+1)^{-1})^{-1}x_1x_2^2
+(d+1)^{-1}(1-d^2)x_2x_1x_2.$$
If $d\neq 1$, by using the relations of degree 2 and three relations
coming from resolving the overlap ambiguities, then $T_3$ is a $\Bbbk$-span of
$$\{x_1^3, x_1^2x_2, x_1x_2x_1, x_1x_2^2,x_2x_1^2,x_2x_1x_2,
x_1^2x_3, x_1x_2x_3,x_2x_1x_3\}.$$
As a consequence, $\dim_{\Bbbk} T_3=9<10$, yielding
a contradiction. Therefore $d=1$. We already have $e=1$,
so the three relations of $T$ become
$$\begin{aligned}
x_1x_3+x_3x_1&=x_2^2,\\
x_1x_2+x_2x_1&=-2 x_3^2 =x_3^2,\\
x_3x_2+x_2x_3&=-x_1^2.
\end{aligned}
$$
Note that, after setting $x_1\to -x_1$ in
the above algebra, the new algebra, denoted by $T'$, has relations
$$\begin{aligned}
x_1x_3+x_3x_1&=-x_2^2,\\
x_1x_2+x_2x_1&=-x_3^2,\\
x_3x_2+x_2x_3&=-x_1^2.
\end{aligned}
$$
Its Koszul dual, denoted by $B$, is a commutative algebra
generated by $y_1,y_2,y_3$ subject to relations:
$$\begin{aligned}
y_1y_3-y_3y_1=0, \quad & y_1y_3-y_2^2=0,\\
y_1y_2-y_2y_1=0, \quad & y_1y_2-y_3^2=0,\\
y_3y_2-y_2y_3=0, \quad & y_2y_3-y_1^2=0.
\end{aligned}
$$
It is easy to see that there is a surjective algebra map
from $B$ to $\Bbbk [t]$ by setting $y_i\to t$ for $i=1,2,3$.
Therefore $B$ is not finite dimensional Frobenius.
By \cite[Proposition 5.10]{Sm} or \cite[Corollary D]{LPWZ},
$T'$ is not AS regular. As a consequence, $T$ is not AS regular,
yielding a contradiction. Thus the assertion follows.

(5) By parts (1,2,4), $\dim_{\Bbbk} R\cap W=2$. By the proof of part (1),
the natural $\Bbbk$-linear map $\phi: R\cap W\to W/(V\otimes x_3)$
is an isomorphism. Let $f_1,f_2$ be two linearly independent elements
in $R\cap W$. Since $\phi(f_1)$ and $\phi(f_2)$ are linearly
independent, we may assume $\phi(f_1)=x_3\otimes x_1$ and
$\phi(f_2)=x_3\otimes x_2$. In other words, we can write
\begin{align}
\notag
f_1&:= x_3\otimes x_1+ a_1 x_1\otimes x_3+ b_1 x_2 \otimes x_3+
c_1 x_3\otimes x_3,\\
\notag
f_2&:= x_3\otimes x_2+ a_2 x_1\otimes x_3+ b_2 x_2 \otimes x_3+
c_2 x_3\otimes x_3.
\end{align}
Therefore $x_3 T=Tx_3$ and hence, $x_3$ is a normal element
in $T$.

Since $x_3$ is normal and $\Bbbk x_3$ is a left $U$-module,
the $U$-action on $T$ induces a natural $U$-action on
$Z:=T/(x_3)$ where $Z$ is a noetherian Koszul AS regular
algebra of global dimension two. Note that the degree 1 piece
of $Z$ is $Z_1=M(2,i)$, whence the $U$-action on $Z$ is
inner-faithful. By Proposition \ref{xxpro3.2}(1), $Z$ is
commutative, so $x_2 x_1-x_1 x_2=0$ in $Z$. This implies that
the third relation in $T$ is
\begin{align}
\notag
f_3 &:= x_2 \otimes x_1-x_1 \otimes x_2 +
a_3 x_1 \otimes x_3 + b_3 x_2 \otimes x_3 +c_3
x_3\otimes x_3.
\end{align}
Easy computations show that
\begin{align}
\label{E3.5.1}\tag{E3.5.1}
u\cdot f_1&=x_3\otimes x_2+a_1 x_2\otimes x_3+ b_1 x_3\otimes x_3,\\
\label{E3.5.2}\tag{E3.5.2}
u\cdot f_2&=x_3\otimes x_3+a_2 x_2 \otimes x_3 +b_2 x_3 \otimes x_3,\\
\label{E3.5.3}\tag{E3.5.3}
u\cdot f_3&=x_3\otimes x_1 -x_1\otimes x_3+a_3 x_2\otimes x_3+b_3 x_3\otimes x_3,\\
\label{E3.5.4}\tag{E3.5.4}
(2i+2-w)\cdot f_1&=-b_1 x_2\otimes x_3-2c_1 x_3\otimes x_3,\\
\label{E3.5.5}\tag{E3.5.5}
(2i+3-w)\cdot f_2&=a_2 x_1\otimes x_3 -c_2 x_3\otimes x_3,\\
\label{E3.5.6}\tag{E3.5.6}
(2i+1-w)\cdot f_3&=-a_3 x_1\otimes x_3 -2 b_3 x_2\otimes x_3 -3 c_3 x_3\otimes x_3.
\end{align}
Since $R$ is a $U$-submodule, the above elements are in
$R\cap W$. Equations \eqref{E3.5.4}-\eqref{E3.5.6} imply that
$b_1=c_1=a_2=c_2=a_3=b_3 =3c_3=0$. Equation \eqref{E3.5.3} says
that $f_1=x_3\otimes x_1-x_1\otimes x_3$ and that $a_1=-1$,
Equation \eqref{E3.5.1} says that $f_2=x_3\otimes x_2-x_2\otimes x_3$
and that $b_2=-1$. Combining all these we obtain two cases:
\begin{enumerate}
\item[(a)]
$p\geq 3$, $T$ is commutative, or
\item[(b)]
$p=3$ and relations of $T$ are, up to a change of basis,
$$x_2\otimes x_1-x_1\otimes x_2+x_3\otimes x_3=
x_3\otimes x_1-x_1\otimes x_3=x_3\otimes x_2-x_2\otimes x_3=0.$$
\end{enumerate}
This finishes the proof.
\end{proof}

We will recycle the letter $z_4$ with a different meaning in
the next lemma. Similar to Lemma \ref{xxlem3.4}, we have

\begin{lemma}
\label{xxlem3.6}
Suppose $V=M(2,i)\oplus S_j=\Bbbk x_1\oplus \Bbbk x_2
\oplus \Bbbk y$ for some $i,j \in {\mathbb Z}_p$.
\begin{enumerate}
\item[(1)]
$V\otimes V=[M(2,i)\otimes M(2,i)]\oplus [M(2,i)\otimes S_j
\oplus S_j\otimes M(2,i)]\oplus S_j\otimes S_j$.
\item[(2)]
$soc(V\otimes V)=[S_{2i+1}\oplus S_{2i+2}]
\oplus [S_{i+1+j}\oplus S_{j+i+1}]
\oplus S_{2j}$ with corresponding basis
elements $z_3=-x_1\otimes x_2+x_2\otimes x_1$
and $z_4=x_2\otimes x_2$ for
$S_{2i+1}\oplus S_{2i+2}$, $x_2\otimes y, y\otimes x_2$
for $S_{i+1+j}\oplus S_{j+i+1}$, and
$y\otimes y$ for $S_{2j}$.
\end{enumerate}
\end{lemma}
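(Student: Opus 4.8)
The plan is to deduce both parts from the tensor decompositions already recorded in Section \ref{xxsec2}, with no fresh tensor-product computation needed. Part (1) is purely formal: the functor $-\ot-$ is additive in each variable, so writing $V=M(2,i)\oplus S_j$ and expanding $V\ot V$ splits it into the four summands displayed in the statement. Since the socle of a finite-dimensional $U$-module is the sum of the socles of its indecomposable summands, and each indecomposable has $1$-dimensional socle by Lemma \ref{xxlem1.5}(1), the socle of $V\ot V$ respects this direct-sum decomposition; this reduces part (2) to analysing the socle of each of the four pieces.

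For part (2) I would treat the three essentially different pieces in turn. For $S_j\ot S_j$, Lemma \ref{xxlem2.2} gives $S_j\ot S_j\cong S_{2j}$, which is simple with basis vector $y\ot y$, so this piece is its own socle. For $M(2,i)\ot S_j$ and $S_j\ot M(2,i)$, Lemma \ref{xxlem2.2} gives $M(2,i+j)$ in both cases, whose socle is $1$-dimensional and isomorphic to $S_{i+j+1}$ by Lemma \ref{xxlem1.5}(1); inspecting the $u$-action on $\{x_1\ot y,\,x_2\ot y\}$ and on $\{y\ot x_1,\,y\ot x_2\}$ shows these socles are spanned by $x_2\ot y$ and $y\ot x_2$ respectively, and a $w$-eigenvalue check matches them to $S_{i+1+j}$ and $S_{j+i+1}$. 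For $M(2,i)\ot M(2,i)$, Corollary \ref{xxcor2.12}(2) directly gives $S_{2i+1}\oplus M(3,2i)$ when $p\geq 3$; when $p=2$, $M(2,i)$ is projective and Proposition \ref{xxpro2.8} with $l=2$ gives $M(2,2i)\oplus M(2,2i+1)=M(2,0)\oplus M(2,1)$ (since $2i\equiv 0$ in $\mathbb{Z}_2$). In either case Lemma \ref{xxlem1.5}(1) yields a $2$-dimensional socle with simple constituents $S_{2i+1}$ and $S_{2i+2}$. Finally I would verify directly that $u\cdot z_3=u\cdot z_4=0$ while $w$ acts on $z_3,z_4$ by $2i+1,2i+2$; then $z_3,z_4$ are linearly independent socle vectors and hence span $\soc(M(2,i)\ot M(2,i))$, matching $z_3$ with $S_{2i+1}$ and $z_4$ with $S_{2i+2}$. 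Assembling the four pieces gives part (2).

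I do not anticipate any real obstacle: the argument is essentially bookkeeping. The only points needing a little care are matching each explicit vector ($z_3$, $z_4$, $x_2\ot y$, $y\ot x_2$, $y\ot y$) to the correct simple module via its $w$-eigenvalue, and the separate handling of $p=2$, where $M(2,i)$ is projective and Corollary \ref{xxcor2.12} is unavailable, so one falls back on Proposition \ref{xxpro2.8}.
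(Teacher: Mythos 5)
Your proposal is correct and follows essentially the route the paper intends: the paper states Lemma~\ref{xxlem3.6} without proof, remarking only that it is ``similar to Lemma~\ref{xxlem3.4},'' whose proof likewise reduces to Lemma~\ref{xxlem2.2} together with the socle computations of Section~\ref{xxsec2}. Your extra care in splitting off the $p=2$ case (where Corollary~\ref{xxcor2.12} does not apply and one uses Proposition~\ref{xxpro2.8} instead) and in matching each socle vector to its simple constituent by its $w$-eigenvalue is exactly the bookkeeping the paper leaves implicit, and all of it checks out.
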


\begin{lemma}
\label{xxlem3.7}
Suppose $V=M(2,i)\oplus S_j$ as in Lemma {\rm{\ref{xxlem3.6}}}.
Then $u^2 \cdot R=0$ and $\dim_{\Bbbk} u\cdot R\leq 1$.
\end{lemma}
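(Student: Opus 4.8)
The plan is to show that $u^2$ collapses $V\ot V$ into a one-dimensional subspace, then invoke that $T$ is a domain to conclude $u^2\cdot R=0$, and finally deduce the bound on $u\cdot R$ by a rank count.

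First I would compute the $u^2$-action on $V\ot V$. Writing $V=M(2,i)\oplus S_j=\Bbbk x_1\oplus\Bbbk x_2\oplus\Bbbk y$ with standard basis as in Convention~\ref{xxcon3.3}, we have $u\cdot x_1=x_2$ and $u\cdot x_2=u\cdot y=0$, so $u^2$ annihilates $V$. Since $u$ is primitive,
$$u^2\cdot(v\ot v')=(u^2\cdot v)\ot v'+2\,(u\cdot v)\ot(u\cdot v')+v\ot(u^2\cdot v')=2\,(u\cdot v)\ot(u\cdot v')$$
for all $v,v'\in V$, and as $u\cdot V\subseteq\Bbbk x_2$ this gives $u^2\cdot(V\ot V)\subseteq\Bbbk(x_2\ot x_2)$ (and it is $0$ when $p=2$). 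Since $R\subseteq V\ot V$ is a $U$-submodule (Convention~\ref{xxcon3.3}), $u^2\cdot R$ is a subspace of $R$ lying inside $\Bbbk(x_2\ot x_2)$, so $\dim_{\Bbbk}(u^2\cdot R)\leq 1$.

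The key step is to exclude $u^2\cdot R\neq 0$. If $u^2\cdot R=\Bbbk(x_2\ot x_2)$, then $x_2\ot x_2\in R$, hence $x_2^2=0$ in $T_2=(V\ot V)/R$; but $x_2$ is a nonzero element of $T_1$ and $T$, being a noetherian connected graded algebra of global dimension three, is a domain \cite[Theorem]{Sn}. This contradiction forces $u^2\cdot R=0$.

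Finally, since $u^2\cdot R=0$, the subspace $u\cdot R$ of $R$ is contained in $\ker(u|_R)$, where $u|_R$ denotes the $\Bbbk$-linear map $r\mapsto u\cdot r$ on $R$. By rank-nullity, $\dim_{\Bbbk}R=\dim_{\Bbbk}\ker(u|_R)+\dim_{\Bbbk}(u\cdot R)\geq 2\dim_{\Bbbk}(u\cdot R)$; since $\dim_{\Bbbk}R=3$, this gives $\dim_{\Bbbk}(u\cdot R)\leq 1$. The only delicate point in all of this is pinning down $u^2\cdot(V\ot V)$ exactly in the first step; once that inclusion is in hand, the domain property of $T$ and a one-line rank count finish the argument, so I do not anticipate a real obstacle.
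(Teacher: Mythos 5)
Your proof is correct and follows essentially the same strategy as the paper's: derive a contradiction from $x_2\otimes x_2\in R$ versus the domain property of $T$, then finish with a rank count on $l_u\colon R\to R$. The only difference is how the containment $u^2\cdot(V\otimes V)\subseteq\Bbbk(x_2\otimes x_2)$ is obtained: the paper decomposes $V\otimes V=M(3,2i)\oplus N$ with $u^2\cdot N=0$ (via Lemma \ref{xxlem3.6}(1) and Corollary \ref{xxcor2.12}) and notes that $u^2$ of any element lands in the socle $\Bbbk z_4$ of the $M(3,2i)$ summand, whereas you compute directly from $\Delta(u^2)=u^2\otimes 1+2\,u\otimes u+1\otimes u^2$ together with $u^2\cdot V=0$ and $u\cdot V\subseteq\Bbbk x_2$. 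Your route is slightly more elementary, bypasses the tensor-decomposition machinery, and makes the $p=2$ case transparent (the coefficient $2$ vanishes); both arguments are equally valid.
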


\begin{proof}
Suppose to the contrary that $u^2 \cdot R\neq 0$. Then the subspace
$M(2,i)\otimes M(2,i)$
in Lemma \ref{xxlem3.6}(1) has a direct summand that is a
three-dimensional indecomposable module $M(3, 2i)$ (using Corollary
\ref{xxcor2.12}) with socle $\Bbbk z_4=\Bbbk x_2\otimes x_2$. As a
consequence $p\geq 3$. In this case, Lemma \ref{xxlem3.6}(1)
implies that $V\otimes V=M(3,2i)\oplus N$ where $u^2\cdot N=0$.
Pick $f\in R$ such that $u^2\cdot f\neq 0$ and write
$f=f_0+f_1$ where $f_0\in M(3,2i)$ and $f_1\in N$.
Then $u^2 \cdot f_0 =u^2\cdot f\in R$. Since $u^2 \cdot f_0$
is in the socle of $M(3,2i)$, we obtain that $u^2\cdot f$
is a nonzero element in $\Bbbk z_4$. Thus $x_2^2=0$ in $T$,
which contradicts the fact that every noetherian Koszul AS regular
algebra of global dimension three is a domain. Therefore
$u^2\cdot R=0$.

Since $u^2\cdot R=0$, then $soc(R)$ has dimension at least 2.
Therefore $u\cdot R\cong R/\ker (l_u)$ has dimension at most 1,
where $l_u: R\to R$ is a left multiplication by $u$.
\end{proof}

We need the following lemma.

\begin{lemma}
\label{xxlem3.8}
Let ${\text{char}}\; \Bbbk=2$.
\begin{enumerate}
\item[(1)]
Suppose $A$ is generated by $x_1,x_2,y$ and subject to the relations
$$\begin{aligned}
0&= x_1^2 +c_1 x_2 y+ d_1 y x_2+ e_1 y^2,\\
0&=x_1 x_2+x_2 x_1,\\
0&=x_2^2 +c_3 x_2 y+ d_3 yx_2+ e_3 y^2.
\end{aligned}
$$
Then $A$ is a noetherian Koszul AS regular algebra of global dimension
3 if and only if the parameters satisfy the following conditions
$$d_1=c_1, \; d_3=c_3, \; 
c_3^2-e_3\neq 0,\; c_1e_3-c_3e_1\neq 0.$$
\item[(2)]
Suppose $A$ is generated by $x_1,x_2,y$ and subject to the relations
$$\begin{aligned}
0&=y  x_2-q x_2  y,\\
0&=x_1  x_1 +a x_1  y+
d y^2+e x_2^2,\\
0&= x_1  x_2+x_2  x_1+ a x_2  y.
\end{aligned}
$$
Then $A$ is a noetherian Koszul AS regular algebra of global dimension
3 if and only if the parameters satisfy the following conditions.
$$a=0, q=1, d\neq 0.$$
In this case we can assume that $d=1$.
\item[(3)]
Suppose $A$ is generated by $x_1,x_2,y$ and subject to the relations
$$\begin{aligned}
0&=y  x_2-q x_2  y +c_1 y^2,\\
0&=x_1  x_1 +c x_2  y+
d y^2+e x_2^2,\\
0&= x_1  x_2+x_2  x_1.
\end{aligned}
$$
Then $A$ is a noetherian Koszul AS regular algebra of global dimension
3 if and only if the parameters satisfy the following conditions
$$q=1, d\neq 0, c_1 c=0.$$
As a consequence, we may assume $d=1$, and $(c_1,c)=(1,0)$ or $(0,1)$
by a change of basis.
\end{enumerate}
\end{lemma}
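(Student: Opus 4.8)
The plan is to treat each algebra $A$ in (1)--(3) as a candidate noetherian Koszul AS regular algebra of global dimension three and to pin down the admissible parameters by the method already used for Lemma~\ref{xxlem3.5}: Bergman's Diamond Lemma \cite{Be} to control the Hilbert series, together with the Koszul-duality test for AS regularity.

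First I would fix, for each part, a degree-lexicographic order on the words in $x_1,x_2,y$ so that the three defining relations become reduction rules with quadratic leading monomials --- the order $x_1>x_2>y$ makes the leading monomials $x_1^2,x_1x_2,x_2^2$ in part (1), and $x_1>y>x_2$ makes them $x_1^2,x_1x_2,yx_2$ in parts (2) and (3) (in part (3) after a preliminary split according to whether $c_1$ or $c$ vanishes). The overlap ambiguities are then few in number ($x_1^3$, $x_1^2x_2$, $x_1x_2^2$, $x_2^3$, and those involving $yx_2$); for each one I would reduce in both ways and require the difference to reduce to zero. Each ambiguity either holds identically, forces a polynomial identity among the parameters, or forces a new (cubic) Gröbner basis element; after adjoining such elements and iterating, the process terminates, and the net effect is: the ambiguities close up into a finite Gröbner basis \emph{exactly} when the parameters satisfy the conditions in the statement, and on that locus the resulting set of normal monomials has $\dim_\Bbbk A_3=10$, hence Hilbert series $(1-t)^{-3}$ in agreement with Convention~\ref{xxcon3.3}(6). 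In this analysis the conditions $d_1=c_1$, $d_3=c_3$, $q=1$, $a=0$ come out of the lower-degree or ``easy'' ambiguities, while $d\neq 0$ and the determinantal inequalities $c_1e_3-c_3e_1\neq 0$, $c_3^2-e_3\neq 0$ come out of the requirement that certain cubic Gröbner basis elements actually appear --- the naive count of normal monomials overcounts $\dim_\Bbbk A_3$, and these cubic relations are what cut it down to $10$. Conversely, whenever a stated condition fails one finds either $\dim_\Bbbk A_3\neq 10$, contradicting the Hilbert series of a noetherian Koszul AS regular algebra of global dimension three, or a resolved ambiguity collapsing to a relation $x_i^{\,n}=0$, contradicting the fact that such an algebra is a domain by \cite{Sn}. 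This disposes of the ``only if'' direction.

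For the ``if'' direction I would, on the good locus, upgrade ``correct Hilbert series'' to ``AS regular''. In the lucky cases this is immediate from an explicit iterated Ore extension structure obtained after a linear change of variables: in part (2), for example, the substitution $y\mapsto y+\sqrt{e}\,x_2$ turns the relations into $x_1^2=y^2$, $x_1x_2=x_2x_1$, $yx_2=x_2y$, exhibiting $A\cong C[x_2]$ with $C=\Bbbk\langle x_1,y\rangle/(x_1^2-y^2)$ a two-dimensional AS regular algebra, so that $A$ is at once noetherian, Koszul, and AS regular of global dimension three. For the remaining, genuinely ``twisted'' algebras --- notably the entire family in part (1), which one checks is not an iterated Ore extension --- I would instead compute the quadratic dual $A^{!}$ (three generators, six relations, total dimension $8$, Hilbert series $(1+t)^3$) and verify directly that it is Frobenius, that is, that the multiplication pairings $A^{!}_i\times A^{!}_{3-i}\to A^{!}_3$ are nondegenerate; granting this, $A$ is Koszul (one checks $A^{!}$ is $G$-quadratic, or that the Gröbner basis of $A$ consists only of its defining relations together with the cubic elements found above) and AS regular of global dimension three by \cite[Proposition 5.10]{Sm} or \cite[Corollary D]{LPWZ}, exactly as in the proof of Lemma~\ref{xxlem3.5}(4); the branches of part (3) are handled the same way. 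Noetherianity then follows since a connected graded Koszul AS regular algebra of global dimension three is a noetherian domain.

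The main obstacle is twofold. The Diamond-Lemma bookkeeping is delicate in characteristic $2$ precisely because the three naive quadratic leading monomials overcount $A_3$: one has to identify correctly which cubic Gröbner basis elements resolution of the $x_1^3$- and $x_2^3$-type ambiguities forces, and trace how exactly those elements carve out the determinantal conditions and the hypotheses $d\neq 0$, $d_i=c_i$. The more serious difficulty is the ``if'' direction for the non-Ore algebras --- part (1) above all --- where one genuinely has to carry out the $8$-dimensional Koszul-dual computation, check the Frobenius pairing, and establish Koszulity of $A$ itself; it is in this step that the characteristic-$2$ case analysis becomes heaviest.
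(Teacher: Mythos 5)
The paper does not print a full proof of Lemma~\ref{xxlem3.8}; it only records the three ideas (a)--(c) following the statement. Your proposal reproduces ideas (a) and (b) essentially verbatim: a Diamond-Lemma computation forcing $\dim_\Bbbk A_3=10$ to extract the parameter constraints, and the Frobenius test on the Koszul dual via \cite[Proposition 5.10]{Sm} or \cite[Corollary D]{LPWZ} to certify AS regularity. To that extent the strategy is the intended one, and your observation that part (2) collapses, after $y\mapsto y+\sqrt{e}\,x_2$, to $C\otimes\Bbbk[x_2]$ with $C=\Bbbk\langle x_1,y\rangle/(x_1^2-y^2)$ is a clean shortcut for that case.

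The divergence, and the one genuine gap, is in the paper's idea (c), which you replace. For the non-Ore families --- all of part (1) and the branches of part (3) with $c_1c_3$-type terms present --- you establish Koszulity and AS regularity and then assert that ``noetherianity then follows since a connected graded Koszul AS regular algebra of global dimension three is a noetherian domain.'' That implication is not free: noetherianity of $3$-dimensional regular algebras is the substance of \cite{ATV2}, proved via point schemes and twisted homogeneous coordinate rings, and the present paper deliberately does not lean on that machinery in positive characteristic (it invokes \cite{Sn} only to get ``domain'' \emph{from} ``noetherian,'' not the other way around). The authors instead route noetherianity through the graded skew Clifford algebra criterion of \cite[Theorem 4.2]{CV}, which is tailor-made for the part (1) family ($x_1^2$ and $x_2^2$ equal to quadratic forms in the remaining variables, $x_1x_2+x_2x_1=0$): base-point-freeness of the associated quadric system is exactly what produces the determinantal conditions $c_3^2-e_3\neq 0$ and $c_1e_3-c_3e_1\neq 0$ \emph{and} delivers noetherian AS regularity in one stroke. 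To close your argument you would need either to carry out that Cassidy--Vancliff verification, or to justify the characteristic-free applicability of the \cite{ATV2} noetherianity theorem, or to exhibit each remaining algebra as a finite module over a noetherian (e.g.\ central or normal) graded subring; as written, the noetherianity of the part (1) algebras is asserted rather than proved.
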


To save some space, we omit the proof of Lemma \ref{xxlem3.8} as it takes
a few pages. Here are a list of ideas used in the proof.

\begin{enumerate}
\item[(a)]
We use the fact that the Hilbert series of $A$ must be $(1-t)^{-3}$ if 
$A$ is noetherian Koszul AS regular of global dimension three.  We will
resolve the overlap ambiguitiy of relations (ideas from Bergman's Diamond
lemma \cite{Be}) to make sure that $\dim A_3=10$. The condition
$\dim A_3=10$ forces some constraints on the parameters.
\item[(b)]
We study the Koszul dual of the algebra. Note that $A$
is Koszul AS regular if and only if the Koszul dual of
$A$ is Koszul and Frobenius, see \cite[Proposition 5.10]{Sm} or
\cite[Corollary D]{LPWZ}. Both Koszul and Frobenius
properties put more constraints on the parameters.
\item[(c)]
We use ideas in \cite[Theorem 4.2]{CV} to show that
$A$ is noetherian Koszul AS regular.
\end{enumerate}

\begin{lemma}
\label{xxlem3.9}
Retain the hypothesis in Lemma {\rm{\ref{xxlem3.7}}}.
Recycle the letter $W$ for $y\otimes V+V\otimes y$.
\begin{enumerate}
\item[(1)]
$R\cap (y\otimes V)=R\cap (V\otimes y)=0$ and
$\dim_{\Bbbk} (R\cap W)\leq 2$.
\item[(2)]
If $\dim_{\Bbbk} (R\cap W)=2$, then $y$ is a normal element.
\item[(3)]
Suppose $\dim_{\Bbbk} (R\cap W)=1$. Then one of the following occurs.
\begin{enumerate}
\item[(3a)]
$j=i+2$ and the relations of $T$ are
$$
y x_2- x_2 y=0,\quad
x_1 x_2- x_2 x_1 +c_2 x_2 y+ d_2 y^2=0,\quad
x_2^2 + y x_1-x_1y +d_3 y^2=0
$$
where $c_2\neq 0$ or $d_3\neq 0$ only if $p=2$ and $d_2\neq 0$
only if $p=3$.
\item[(3b)]
$p=2$, $i=j$ and the relations of $T$ are
$$
yx_2+ x_2y=0,\quad
x_1^2 + y^2+e x_2^2=0,\quad
x_1 x_2+x_2 x_1=0,
$$
where $e\in \Bbbk$.
\item[(3c)]
$p=2$, $i\neq j$ and the relations of $T$ are
$$
y x_2+ x_2 y +b y^2=0,\quad
x_1^2 +c x_2 y+ y^2+e x_2^2,\quad
x_1 x_2+x_2 x_1=0,
$$
with $(b,c)$ being $(0,1)$ or $(1,0)$.
\end{enumerate}
\item[(4)]
If $R$ contains three linearly
independent elements of the form
$$\begin{aligned}
f_1 &= x_1\otimes x_2 + \xi_1,\\
f_2&= x_2\otimes x_1 +\xi_2,\\
f_3&= x_2\otimes x_2 +\xi_3,
\end{aligned}
$$
where $\xi_1,\xi_2,\xi_3\in W$, then $i+1=j$, and, up
to a change of basis, the above relations are
$$x_1x_2 + x_1 y- y x_1=0,\quad
x_2x_1 + x_1 y- y x_1=0,\quad
x_2^2 + x_2y- yx_2=0.
$$
\item[(5)]
Suppose $R\cap W=0$ and $R$ is not in the situation of
part (4). Then $p=2$, $i\neq j$, and $R$ is of the following cases:
\begin{enumerate}
\item[(5a)]
$c_1\in \Bbbk$,
$$
x_1^2+c_1(x_2y+yx_2)+y^2=0,\quad
x_1x_2+x_2x_1=0,\quad
x_2^2+x_2y+yx_2=0.
$$
\item[(5b)]
$e_1\in \Bbbk$,
$$
x_1^2+ x_2y+yx_2+e_1 y^2=0,\quad
x_1x_2+x_2x_1=0,\quad
x_2^2+y^2=0.
$$
\end{enumerate}
\end{enumerate}
\end{lemma}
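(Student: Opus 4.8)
The plan is to exploit that the line $\Bbbk y=S_j$ is a $U$-submodule of $V$, so that $y\otimes V$, $V\otimes y$, and $W=y\otimes V+V\otimes y$ are all $U$-submodules of $V\otimes V$. Consequently $R\cap W$ is a $U$-submodule of $R$ and $R/(R\cap W)$ embeds as a $U$-module in $(V\otimes V)/W$. Two standing facts will be used throughout: $T$ is a domain (every noetherian connected graded algebra of global dimension three is, \cite{Sn}), which forbids any relation lying inside $y\otimes V$ or $V\otimes y$; and $T$ must have Hilbert series $(1-t)^{-3}$ and Frobenius Koszul dual, which for the characteristic $2$ families is precisely what Lemma~\ref{xxlem3.8} decides and which in odd characteristic is tested by resolving overlap ambiguities to check $\dim_{\Bbbk}T_3=10$. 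Moreover, by Lemma~\ref{xxlem3.7} the $U$-module $R$ is either a sum of three $1$-dimensional modules or $M(2,\cdot)\oplus M(1,\cdot)$, so only one relation can fail to lie in $\mathrm{soc}(V\otimes V)$; this keeps the number of free parameters small. The argument is a case analysis on $d:=\dim_{\Bbbk}(R\cap W)\in\{0,1,2\}$.

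First I would dispatch parts (1) and (2). The domain property gives $R\cap(y\otimes V)=R\cap(V\otimes y)=0$; composing the inclusion $R\cap W\hookrightarrow W$ with the projection onto the $2$-dimensional space $W/(V\otimes y)$ is then injective, which proves part (1). For part (2), if $d=2$ the same reasoning applied to both $W/(V\otimes y)$ and $W/(y\otimes V)$ gives two isomorphisms, so $R\cap W$ contains relations expressing $yx_1,yx_2$ inside $T_1y$ and $x_1y,x_2y$ inside $yT_1$; hence $yT=Ty$ and $y$ is normal.

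For part (3) ($d=1$), the submodule $R\cap W$ is simple, hence killed by $u$, so its generator $f$ lies in $\mathrm{soc}(V\otimes V)\cap W$, which by Lemma~\ref{xxlem3.6}(2) is spanned by $x_2\otimes y$, $y\otimes x_2$, $y\otimes y$. The domain property forces the coefficients of $x_2\otimes y$ and $y\otimes x_2$ in $f$ to be nonzero, and comparing $w$-eigenvalues (Lemma~\ref{xxlem1.1}) allows a $y\otimes y$ term only under a congruence on $i,j$. The two remaining relations come from the $2$-dimensional module $R/(R\cap W)\hookrightarrow(V\otimes V)/W$, whose $U$-structure is read off from the action of $u$ on the residues of the $x_p\otimes x_q$; a $w$-weight count on those relations is what produces the constraints $j=i+2$, or $p=2$ with $i=j$ or $i\neq j$, and then the domain property, $\dim_{\Bbbk}T_3=10$, and Lemma~\ref{xxlem3.8}(1),(3) isolate exactly the families (3a), (3b), (3c).

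Parts (4) and (5) treat $d=0$, i.e. $R\hookrightarrow(V\otimes V)/W$, a $4$-dimensional $U$-module with basis the residues of $x_p\otimes x_q$, $p,q\in\{1,2\}$. The hypothesis of part (4) says exactly that the image of $R$ omits $\overline{x_1\otimes x_1}$; then from $u\cdot(x_1\otimes x_2)=u\cdot(x_2\otimes x_1)=x_2\otimes x_2$ and $R\cap W=0$ one gets $u\cdot f_1=u\cdot f_2=f_3$ and $u\cdot f_3=0$, a $w$-weight count forces $j=i+1$, and normalizing yields the displayed relations. Otherwise the image of $R$ contains $\overline{x_1\otimes x_1}$; passing to the weight-$2i$ part of $R$ (which is again a $U$-submodule) produces a relation $x_1\otimes x_1+\eta$ with $\eta$ in $W$, and applying $u^2$ gives $2\,x_2\otimes x_2$ plus an element of $\mathrm{soc}(V\otimes V)\cap W$; since $x_2^2\neq0$ in $T$, a weight-and-characteristic count forces $p=2$ and $i\neq j$, after which the remaining relations are pinned down and Lemma~\ref{xxlem3.8}(1) singles out (5a) and (5b). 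The hard part throughout is the final step of each case: once the $U$-module and $w$-weight constraints have cut the relation space down to a handful of parameters, one still has to decide, characteristic by characteristic, which parameter values give an AS regular algebra --- the delicate Hilbert-series-and-Koszul-dual bookkeeping that Lemma~\ref{xxlem3.8} is designed to carry out.
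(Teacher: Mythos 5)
Your overall strategy is the same as the paper's: reduce everything to the $U$-submodule $R\cap W$, split into the cases $\dim_{\Bbbk}(R\cap W)=2,1,0$, use the domain property of $T$ to kill relations inside $y\otimes V$ or $V\otimes y$, read off constraints from the $u$-action and the $w$-weights, and finish each branch with the Hilbert-series/Koszul-dual tests packaged in Lemma \ref{xxlem3.8}. Parts (1), (2) and (4) are handled exactly as in the paper, and your observation that the generator of a one-dimensional $R\cap W$ must lie in ${\rm soc}(W)={\rm span}\{x_2\otimes y,\,y\otimes x_2,\,y\otimes y\}$ with both ``mixed'' coefficients nonzero is a clean (and correct) repackaging of the paper's computation in part (3). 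One small citation slip: the family (3b) is decided by Lemma \ref{xxlem3.8}(2), not \ref{xxlem3.8}(1); part (1) of that lemma is only used in your case (5).

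There is, however, one genuine gap, in part (5): you assert that ``a weight-and-characteristic count forces $p=2$ \emph{and} $i\neq j$.'' The weight count does force $p=2$ (via $u^2\cdot g=2\,x_2\otimes x_2$ and $x_2^2\neq 0$), but it does \emph{not} exclude $i=j$. When $p=2$ and $i=j$ all of the module-theoretic constraints (weights, $u$-invariance of $R$, the domain condition that no relation has rank one) are satisfiable, and after normalization one is left with the one-parameter family
$$x_1^2+a\,x_1y+y^2=0,\qquad x_1x_2+x_2x_1+a\,x_2y=0,\qquad x_2^2+y^2=0,$$
which must be eliminated by a genuine AS-regularity obstruction: the paper computes its Koszul dual $B$ and shows $y_3^2\,B_1=0$, so $B$ is not Frobenius and hence $T$ is not AS regular. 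Your closing remark that ``the hard part is deciding which parameter values give an AS regular algebra'' gestures at this kind of check, but as written your argument attributes the dichotomy $i\neq j$ to a weight computation that cannot deliver it; followed literally, your classification in part (5) would retain this spurious $i=j$ family. The fix is exactly the Koszul-dual/Frobenius argument (or an equivalent $\dim_{\Bbbk}T_3=10$ failure) applied to that family.
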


\begin{proof}
(1) It follows from the proof of Lemma \ref{xxlem3.5}(1)
by replacing $x_3$ by $y$.

(2) The first paragraph of the proof of Lemma
\ref{xxlem3.5}(5) works well for this situation
after replacing $x_3$ by $y$.

(3)
Let $f$ be a basis element in $R\cap W$ and write it as
$$f=a y\otimes x_1 + b y \otimes x_2 + v\otimes y $$
for some $v\in V$.
Suppose that $a\neq 0$. Then $u\cdot f= a y \otimes x_2
+w\otimes y $ for some $w\in V$. So $\{f,u\cdot f\}$
are linearly independent elements in $R\cap W$, yielding a
contradiction. Therefore $a=0$, and in this case we may assume
that $b=1$ and
$$f=y \otimes x_2+ a_1 x_1\otimes y - q x_2\otimes y  +c_1
y \otimes y $$
for some scalars $a_1, q, c_1$. Note that
$$u\cdot f= a_1 x_2\otimes y$$
which must be zero as $\dim_{\Bbbk} R\cap W=1$. Therefore $a_1=0$
and we obtain the first relation of $T$,
$$f_1=y \otimes x_2-q x_2 \otimes y + c_1 y\otimes y$$ where
$c_1\neq 0$ only if $i+1=j$.

Now let $g\in R\setminus W$ and write it as
$$g=a_{11} x_1\otimes x_1 +a_{12} x_1\otimes x_2+ a_{21}
x_2\otimes x_1+a_{22}x_2\otimes x_2 +\phi_0$$
where $\phi_0\in W$.
By a computation, $u^2 \cdot g= 2 a_{11} x_2\otimes x_2+\phi_1$
where $\phi_1\in W$. By Lemma \ref{xxlem3.7}, $u^2\cdot g=0$.
Thus $2a_{11}=0$. Then we have the following two cases to consider.

Case 1: $a_{11}=0$. In this case
$u\cdot g=(a_{12}+a_{21}) x_2\otimes x_2 +\phi_2$ where $\phi_2\in W$.
Since $\dim_{\Bbbk} R/(R\cap W)=2$, either $a_{12}$ or $a_{21}$ is nonzero.
We may assume $a_{12}=1$ by symmetry. If $a_{12}+a_{21}\neq 0$ ,
we have two other relations
$$\begin{aligned}
f_2&=x_1\otimes x_2+a_{21}x_2\otimes x_1 +a x_1\otimes y
     + b y\otimes x_1 +c x_2 \otimes y+ d y\otimes y,\\
(1+a_{21})^{-1} &u\cdot f_2=f_3=x_2\otimes x_2 +(1+a_{21})^{-1} a x_2\otimes y
+(1+a_{21})^{-1} b y\otimes x_2.
\end{aligned}
$$
The subalgebra of $T$ generated by $x_2$ and $y$ subject to relations
$f_1$ and $f_3$ is not a domain. This contradicts the fact that $T$
is a domain. Therefore $a_{12}+a_{21}=0$, or equivalently,
$a_{12}=1$ and $a_{21}=-1$.  Now we have three relations of the form
$$\begin{aligned}
f_1&=y \otimes x_2-q x_2 \otimes y + c_1 y\otimes y,\\
f_2&=x_1\otimes x_2- x_2\otimes x_1 +a_2 x_1\otimes y
     + b_2 y\otimes x_1 +c_2 x_2 \otimes y+ d_2 y\otimes y,\\
f_3&=x_2\otimes x_2 + a_3 x_1\otimes y
     + b_3 y\otimes x_1 +c_3 x_2 \otimes y+ d_3 y\otimes y.
\end{aligned}
$$
Since $T$ is a domain, it may not have two relations only involving
$x_2$ and $y$. Thus $a_3 x_1\otimes y+b_3 y\otimes x_1\neq 0$.
This implies that $u\cdot f_3=a_3 x_2\otimes y+b_3 y\otimes x_2\neq 0$.
Therefore $u\cdot f_3$ and $f_1$ are linearly dependent. Replacing
$y$ by $b_3 y$, we may assume that $b_3=1$. Then $a_3=-q$ and $c_1=0$.
Similarly, we can get $a_2=-q b_2$. After rearranging, using the
fact that $R$ is a $U$-module, we have $j=i+2$ and
$$\begin{aligned}
f_1&=y \otimes x_2-q x_2 \otimes y,\\
f_2&=x_1\otimes x_2- x_2\otimes x_1 +c_2 x_2 \otimes y+ d_2 y\otimes y,\\
f_3&=x_2\otimes x_2 + (y \otimes x_1-q x_1\otimes y)
+d_3 y\otimes y
\end{aligned}
$$
where $c_2\neq 0$ or $d_3\neq 0$ only if $p=2$ and $d_2\neq 0$
only if $p=3$. Finally by resolving the overlap ambiguity of
$(x_1x_2)y=x_1(x_2y)$ with order $y<x_2<x_1$ (details are
omitted, but similar to one given in the proof of Lemma \ref{xxlem3.5}(4)),
we obtain that $q=1$. So we obtain part (3a). In this case it is easy to
see that $T$ is an Ore extension $\Bbbk[x_2,y][x_1;\delta]$.

Case 2: $a_{11}\neq 0$ (so we may assume that $a_{11}=1$) and $p=2$.
Let $f_2=(2i+1-w)\cdot g$ and $f_3=u\cdot f_2$, we have
$$\begin{aligned}
f_2&=x_1\otimes x_1 +a x_1\otimes y +b y\otimes x_1 +c x_2\otimes y+
d y\otimes y+e x_2\otimes x_2,\\
f_3&= x_1\otimes x_2+x_2\otimes x_1+ a x_2\otimes y+ b y\otimes x_2,
\end{aligned}
$$
where $a\neq 0$ or $b\neq 0$ only if $i=j$. Replacing
$x_1$ by $x_1+by$, we can assume that $b=0$. So we have three
relations
$$\begin{aligned}
f_1&=y\otimes x_2-q x_2\otimes y +c_1 y\otimes y,\\
f_2&=x_1\otimes x_1 +a x_1\otimes y +c x_2\otimes y+
d y\otimes y+e x_2\otimes x_2,\\
f_3&= x_1\otimes x_2+x_2\otimes x_1+ a x_2\otimes y,
\end{aligned}
$$
where $c_1\neq 0$ only if $i\neq j$ and $a\neq 0$ only if $i=j$.

If $i=j$, then $c_1=0=c$. In this case we are exactly in the situation of
Lemma \ref{xxlem3.8}(2). By Lemma \ref{xxlem3.8}(2),
$a=0$, $q=1$ and $d\neq 0$ (and we can assume $d=1$ by changing a basis
element). Therefore we obtain (3b) by setting
$d=1$.

If $i\neq j$, then $a=0$. In this case we are exactly in the situation of
Lemma \ref{xxlem3.8}(3). By Lemma \ref{xxlem3.8}(3), $q=1, d\neq 0, c_1c=0$.
By setting $d=1$ and renaming $c_1$ to $b$ and changing
basis elements if necessary , we obtain (3c).

(4) Writing $\xi_i$ out
explicitly, we have the following three linearly
independent elements in $R$
$$\begin{aligned}
f_1 &= x_1\otimes x_2 + a_1 x_1\otimes y+ b_1 y\otimes x_1
       +c_1 x_2\otimes y+ d_1 y\otimes x_2+e_1 y\otimes y,\\
f_2&= x_2\otimes x_1 +a_2 x_1\otimes y+ b_2 y\otimes x_1
       +c_2 x_2\otimes y+ d_2 y\otimes x_2+e_2 y\otimes y,\\
f_3&= x_2\otimes x_2 +a_3 x_1\otimes y+ b_3 y\otimes x_1
       +c_3 x_2\otimes y+ d_3 y\otimes x_2+e_3 y\otimes y.
\end{aligned}
$$

Using the fact that $R\cap W=0$, it is easy to see that
\begin{equation}
\label{E3.9.1}\tag{E3.9.1}
u\cdot f_1=f_3, \quad u\cdot f_2=f_3, \quad u\cdot f_3=0.
\end{equation}
Similarly, we have
\begin{equation}
\notag
(2i+1-w)\cdot f_1= (2i+1-w)\cdot f_2= (2i+2-w)\cdot f_3=0.
\end{equation}
By \eqref{E3.9.1}, we obtain that
$a_1=a_2=c_3$, $b_1=b_2=d_3$, $a_3=b_3=e_3=0$.
By part (1), both $c_3$ and $d_3$ are nonzero. By using
$(2i+2-w)\cdot f_3=0$, we obtain that $i+1=j$. Under
this assumption, $(2i+1-w)\cdot f_1= (2i+1-w)\cdot f_2=0$
imply that
$$c_1=d_1=e_1=0=c_2=d_2=e_2.$$
Therefore we have the following relations, after
setting $a=a_1$ and $b=b_1$,
$$\begin{aligned}
f_1 &= x_1\otimes x_2 + a x_1\otimes y+ b y\otimes x_1,\\
f_2&= x_2\otimes x_1 +a x_1\otimes y+ b y\otimes x_1,\\
f_3&= x_2\otimes x_2 +a x_2\otimes y+ b y\otimes x_2,
\end{aligned}
$$
where $j=i+1$. Using these relations, one can check that
$T$ contains a subalgebra $B:=\Bbbk[x_2][y;\sigma_1,\delta]$
and $T=\sum_{n\geq 0} B x_1^n=\sum_{n\geq 0} x_1^n B$.
Then $T$ is AS regular if and only if $T$ is an iterated Ore
extension of the form
$\Bbbk[x_2][y;\sigma_1,\delta_1][x_1;\sigma_2]$
for some $\sigma_1$, $\sigma_2$ and $\delta_1$ if and
only if $b=-a\neq 0$ (details are omitted).
The assertion follows by setting new $y$ to be $a y$.

(5) Since $R$ is not in the situation of part (4), $R$ contains
an element $g=x_1\otimes x_1+ {\text{other terms}}$.
Then $u^2\cdot g=2(x_2\otimes x_2)\neq 0$. If $p>2$, then
$u^2\cdot g\neq 0$, contradicts Lemma \ref{xxlem3.7}.
Therefore $p=2$.

For the rest of the proof, $p=2$. We have an injective
$U$-morphism
$$R':=(R+W)/W\to (V\otimes V)/W \cong M(2,i)\otimes M(2,i),$$
so we can consider $R'$ as a submodule of $M(2,i)\otimes M(2,i)$.
By Lemma \ref{xxlem3.6}, $R'$ contains elements
$z_3:=-x_1\otimes x_2+x_2\otimes x_1$
and $z_4=x_2\otimes x_2$. Since $R'$ has dimension three,
it must has basis elements $x_1\otimes x_1$, $z_3$ and $z_4$.
Lifting these elements from $R'$ to $R$, we obtain three
linearly independent elements in $R$ (using the fact $p=2$):
$$\begin{aligned}
f_1&= x_1\otimes x_1 + a_1 x_1\otimes y +b_1 y\otimes x_1 +c_1
x_2\otimes y+ d_1 y\otimes x_2+ e_1 y\otimes y,\\
f_2&=x_1\otimes x_2+x_2\otimes x_1
+ a_2 x_1\otimes y +b_2 y\otimes x_1 +c_2
x_2\otimes y+ d_2 y\otimes x_2+ e_2 y\otimes y,\\
f_3&=x_2\otimes x_2+ a_3 x_1\otimes y +b_3 y\otimes x_1 +c_3
x_2\otimes y+ d_3 y\otimes x_2+ e_3 y\otimes y.
\end{aligned}
$$
Under this setting, $R\cap W=0$ implies that
\begin{equation}
\label{E3.9.2}\tag{E3.9.2}
u\cdot f_1=f_2,\quad u\cdot f_2=0, \quad u\cdot f_3=0.
\end{equation}
Further we have,
\begin{equation}
\label{E3.9.3}\tag{E3.9.3}
(2i-w)\cdot f_1=(2i+1-w)\cdot f_2=(2i+2-w)\cdot f_3=0.
\end{equation}
Using \eqref{E3.9.2}-\eqref{E3.9.3}, one has
$$a_2=b_2=e_2=a_3=b_3=0, a_1=c_2, b_1=d_2,$$
and
$$(i-j)a_1=(i-j)b_1=(i-j+1)c_1=(i-j+1)d_1=(i-j+1)c_3=(i-j+1)d_3=0.$$
If $i=j$, then
$$\begin{aligned}
f_1&= x_1\otimes x_1 + a_1 x_1\otimes y +b_1 y\otimes x_1 + e_1 y\otimes y,\\
f_2&=x_1\otimes x_2+x_2\otimes x_1+a_1
x_2\otimes y+ b_1 y\otimes x_2,\\
f_3&=x_2\otimes x_2+ e_3 y\otimes y.
\end{aligned}
$$
Since $f_1$ and $f_3$ are not of the form
$v\otimes w$ for some $v,w\in V$,
$e_3\neq 0$ (so we can assume that $e_3=1$) and
$e_1\neq a_1b_1$. Replacing $x_1$ by $x_1+b_1 y$
(which will not change the $U$-module structure
of $V$), we may assume that $b_1=0$; and up to a rescaling,
$e_1=1$. Thus we have
$i=j$ and
$$\begin{aligned}
f_1&= x_1\otimes x_1 +a x_1\otimes y + y\otimes y,\\
f_2&=x_1\otimes x_2+x_2\otimes x_1+ a x_2\otimes y,\\
f_3&=x_2\otimes x_2+ y\otimes y.
\end{aligned}
$$
We claim that this is not AS regular. To see this,
consider its Koszul dual $B$, which is generated by $y_1:=x_1^{\ast},
y_2:=x_2^{\ast},y_3:=y^{\ast}$ subject to relations
$$y_1^2+y_2^2+y_3^2=0,\quad  ay_1^2+y_1y_3=0, \quad y_3y_1=0,$$
and
$$y_1y_2+y_2y_1=0, \quad ay_1y_2+y_2y_3=0, \quad y_3 y_2=0.$$
Now it is easy to check that $y_3^2 (\Bbbk y_1+\Bbbk y_2+\Bbbk y_3)=0$ which
implies that $B$ is not of finite dimensional and Frobenius.
By \cite[Proposition 5.10]{Sm} or \cite[Corollary D]{LPWZ},
$T$ is not AS regular, yielding a contradiction.

If $i\neq j$, then
$$\begin{aligned}
f_1&= x_1\otimes x_1 +c_1
x_2\otimes y+ d_1 y\otimes x_2+ e_1 y\otimes y,\\
f_2&=x_1\otimes x_2+x_2\otimes x_1,\\
f_3&=x_2\otimes x_2 +c_3
x_2\otimes y+ d_3 y\otimes x_2+ e_3 y\otimes y.
\end{aligned}
$$
By Lemma \ref{xxlem3.8}(1), we have
$$d_1=c_1, \; d_3=c_3, \; 
c_3^2-e_3\neq 0,\; c_1e_3-c_3e_1\neq 0.$$
If $e_3=0$, then $e_1\neq 0$ and $c_3\neq 0$. Up
to a change of basis, the relations of $T$ are
$$\begin{aligned}
x_1^2+c_1(x_2y+yx_2)+y^2&=0,\\
x_1x_2+x_2x_1&=0,\\
x_2^2+x_2y+yx_2&=0.
\end{aligned}
$$
If $e_3\neq 0$, then, up to a change of basis
(by letting new $y$ be $\alpha x_1+\beta y$ for
some scalars $\alpha,\beta$),
we may assume that $e_3=1$ and $c_3=0$, then the relations of
$T$ are
$$\begin{aligned}
x_1^2+ x_2y+yx_2+e_1 y^2&=0,\\
x_1x_2+x_2x_1&=0,\\
x_2^2+y^2&=0.
\end{aligned}
$$

This finishes the proof.
\end{proof}

\begin{lemma}
\label{xxlem3.10}
Retain the hypothesis in Lemma {\rm{\ref{xxlem3.7}}}. If
$y\in S_j\subseteq V$ is a normal element in $T$, then
one of the following holds.
\begin{enumerate}
\item[(1)]
$(i-j)(2i+1-2j)\neq 0$ in ${\mathbb Z}_p$, and the relations
in $T$ are
$$yx_1+ a x_1 y=0, \quad
yx_2+a x_2 y=0, \quad x_2x_1-x_1x_2=0$$
for $a\neq 0$.
\item[(2)]
$2i+1-2j=0$ {\rm{(}}then $i\neq j${\rm{)}} in
${\mathbb Z}_p$, and the relations in $T$ are
$$yx_1+ a x_1 y=0, \quad
yx_2+a x_2 y=0, \quad
x_2x_1-x_1x_2+ \epsilon y^2=0$$
where $a\neq 0$, $\epsilon=0$ or $1$ and $\epsilon (a^2-1)=0$.
\item[(3)]
$i=j$ {\rm{(}}then $2i+1-2j\neq 0${\rm{)}} in ${\mathbb Z}_p$,
and the relations in $T$ are
$$yx_1+ a x_1 y+ b y^2=0, \quad
yx_2+a x_2 y=0,\quad
x_2x_1-x_1x_2+ \epsilon x_2 y$$
where $a\neq 0$, $\epsilon=0$ or $1$ and  $(a+1)(b-\epsilon)=0$.
\end{enumerate}
\end{lemma}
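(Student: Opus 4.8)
The plan is to exploit the normality of $y$ to pass to a two‑dimensional quotient, then determine the three defining relations of $T$ by combining the $\mathbb{Z}_p$‑grading on the relation space $R$ (induced by the $w$‑action) with the constraints forced by AS regularity, and finally to normalize the free parameters. The step I expect to be the real obstacle is the last one — showing that the arithmetic conditions on $a,b,\epsilon$ are not merely necessary but also sufficient for AS regularity — whereas the module‑theoretic bookkeeping, though lengthy, is routine.

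First I would record that, since $T$ is a domain \cite{Sn} and $y$ is normal of positive degree, $y$ is regular, so $Z:=T/(y)$ is a noetherian Koszul AS regular algebra of global dimension two; because $\Bbbk y\subseteq V$ is a left $U$‑submodule, the $U$‑action descends to $Z$, and since $Z_1\cong M(2,i)$ — on which $u$ acts nontrivially — the induced action is inner‑faithful. So Proposition~\ref{xxpro3.2}(1) forces $Z$ to be commutative, with defining relation $\overline{x_2}\,\overline{x_1}-\overline{x_1}\,\overline{x_2}=0$. Since the kernel of $V\otimes V\to Z_1\otimes Z_1$ is exactly $W:=y\otimes V+V\otimes y$, the relation space of $Z$ is $R/(R\cap W)$, so $R$ contains an element congruent to $x_2\otimes x_1-x_1\otimes x_2$ modulo $W$. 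Normality of $y$ gives $yx_1,yx_2\in T_1y$, hence two independent elements of $R\cap W$; together with $\dim_{\Bbbk}(R\cap W)\le 2$ from Lemma~\ref{xxlem3.9}(1) this yields $\dim_{\Bbbk}(R\cap W)=2$, and row‑reduction against $V\otimes y$ produces a basis $f_1=y\otimes x_1+(\text{tail in }V\otimes y)$, $f_2=y\otimes x_2+(\text{tail in }V\otimes y)$ of $R\cap W$, together with a third relation $f_3=x_2\otimes x_1-x_1\otimes x_2+\zeta$ with $\zeta\in W$.

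Next I would pin the relations down using the $U$‑module structure. By Lemma~\ref{xxlem3.7}, $u^2\cdot R=0$ and $\dim_{\Bbbk}u\cdot R\le 1$; applying $u$ to $f_1$ and using $R\cap(y\otimes V)=R\cap(V\otimes y)=0$ forces $u\cdot f_1=f_2$, hence $f_2=y\otimes x_2+a\,x_2\otimes y$ and $f_1=y\otimes x_1+a\,x_1\otimes y+p\,x_2\otimes y+q\,y\otimes y$. Then $u\cdot R=\Bbbk f_2$, so $u\cdot f_3\in\Bbbk f_2$; subtracting suitable multiples of $f_1,f_2$ from $f_3$ removes the $y\otimes x_k$ and $x_1\otimes y$ terms of $\zeta$, giving $f_3=x_2\otimes x_1-x_1\otimes x_2+q'\,x_2\otimes y+r'\,y\otimes y$ with $u\cdot f_3=0$. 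Finally, $R$ is $\mathbb{Z}_p$‑graded by the $w$‑eigenvalue and $R\cap(V\otimes y)=0$, so any $w$‑homogeneous summand of $f_1$ or $f_3$ lying wholly in $V\otimes y$ must vanish; comparing the weights ($y\otimes x_1$ and $x_1\otimes y$ have weight $i+j$, $x_2\otimes y$ has weight $i+j+1$, $y\otimes y$ has weight $2j$, and $x_2\otimes x_1-x_1\otimes x_2$ has weight $2i+1$) forces $p=0$ always, $q=0$ unless $i=j$, $q'=0$ unless $j=i$, and $r'=0$ unless $2j=2i+1$.

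It remains to split into cases and impose AS regularity. The conditions $i=j$ and $2i+1=2j$ are incompatible (they would give $1=0$ in $\mathbb{Z}_p$), so the three sub‑cases $(i-j)(2i+1-2j)\neq 0$, $2i+1=2j$, and $i=j$ exhaust all $(i,j)$; moreover $a\neq 0$, since $a=0$ makes $f_1=y\otimes(x_1+qy)$ a relation, contradicting that $T$ is a domain. In the first sub‑case all of $p,q,q',r'$ vanish and $T$ is a skew polynomial ring in $x_1,x_2,y$, automatically noetherian Koszul AS regular of global dimension three — this is case (1). In the sub‑case $2i+1=2j$ only $r'$ may survive (case (2), $\epsilon:=r'$); in the sub‑case $i=j$ both $q$ and $q'$ may survive (case (3), $b:=q$, $\epsilon:=q'$). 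For these two, the only overlap ambiguity in the monomial order $y<x_1<x_2$ is $x_2x_1y$; resolving it — reducing $(x_2x_1)y$ and $x_2(x_1y)$ to normal form — yields $\epsilon(a^2-1)\,y^3=0$ in case (2) and $(a+1)(b-\epsilon)\,y^2x_2=0$ in case (3), so $\epsilon(a^2-1)=0$, resp.\ $(a+1)(b-\epsilon)=0$, because $T$ is a domain, and since $\Bbbk$ is algebraically closed, rescaling $y$ (and, in case (2), $x_1,x_2$) normalizes $\epsilon$ to $0$ or $1$. Conversely, under these constraints $T$ is an iterated Ore extension $\Bbbk[y][x_1;\sigma_1][x_2;\sigma_2,\delta_2]$ — the $\sigma_2$‑derivation $\delta_2$ being well defined precisely because the constraint holds — hence it is indeed noetherian Koszul AS regular with Hilbert series $(1-t)^{-3}$ (equivalently, its Koszul dual is Frobenius, cf.\ \cite{Sm,LPWZ} and the method of Lemma~\ref{xxlem3.8}). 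This verification of sufficiency is the one step requiring genuine, if routine, computation.
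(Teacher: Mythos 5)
Your proposal is correct and follows essentially the same route as the paper's proof: pass to the commutative quotient $Z=T/(y)$ via Proposition \ref{xxpro3.2}(1) to get the shape of the three relations, use the $u$-action and the $w$-weight decomposition together with $R\cap(V\otimes y)=0$ to eliminate coefficients, derive $a\neq 0$ from $T$ being a domain, extract the constraints $\epsilon(a^2-1)=0$ and $(a+1)(b-\epsilon)=0$ by resolving the single overlap ambiguity, and confirm AS regularity of the resulting algebras by exhibiting them as iterated Ore extensions. The only (immaterial) difference is that you resolve the overlap $x_2x_1y$ under the order $y<x_1<x_2$ where the paper resolves $y(x_2x_1)=(yx_2)x_1$; both yield the same arithmetic conditions.
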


\begin{proof} First of all, every algebra on the list can be
written as an iterated Ore extensions of the form
$\Bbbk[x_2][y;\sigma_1][x_1,\sigma_1,\delta_2]$.
So these are noetherian Koszul AS regular.

Since $y$ is normal and $\Bbbk y$ is a
left $U$-module, the $U$-action on $T$ induces naturally
a $U$-action on $Z:=T/(y)$ where $Z$ is a noetherian
Koszul AS regular algebra of global dimension two.
Note that the degree 1 piece of $Z$ is $Z_1=M(2,i)$,
whence the $U$-action on $Z$ is inner-faithful. By
Proposition \ref{xxpro3.2}(1), $Z$ is commutative,
so $x_2 x_1-x_1 x_2=0$ in $Z$.

Let $R$ be the relation space of $T$. By the previous
paragraph, one can show that $R$ has a basis elements
of the form
$$\begin{aligned}
r_1 &= y x_1 + a_1 x_1 y +b_1 x_2 y+ c_1 y^2,\\
r_2 &= y x_2 + a_2 x_1 y +b_2 x_2 y+ c_2 y^2,\\
r_3 &= x_2 x_1-x_1 x_2 + a_3 x_1 y + b_3 x_2 y +c_3 y^2.
\end{aligned}
$$
By the $U$-action on the basis elements and the fact $\Delta(u)=
u\otimes 1+1\otimes u$, we have
$$\begin{aligned}
u\cdot r_1 &= y x_2 + a_1 x_2 y,\\
u\cdot r_2 &= a_2 x_2 y,\\
u\cdot r_3 &= a_3 x_2 y.
\end{aligned}
$$
Since $u\cdot R\subseteq R$, we obtain that
$$a_2=a_3=c_2=0, \quad a_1=b_2.$$
Similarly, using the above equations and easy computations, we have
$$\begin{aligned}
(i+j-w)\cdot r_1 &= -b_1 x_2 y+ (i-j) c_1 y^2,\\
(i+j+1-w) \cdot r_2 &= 0,\\
(2i+1 -w) \cdot r_3 &= (i-j) b_3 x_2 y +(2i+1-2j)c_3 y^2.
\end{aligned}
$$
Therefore
$$ b_1=0$$ and
\begin{align}
\label{E3.10.1}\tag{E3.10.1}
(i-j) c_1&=0,\\
\label{E3.10.2}\tag{E3.10.2}
(i-j) b_3&=0,\\
\label{E3.10.3}\tag{E3.10.3}
(2i+1-2j) c_{3}&=0.
\end{align}
Now we have three relations of the form
$$\begin{aligned}
r_1 &= y x_1 + a_1 x_1 y + c_1 y^2,\\
r_2 &= y x_2 +a_1  x_2 y,\\
r_3 &= x_2 x_1-x_1 x_2+ b_3 x_2 y +c_3 y^2.
\end{aligned}
$$
with coefficients satisfying \eqref{E3.10.1}-\eqref{E3.10.3}.

Similar to the process of resolving the overlap ambiguity,
we calculate
$$
\begin{aligned}
y(x_2x_1)&= y[x_1 x_2- b_3 x_2 y -c_3 y^2]\\
&=(y x_1) x_2- b_3(y x_2) y -c_3 y^3\\
&=(-a_1 x_1 y - c_1 y^2) x_2 -b_3 (-a_1 x_2 y)y -c_3 y^3\\
&=a_1^2 x_1 x_2 y- a_1^2 c_1 x_2 y^2 +a_1 b_3 x_2 y^2-c_3 y^3\\
(yx_2) x_1&=-a_1 (x_2 y)x_1=-a_1x_2 (y x_1)\\
&=-a_1 x_2 [-a_1 x_1 y - c_1 y^2]\\
&=a_1^2 (x_2 x_1) y +a_1 c_1 x_2 y^2\\
&=a_1^2 [x_1 x_2- b_3 x_2 y -c_3 y^2] y +a_1 c_1 x_2 y^2\\
&=a_1^2 x_1 x_2y - a_1^2 b_3 x_2 y^2 +a_1 c_1 x_2 y^2-a_1^2 c_3 y^3.
\end{aligned}
$$
Since $\{x_1^ix_2^jy^k \mid i,j,k\geq\}$ is a $\Bbbk$-linear
basis, we have
\begin{align}
\label{E3.10.4}\tag{E3.10.4}
c_3 (a_1^2-1)&=0\\
\label{E3.10.5}\tag{E3.10.5}
-a_1^2 c_1 +a_1 b_3&=
-a_1^2 b_3 +a_1 c_1.
\end{align}
Since $R$ does not contain an element of the form
$v\otimes w$ for some $v,w\in V$, $a_1\neq 0$.
Now the system of equations \eqref{E3.10.1}-\eqref{E3.10.5}
has the following solutions:
\begin{enumerate}
\item[(a)]
$(i-j)(2i+1-2j)\neq 0$ in ${\mathbb Z}_p$,
$r_1=yx_1+ a x_1 y$, $r_2=yx_2+a x_2 y$, $r_3=x_2x_1-x_1x_2$.
This is case (1).
\item[(b)]
$2i+1-2j=0$ (then $i\neq j$) in ${\mathbb Z}_p$,
$r_1=yx_1+ a x_1 y$, $r_2=yx_2+a x_2 y$, $r_3=x_2x_1-x_1x_2+ cy^2$
where $c(a^2-1)=0$. Replacing $y$ by $\sqrt{c}y$, we obtain the case
(2).
\item[(c)]
$i=j$ (then $2i+1-2j\neq 0$) in ${\mathbb Z}_p$,
$r_1=yx_1+ a x_1 y+cy^2$, $r_2=yx_2+a x_2 y$, $r_3=x_2x_1-x_1x_2+ b x_2 y$
where $(a+1)(b-c)=0$.
This is case (3) after rescaling.
\end{enumerate}
This finishes the proof.
\end{proof}

\begin{proof}[Proof of Theorem \ref{xxthm0.5}]
First of all, it is routine to check that all algebras in
Theorem \ref{xxthm0.5} are noetherian connected graded Koszul
AS regular of global dimension three, as we did in the proofs
of Lemmas \ref{xxlem3.5}, \ref{xxlem3.8}, \ref{xxlem3.9} and
\ref{xxlem3.10}.

If $T$ is commutative, we only need to specify the $U$-action
on $V:=T_1$. There are two cases: either $V=M(3,i)$ or
$V=M(2,i)\oplus S_j$. This is part (1).

Next we suppose that $T$ is not commutative.
If $V=M(3,i)$, then, by Lemma \ref{xxlem3.5},
only part (2) can occur.

For the rest of the proof, we assume that $T$ is
not commutative and $V=M(2,i)\oplus S_j$. We use
$\dim_{\Bbbk} R\cap W$ to classify the pairs
$(U,T)$.

If $\dim_{\Bbbk} R\cap W=2$, by Lemma \ref{xxlem3.9}(2),
$y$ is a normal element. By Lemma \ref{xxlem3.10}, we obtain
parts (4,5,6). If $\dim_{\Bbbk} R\cap W=1$, by
Lemma \ref{xxlem3.9}(3) we obtain parts (7,8,9).
If $\dim_{\Bbbk} R\cap W=0$, by Lemma \ref{xxlem3.9}(4,5),
we obtain parts (3) and (10). This finishes the proof.
\end{proof}

\section{Easy observations}
\label{xxsec4}

From the limited information in the global dimension 2
case, we see differences between the semisimple and
non-semisimple actions.

\begin{observation}
\label{xxobs4.1}
The following remarks demonstrate differences between
semisimple and non-semisimple Hopf actions on AS regular
algebras. Suppose we are in the setting of Proposition
{\rm{\ref{xxpro3.2}}}.
\begin{enumerate}
\item[(1)]
Let $p$ be an odd prime and let $i=\frac{p-1}{2}$.
Let $V=M(2,i)$. Then the relation of $T=\Bbbk[V]$
is of the form $r=x_1\otimes x_2-x_2\otimes x_1
\in V\otimes V$. By definition, $w\cdot x_j=(i+j-1)x_j$
for $j=1,2$. Hence
$$\begin{aligned}
w\cdot r &= (w\otimes 1+1\otimes w) (x_1\otimes x_2-x_2\otimes x_1)\\
&=(2i+1) (x_1\otimes x_2-x_2\otimes x_1) \\
&= p (x_1\otimes x_2-x_2\otimes x_1)=pr =0.
\end{aligned}
$$
It is clear that $u\cdot r=0$. Therefore the $U$-action
on $\Bbbk r$ is trivial. If we use the statement in
\cite[Theorem 2.1]{CKWZ1} as our definition of trivial
homological determinant, then the $U$-action on $T$
has trivial homological determinant. By Proposition
{\rm{\ref{xxpro3.2}(2)}}, $T^U$ is AS regular. Therefore
\cite[Theorem 0.6]{CKWZ1} fails without $H$ being semisimple.
\item[(2)]
Suppose the $U$-action on $T$ has trivial homological determinant
as in part {\rm{(1)}}. Since $T$ is a free module over $T^U$,
$\End_{T^U}(T)$ is a matrix algebra over $T^U$.
So $T\# U$ is not isomorphic to $\End_{T^U}(T)$, consequently,
\cite[Theorem 0.3]{CKWZ2} fails without $H$ being semisimple.
\item[(3)]
Suppose the $U$-action on $T$ has trivial homological determinant
as in part {\rm{(1)}}. Note that ${\rm{CMreg}}(T)=0$ and
$${\rm{CMreg}}(T^U)={\rm{CMreg}}(\Bbbk[x_1^p, x_2^p])=
2p-2\neq 0,$$
where the definition of Castelnuovo-Mumford regularity,
denoted ${\rm{CMreg}}$, can be found
in \cite[Definition 2.9]{KWZ}. This example shows that
\cite[Lemma 2.15(2)]{KWZ} fails without $H$ being semisimple.
\item[(4)]
In Proposition {\rm{\ref{xxpro3.2}(2)}}, $T^U$ is AS regular.
We are wondering if there is a version of the Shephard-Todd-Chevalley
Theorem for non-semisimple Hopf actions on AS regular algebras, 
even if $U$ does not have any nontrivial grouplike element. In the 
case of Proposition {\rm{\ref{xxpro3.2}(2a)}}, we have that 
$\dim U=p^2$ and that the product of degrees of generators is 
$p$. Hence the conclusion of both \cite[Proposition 1.8(4)]{KWZ} 
and \cite[Conjecture 0.3]{FKMW1} fails. Since $U$ is not 
semisimple, we are wondering if $U$ is still qualified to be called a 
{\it reflection Hopf algebra}, see \cite[Definition 3.2]{KKZ2}.
\item[(5)]
When $H$ is semisimple {\rm{(}}with mild hypotheses{\rm{)}},
by \cite[Corollary 3.10]{CFM}, the rank of $T$ as a
left $T^H$-module is equal to $\dim_{\Bbbk} H$. In Proposition
{\rm{\ref{xxpro3.2}(2a)}}, the rank of $T$ as a left $T^U$-module
is $p$, while $\dim_{\Bbbk} U=p^2$. Therefore 
\cite[Corollary 3.10]{CFM} fails without $H$ being semisimple.
\item[(6)]
When $H$ is semisimple, acting on an AS regular algebra $T$
inner-faithfully, $T$ is usually a left free $H$-module. In the
commutative case see, for example, \cite[Theorem 6.19(2,3)]{OT}.
In the noncommutative case, this was verified for many examples,
see \cite{FKMW1, FKMW2}. However, in the non-semisimple case,
$T$ is never a free $H$-module as ${_H T_0}\cong {_H\Bbbk}$ cannot 
be projective.
\item[(7)]
Consider the case when $p=2$ and $i=1$ in Proposition
{\rm{\ref{xxpro3.2}(2a)}}. Then it is clear that $T_0=\Bbbk\cong S_0$,
$T_1=\Bbbk x_1+\Bbbk x_2\cong M(2,1)$, and
$T_2=(\Bbbk x_1+\Bbbk x_2)x_2+\Bbbk x_1^2\cong M(2,1)\oplus S_0$.
For $i\geq 3$, 
$T_i\cong \begin{cases} M(2,1)^{m} & i=2m,\\
M(2,1)^{m}\oplus S_0 & i=2m+1.\end{cases}$
Therefore neither $S_1$ nor $M(2,0)$ appears as a direct
summand of $T$. Further $\ann_U(T)=\Bbbk wu\neq 0$.
So the $U$-action on $T$ is not faithful, though it is
inner-faithful. However, in the semisimple case, an
inner-faithful $H$-action on an AS regular algebra
is expected to be faithful.
\end{enumerate}
\end{observation}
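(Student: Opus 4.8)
The plan is to read Observation~\ref{xxobs4.1} as seven essentially independent verifications, all sitting on top of Proposition~\ref{xxpro3.2}, which already forces $T=\Bbbk[V]$ with $V=M(2,i)$ and computes $T^U$. Two running cases do most of the work. In setting (A), $p$ is odd and $i=\tfrac{p-1}{2}$: the proof of Proposition~\ref{xxpro3.2}(1) identifies the defining relation of $T$ as $r=z_3=x_1\otimes x_2-x_2\otimes x_1$, so $R=\Bbbk r$ is the whole relation space, and since $u,w$ are primitive and act by the Leibniz rule, Convention~\ref{xxcon1.4} ($w\cdot x_j=(i+j-1)x_j$, $u\cdot x_1=x_2$, $u\cdot x_2=0$) gives $w\cdot r=(2i+1)r=p\,r=0$ and $u\cdot r=x_2\otimes x_2-x_2\otimes x_2=0$. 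As $U$ carries no nontrivial algebra character, $U$ acts on $R$ through $\epsilon$, so by \cite[Theorem~2.1]{CKWZ1} (taken as the working definition) the $U$-action on $T$ has trivial homological determinant, while Proposition~\ref{xxpro3.2}(2b) gives $T^U=\Bbbk[x_1^p,x_2^p]$. In setting (B), $i=p-1$: Proposition~\ref{xxpro3.2}(2a) gives $T^U=\Bbbk[x_1^p,x_2]$, over which $T$ is free of rank $p$ on the basis $1,x_1,\dots,x_1^{p-1}$, with generator degrees $p$ and $1$.

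Parts (1)--(5) then drop out by contrasting these with the cited semisimple-case theorems. Part (1): in setting (A), $T^U=\Bbbk[x_1^p,x_2^p]$ is AS regular, which together with trivial homological determinant contradicts the conclusion of \cite[Theorem~0.6]{CKWZ1}. Part (2): since $T$ is $T^U$-free, $\End_{T^U}(T)\cong M_{p^2}(T^U)$, over which the natural module $(T^U)^{p^2}\cong T$ is projective; but $T$, restricted from $T\#U$ to $U$, is $\bigoplus_nT_n$ with $T_0\cong S_0$ non-projective, and $T\#U$ is free as a left $U$-module, so $T$ is \emph{not} projective over $T\#U$ --- hence the isomorphism of \cite[Theorem~0.3]{CKWZ2}, which would identify these two modules, cannot hold (equivalently, $\mathrm{gldim}\End_{T^U}(T)=\mathrm{gldim}\,T^U=2<\infty=\mathrm{gldim}(T\#U)$, the latter because the trivial module has infinite projective dimension over $T\#U$ as $U$ is non-semisimple). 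Part (3): the minimal free resolution of $\Bbbk$ over $T^U=\Bbbk[x_1^p,x_2^p]$ is the Koszul complex on two degree-$p$ generators, whose top term sits in homological degree $2$ and internal degree $2p$, so $\mathrm{CMreg}(T^U)=2p-2\neq 0=\mathrm{CMreg}(T)$, against \cite[Lemma~2.15(2)]{KWZ}. Parts (4) and (5) use setting (B): in (4) the degrees of the chosen generators of $T^U$ multiply to $p\neq p^2=\dim_\Bbbk U$, contrary to \cite[Proposition~1.8(4)]{KWZ} and \cite[Conjecture~0.3]{FKMW1}; in (5) the rank of $T$ over $T^U$ is $p\neq p^2=\dim_\Bbbk U$, contrary to \cite[Corollary~3.10]{CFM}.

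Parts (6) and (7) are of a slightly different flavour. For (6): the homogeneous action makes $T=\bigoplus_nT_n$ a $U$-module decomposition with $T_0=\Bbbk\cong S_0$, and $S_0$ is not projective over the non-semisimple algebra $U$ (its projective cover is $M(p,0)$ by Corollary~\ref{xxcor1.6}(3)), so $T$ cannot be a projective, hence not a free, $U$-module. For (7) I would take $p=2$, $i=1$ (which is setting (B) with $p=2$) and decompose each $T_n=\Bbbk[x_1,x_2]_n$ using \eqref{E3.2.1}: $u,w$ act on $T$ as derivations with $u\cdot x_1=x_2$, $u\cdot x_2=0$, $w\cdot x_1=x_1$, $w\cdot x_2=0$, so on the monomial basis $w\cdot(x_1^ax_2^b)=a\,x_1^ax_2^b$ and $u\cdot(x_1^ax_2^b)=a\,x_1^{a-1}x_2^{b+1}$, whence the monomials with $a$ odd map bijectively under $u$ onto a subset of those with $a$ even, each such pair spanning a copy of $M(2,1)$, leaving the single summand $\Bbbk x_1^n\cong S_0$ when $n$ is even and no extra summand when $n$ is odd; this reproduces the decompositions in the statement, in particular neither $M(2,0)$ nor $S_1$ occurs. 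Finally $wu\cdot(x_1^ax_2^b)=a(a+1)\,x_1^{a-1}x_2^{b+1}=0$ (the integer $a(a+1)$ being even), while acting on $1$, $x_1$ shows nothing outside $\Bbbk wu$ is annihilated, so $\ann_U(T)=\Bbbk wu\neq 0$ and the $U$-action is inner-faithful but not faithful.

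The main obstacle is not any single computation but the bookkeeping in the middle step: for each cited semisimple-case theorem one must locate precisely which hypothesis forces semisimplicity, check that our example satisfies all remaining hypotheses, and --- especially for (2) --- base the non-isomorphism $T\#U\not\cong\End_{T^U}(T)$ on a genuine ring- or module-theoretic invariant (projectivity of $T$, or global dimension) rather than on a naive Hilbert-series comparison, whose leading term actually agrees for the two algebras.
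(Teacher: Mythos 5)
Your verifications are correct and follow the same route the paper takes (the paper's justifications are embedded in the statement of Observation~\ref{xxobs4.1} itself), so I will only flag the places where you add to, or deviate from, the text. In part (2) you supply a genuine invariant where the paper merely asserts the non-isomorphism: your global-dimension comparison ($\End_{T^U}(T)\cong M_{p^2}(T^U)$ is Morita equivalent to $\Bbbk[x_1^p,x_2^p]$ of global dimension $2$, while $T\#U$ has infinite global dimension because $T\#U$ is free over $U$ and the trivial module restricts to the non-projective $S_0$) is exactly the kind of argument the terse sentence in the paper needs, and your caution about not comparing the two algebras via the natural module or Hilbert series is well placed. In part (7) your monomial-pairing decomposition is correct and in fact \emph{corrects} the displayed formula in the paper: a dimension count (and the paper's own explicit $T_2\cong M(2,1)\oplus S_0$) shows the two cases should read $T_{2m}\cong M(2,1)^{m}\oplus S_0$ and $T_{2m+1}\cong M(2,1)^{m+1}$, which is what your argument produces; the conclusion that neither $S_1$ nor $M(2,0)$ occurs, and that $\ann_U(T)=\Bbbk wu$, is unaffected. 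One small blemish in part (1): the aside ``as $U$ carries no nontrivial algebra character'' is false --- $U$ has $p$ one-dimensional representations $S_0,\dots,S_{p-1}$, hence $p$ distinct characters --- but it is also unnecessary, since your direct computation $u\cdot r=w\cdot r=0=\epsilon(u)r=\epsilon(w)r$ already shows the action on $\Bbbk r$ factors through $\epsilon$. Everything else (the CM-regularity computation via the Koszul resolution on two degree-$p$ generators, the rank and degree counts in setting (B), and the summand argument in (6)) matches the paper.
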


Next we verify the claim made in Remark \ref{xxrem0.7}(3).

\begin{lemma}
\label{xxlem4.2}
Let $H$ be a Hopf algebra containing $K$ as a Hopf subalgebra.
Let $H$ act on an algebra $T$ inner-faithfully.
\begin{enumerate}
\item[(1)]
Suppose every nonzero Hopf ideal of $K$ contains a nonzero
skew primitive element. Then the induced $K$-action on $T$ is
inner-faithful.
\item[(2)]
If $K$ is pointed, then the induced $K$-action on $T$ is
inner-faithful.
\item[(3)]
If $K=U$, then the induced $U$-action on $T$ is
inner-faithful.
\end{enumerate}
\end{lemma}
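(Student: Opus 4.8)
The plan is to prove (1) directly and then deduce (2) and (3) from it. For (1), I would argue by contradiction: suppose the induced $K$-action on $T$ is not inner-faithful, so there is a nonzero Hopf ideal $I\subseteq K$ with $I\cdot T=0$. By the hypothesis on $K$, $I$ contains a nonzero skew primitive element $x$, say with $\Delta(x)=x\otimes g+h\otimes x$ for grouplike elements $g,h$; since $K$ is a Hopf subalgebra of $H$ the structure maps of $H$ restrict to those of $K$, so $g,h$ are grouplike in $H$, $x$ is skew primitive in $H$, $\epsilon(x)=0$, and $S(x)=-h^{-1}xg^{-1}$. The two observations to make are: (i) for any (skew) primitive element $x$ of $H$, the two-sided ideal $J:=HxH$ it generates is automatically a Hopf ideal of $H$ --- the coideal condition $\Delta(J)\subseteq J\otimes H+H\otimes J$ follows by applying the algebra map $\Delta$ and using $\Delta(x)=x\otimes g+h\otimes x\in J\otimes H+H\otimes J$, while $\epsilon(J)=0$ and $S(J)\subseteq H\,S(x)\,H\subseteq J$ are immediate; and (ii) $J$ annihilates $T$, since $T$ is an $H$-module and $x\cdot T\subseteq I\cdot T=0$ give $J\cdot T=HxH\cdot T\subseteq H\cdot\bigl(x\cdot(H\cdot T)\bigr)\subseteq H\cdot(x\cdot T)=0$. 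As $x\ne 0$, this produces a nonzero Hopf ideal $J$ of $H$ with $J\cdot T=0$, contradicting inner-faithfulness of the $H$-action; hence (1) follows.

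For (2), it suffices to verify the hypothesis of (1) for pointed $K$, i.e.\ that every nonzero Hopf ideal $I$ of a pointed Hopf algebra $K$ contains a nonzero skew primitive element. I would use the quotient map $\pi\colon K\to K/I$ (with $K/I$ again pointed) together with the standard fact that a surjection of Hopf algebras out of a pointed Hopf algebra is injective as soon as it is injective on the first term $K_1$ of the coradical filtration; since $\pi$ is not injective, $I\cap K_1\ne 0$. By the Taft--Wilson theorem, $K_1=\Bbbk G(K)+\sum_{g,h\in G(K)}P_{g,h}$, where $G(K)$ is the group of grouplikes of $K$ and $P_{g,h}$ is the space of $(g,h)$-skew primitives; a nonzero element of $I\cap K_1$ then has a nonzero component in either $\Bbbk G(K)$ --- in which case the correspondence between Hopf ideals of a group algebra and normal subgroups forces some nonzero $g-h$ into $I$ --- or in some $P_{g,h}$, and in that case I would use the coideal identity $\Delta(x)-x\otimes 1-1\otimes x\in I\otimes K+K\otimes I$ together with a suitable linear functional on $K$ to project out a nonzero element of $I\cap P_{g,h}$. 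Either way $I$ contains a nonzero skew primitive, and (1) applies.

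For (3), the fastest route is to note that $U$ is connected (its only grouplike element is $1$), hence pointed, so (2) applies; even more directly, as recorded in the proof of Lemma~\ref{xxlem3.1} the algebra $U$ has exactly three Hopf ideals, namely $0$, $(u)$ and $\ker\epsilon$, and each of the two nonzero ones contains the nonzero primitive element $u$, so the hypothesis of (1) is met and (1) gives the claim at once --- this direct route is all that is needed for Corollary~\ref{xxcor0.6}. The only step carrying genuine content is the reduction in (2): that a nonzero Hopf ideal of a pointed Hopf algebra must contain a nonzero skew primitive element. This rests on the coradical-filtration fact quoted above and on the Taft--Wilson description of $K_1$, and the fiddly part will be the final extraction of a skew primitive lying inside $I$ (watching for the edge cases where the relevant grouplikes coincide); everything else, including all of (3), is routine bookkeeping.
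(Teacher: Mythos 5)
Your proposal is correct and follows essentially the same route as the paper: prove (1) by promoting a skew primitive $x\in I$ with $x\cdot T=0$ to the Hopf ideal $HxH$ of $H$, then deduce (2) from the fact that nonzero Hopf ideals of pointed Hopf algebras contain nonzero skew primitives, and (3) as a special case since $U$ is pointed (indeed connected). The only difference is that the paper simply cites \cite[Corollary 5.4.7]{Mo} for the pointed-case fact, whereas you sketch its proof via the coradical filtration and Taft--Wilson; your sketch is sound but unnecessary given the reference.
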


\begin{proof} (1)
If the $K$-action on $T$ is not inner-faithful,
then $x\cdot T=0$ for some nonzero skew primitive
in $K$. It is clear that $x$ is also a skew primitive
element in $H$. Let $I$ be the ideal of $H$ generated by $x$.
Since $x$ is a primitive element, $I$ is a Hopf ideal of $H$.
It is clear that $I\cdot T=0$ as $x\cdot T=0$. Therefore the
$H$-action is not inner-faithful. The assertion follows.

(2) By \cite[Corollary 5.4.7]{Mo}, every nonzero Hopf
ideal of the pointed Hopf algebra $K$ contains a nonzero
skew primitive. The assertion follows from part (1).

(3) This is a special case of (2) as $U$ is pointed
(in fact, connected).
\end{proof}

\begin{proof}[Proof of Proposition \ref{xxpro0.4}]
Let $H$ act inner-faithfully on a noetherian Koszul AS regular
algebra $T$ of global dimension two. By Lemma
\ref{xxlem4.2}, the induced $U$-action on $T$ is inner-faithful.
By Proposition \ref{xxpro3.2}(1), $T$ is $\Bbbk[x_1,x_2]$.
\end{proof}

Let $m$ be an integer $\geq 2$. Let $\{p_{ij} \mid 1\leq i< j\leq m\}$
be a set of nonzero scalars. Define $p_{ij}=p_{ji}^{-1}$ if
$i>j$. Recall that the skew polynomial ring is defined to be
$$\Bbbk_{p_{ij}}[x_1,\cdots,x_m]=
\frac{\Bbbk\langle x_1,\cdots,x_m\rangle}{(x_j x_i=p_{ij} x_i x_j, \;
\forall \;i<j)}.$$

\begin{proposition}
\label{xxpro4.3}
Suppose $p_{ij}\neq 1$ for all $i<j$. Let
$T=\Bbbk_{p_{ij}}[x_1,\cdots,x_m]$ and let $H$ be a Hopf algebra
containing $\Bbbk[u]/(u^p)$, with $u$ being primitive, as a Hopf
subalgebra. Then there is no inner-faithful
homogeneous $H$-action on $T$.
\end{proposition}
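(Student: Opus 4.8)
The plan is to push the whole problem down to the one-dimensional sub-Hopf-algebra $K:=\Bbbk[u]/(u^p)\subseteq H$ and then study homogeneous derivations of $T$. Since $K$ is pointed (indeed connected), Lemma~\ref{xxlem4.2}(2) shows that from an inner-faithful homogeneous $H$-action on $T$ one obtains an inner-faithful (and visibly homogeneous, being a restriction) $K$-action on $T$; so it suffices to prove that $T$ admits no inner-faithful homogeneous $K$-action. Suppose for contradiction that one exists. As $u$ is primitive, it acts on $T$ as a derivation $D$, which by homogeneity has degree $0$, hence restricts to a linear endomorphism of $V:=T_1=\bigoplus_k\Bbbk x_k$, again denoted $D$; and since $T$ is generated by $V$, the derivation $D$ of $T$ is completely determined by its restriction to $V$. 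Inner-faithfulness enters only to force $D\neq 0$ on $V$: the augmentation ideal $\ker\varepsilon=uK$ is a nonzero Hopf ideal of $K$, so it does not annihilate $T$, i.e.\ $u\cdot T\neq 0$, and therefore $D$ does not kill the generating space $V$.

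Next I would use that $T$ is a $K$-module algebra, equivalently that the relation space $R\subseteq V\otimes V$ of $T$ is stable under $D$, meaning $(D\otimes 1+1\otimes D)(R)\subseteq R$. Writing $T=\Bbbk\langle V\rangle/(R)$, the PBW basis of the skew polynomial ring gives $\dim_\Bbbk T_2=\binom{m+1}{2}$, so $R$ is precisely the $\Bbbk$-span of the elements $r_{ij}:=x_j\otimes x_i-p_{ij}\,x_i\otimes x_j$ for $i<j$, and $\dim_\Bbbk R=\binom{m}{2}$. The key observation is that every element of $R$ has zero coefficient on each diagonal basis vector $x_a\otimes x_a$. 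Now write $D(x_k)=\sum_l d_{lk}x_l$ and examine, for each pair $i<j$, the coefficient of $x_a\otimes x_a$ in
\[
(D\otimes 1+1\otimes D)(r_{ij})=D(x_j)\otimes x_i+x_j\otimes D(x_i)-p_{ij}\bigl(D(x_i)\otimes x_j+x_i\otimes D(x_j)\bigr).
\]
Only $a=i$ and $a=j$ give nonzero contributions, namely $(1-p_{ij})d_{ij}$ and $(1-p_{ij})d_{ji}$ respectively. Since these elements lie in $R$, both coefficients vanish, and as $p_{ij}\neq 1$ we get $d_{ij}=d_{ji}=0$ for all $i<j$. Thus $D$ is diagonal: $D(x_k)=\lambda_k x_k$ for scalars $\lambda_k\in\Bbbk$.

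Finally, the $K$-action factors through $u^p=0$, so $D^p=0$ as an operator on $T$; evaluating on $V$ yields $\lambda_k^p=0$ in $\Bbbk$, hence $\lambda_k=0$ for every $k$ since $\Bbbk$ is a field of characteristic $p$. Therefore $D=0$ on $V$, contradicting $D\neq 0$. This contradiction shows that $T$ carries no inner-faithful homogeneous $K$-action, and hence, by the reduction of the first paragraph, no inner-faithful homogeneous $H$-action. The only mildly technical point is the equivalence ``$D^p=0$ on $T$ $\iff$ $D^p=0$ on $V$'' (valid because in characteristic $p$ the operator $D^p$ is again a derivation, hence determined by its values on generators); the substantive step — and the only place where the hypothesis $p_{ij}\neq 1$ is used — is the computation of the $x_a\otimes x_a$-coefficients, which is exactly what forces $D$ to be diagonal.
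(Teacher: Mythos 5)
Your proof is correct, and it takes a route that is recognizably different from (though dual to) the one in the paper. Both arguments begin with the same reduction via Lemma~\ref{xxlem4.2}(2) to $K=\Bbbk[u]/(u^p)$ and both end by showing the derivation $D$ given by $u$ is diagonal on degree one and then killing the eigenvalues with $D^p=0$. The difference is where the diagonality is extracted: the paper invokes \cite[Lemma 5.9(d)]{KKZ1} to transport the action to the Koszul dual $B=\Ext^\ast_T(\Bbbk,\Bbbk)$ and reads off $a_{ji}=0$ for $j\neq i$ from $u(y_i^2)=0$, whereas you stay inside $T$ and read off $d_{ij}=d_{ji}=0$ from the fact that the relation space $R=\mathrm{span}\{x_j\otimes x_i-p_{ij}x_i\otimes x_j\}$ is $D\otimes 1+1\otimes D$-stable and contains no diagonal terms $x_a\otimes x_a$. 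Your version is the more elementary of the two: it needs only the standard fact that the quadratic relation space of a graded module algebra is a submodule of $V\otimes V$ (which the paper already uses throughout Section~\ref{xxsec3}), and it avoids the external reference; the paper's version buys a computation in the finite-dimensional Frobenius algebra $B$, where the relations $y_i^2=0$ make the constraint especially transparent, and fits the Koszul-dual techniques used elsewhere in the paper. Two small remarks: the implication $\lambda_k^p=0\Rightarrow\lambda_k=0$ holds in any field and does not need the characteristic; and the closing parenthetical about ``$D^p=0$ on $T$ $\iff$ $D^p=0$ on $V$'' is unnecessary, since you only use the trivial direction $V\subseteq T$.
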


\begin{proof} By Lemma \ref{xxlem4.2}(2), we may assume $H=\Bbbk[u]/(u^p)$
with $u$ being primitive. Suppose to the contrary that there is an
inner-faithful homogeneous $H$-action on $T$. By \cite[Lemma 5.9(d)]{KKZ1}
and cocommutativity of $H$, $H$ acts on the Koszul dual (equal to
the Ext-algebra) of $T$, denoted by
$$B:={\text{Ext}}^\ast_{T}(\Bbbk,\Bbbk)=
\frac{\Bbbk\langle y_1,\cdots,y_m\rangle}{(y_j y_i=-p_{ij}^{-1} y_i y_j, \;
\forall \;i<j, y_i^2, \; \forall \; i)}$$
where $y_i=x_i^{\ast}$ for each $i$.

Since $u$ is primitive,  $u$ acts on $B$ as a derivation.
For every $i$, write $u(y_i)=\sum_{j} a_{ji} y_j$.
Then
$$\begin{aligned}
0&=u (y_i^2) =(\sum_{j} a_{ji} y_j) y_i +
y_i(\sum_{j} a_{ji} y_j)\\
&=\sum_{j} a_{ji}(y_j y_i+y_i y_j)
=\sum_{j\neq i} a_{ji}(y_j y_i+y_i y_j)\\
&=\sum_{j\neq i} a_{ji}(1-p_{ij}^{-1})y_i y_j
\end{aligned}
$$
which implies that $a_{ji}=0$ for all $j\neq i$.
Equivalently, $u(y_i)=a_{ii} y_i$ for each $i$.
Since $u^p=0$, we obtain that $a_{ii}^p=0$, or
equivalently, $a_{ii}=0$. Thus $u \cdot B_1=0$. Recall that
$B_1=(T_1)^{\ast}$. Hence $u\cdot T_1=0$.
Consequently, $u\cdot T=0$ and the $H$-action is
not inner-faithful, yielding a contradiction.
\end{proof}

As an immediately consequence of Lemma \ref{xxlem4.2}(2) and
Proposition \ref{xxpro4.3}, we have the following very simple 
universal non-existence result.

\begin{corollary}
\label{xxcor4.4}
Let $H$ be a nontrivial finite dimensional connected local Hopf algebra.
Then there is no inner-faithful homogeneous $H$-action on 
$T=\Bbbk_{p_{ij}}[x_1,\cdots,x_m]$ where $p_{ij}\neq 1$ for all $i<j$.
\end{corollary}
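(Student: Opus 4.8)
The plan is to deduce this from Proposition~\ref{xxpro4.3} by showing that any nontrivial finite-dimensional connected local Hopf algebra $H$ must contain a Hopf subalgebra isomorphic to $\Bbbk[u]/(u^{p})$ with $u$ primitive. First I would recall two standard facts about such an $H$: since $H$ is connected and $H\neq\Bbbk$, the first term of the coradical filtration is $H_1=\Bbbk 1\oplus P(H)\supsetneq H_0$, so the space $P(H)$ of primitives is nonzero; and since $H$ is finite dimensional and local, its unique maximal ideal is nilpotent, and as $H/\ker\epsilon\cong\Bbbk$ this maximal ideal equals the augmentation ideal $H^{+}=\ker\epsilon$. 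In particular every primitive element lies in $H^{+}$ and is nilpotent. Now fix $0\neq u_0\in P(H)$ and let $k\geq 0$ be maximal with $u:=u_0^{p^{k}}\neq 0$. Because $u_0\otimes 1$ and $1\otimes u_0$ commute in $H\otimes H$ and the characteristic is $p$, one has $\Delta(u_0^{p})=(u_0\otimes 1+1\otimes u_0)^{p}=u_0^{p}\otimes 1+1\otimes u_0^{p}$, so iterating shows $u$ is primitive; moreover $u^{p}=u_0^{p^{k+1}}=0$ while $u\neq 0$.

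The one step that requires an argument --- and the point I would flag as the crux --- is to check that the subalgebra $K:=\Bbbk\langle u\rangle\subseteq H$ is genuinely $\Bbbk[u]/(u^{p})$ and not $\Bbbk[u]/(u^{j})$ for some $j<p$; that is, the nilpotency index $j$ of $u$ must equal $p$. One has $2\leq j\leq p$ (the lower bound since $u\neq 0$, the upper since $u^{p}=0$), and minimality of $j$ makes $u,u^{2},\dots,u^{j-1}$ linearly independent, hence the elements $u^{l}\otimes u^{j-l}$ for $1\leq l\leq j-1$ are linearly independent in $H\otimes H$. Applying $\Delta$ to $u^{j}=0$ and using primitivity of $u$ gives $\sum_{l=1}^{j-1}\binom{j}{l}\,u^{l}\otimes u^{j-l}=0$ (the $l=0,j$ terms drop out), so $\binom{j}{l}\equiv 0$ in $\Bbbk$ for $1\leq l\leq j-1$; in particular $j=\binom{j}{1}\equiv 0$, i.e.\ $p\mid j$, forcing $j=p$. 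Thus $K\cong\Bbbk[u]/(u^{p})$, and $K$ is a sub-Hopf-algebra of $H$: it is closed under $\Delta$ since $u$ is primitive, under the antipode since $S(u)=-u$, and under $\epsilon$.

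Finally, since $H$ contains the Hopf subalgebra $K\cong\Bbbk[u]/(u^{p})$ with $u$ primitive, Proposition~\ref{xxpro4.3} applies directly and gives that there is no inner-faithful homogeneous $H$-action on $T=\Bbbk_{p_{ij}}[x_1,\dots,x_m]$ when $p_{ij}\neq 1$ for all $i<j$. Alternatively, and equivalently, an inner-faithful homogeneous $H$-action would restrict, by Lemma~\ref{xxlem4.2}(2) since $K$ is pointed, to an inner-faithful homogeneous $K$-action on $T$, which Proposition~\ref{xxpro4.3} rules out. Everything outside the coproduct computation of the previous paragraph is routine structure theory of connected Hopf algebras, so I would expect no real difficulty beyond that point.
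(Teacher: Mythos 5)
Your proof is correct and follows the same route as the paper's, which simply notes that every nontrivial finite-dimensional connected local Hopf algebra contains $\Bbbk[u]/(u^p)$ and then invokes Lemma~\ref{xxlem4.2}(2) and Proposition~\ref{xxpro4.3}. The only difference is that you supply a full (and valid) argument for that containment --- nonvanishing of $P(H)$, nilpotency of primitives via locality, passage to $u_0^{p^k}$, and the binomial computation forcing the nilpotency index to be exactly $p$ --- where the paper treats it as a known fact.
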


\begin{proof} Note that every nontrivial finite dimensional connected 
local Hopf algebra contains $\Bbbk[u]/(u^p)$. Therefore the assertion
follows from Lemma \ref{xxlem4.2}(2) and Proposition \ref{xxpro4.3}.
\end{proof}

It is easy to check that $\Bbbk[u]/(u^p)$ acts inner-faithfully
on both the polynomial ring $\Bbbk[x_1,x_2]$ and the Jordan 
plane $\Bbbk\langle x_1,x_2\rangle/(x_1x_2-x_2x_1-x_1^2)$. 

Note that most of $p^3$-dimensional connected Hopf algebras
in the classification \cite{NWW1, NWW2, Wa1, Wa2} contain a
Hopf subalgebra of the form $\Bbbk[u]/(u^p)$. So
Proposition \ref{xxpro4.3} applies to these Hopf algebras.
The next observation is a continuation of Remark \ref{xxrem0.7}(7).

\begin{observation}
\label{xxobs4.5}
Let $H$ be a Hopf algebra containing $U$ as a Hopf subalgebra.
If $H$ acts on a noetherian
Koszul AS regular algebra $T$ of global dimension three, then
it induces naturally a $U$-action on $T$. This induced $U$-action
must be inner-faithful by Lemma {\rm{\ref{xxlem4.2}(3)}}. As a
consequence, $T$ must be one of the algebras listed in Theorem
{\rm{\ref{xxthm0.5}}}. This basically gives a proof of Corollary 
{\rm{\ref{xxcor0.6}}}.
\begin{enumerate}
\item[(1)]
Theorem {\rm{\ref{xxthm0.5}}} is helpful for understanding explicit
$H$-actions on $T$ when $H$ contains $U$. Even if $H$ is of wild
representation type, we can start from the list of $T$ in Theorem
{\rm{\ref{xxthm0.5}}} to work out all possible $H$-actions on $T$.
This strategy is different from the one in the proof of
Theorem {\rm{\ref{xxthm0.5}}}.
\item[(2)]
Suggested by Zhuang's result \cite[Theorem 1.1]{Zh}, every pointed 
Hopf algebra over a field of positive characteristic is expected 
to contain one of the special Hopf algebras such as $U$, $U_0$, or 
Taft algebras and so on {\rm{(}}this list should be short{\rm{)}}. 
By understanding actions on noetherian Koszul AS regular algebras 
of global dimension three under Hopf algebras from this list, we 
should get a pretty good picture of Hopf actions on Koszul AS 
regular algebras of global dimension three for all pointed Hopf
algebra over a field of positive characteristic.
\item[(3)]
The classification of $p^3$-dimensional connected or pointed
Hopf algebras is undergoing in \cite{NWW1, NWW2, Wa1, Wa2}.
It is known from their work that $U$ appears as a Hopf subalgebra
in many of their examples. Therefore Theorem {\rm{\ref{xxthm0.5}}}
is really helpful for understanding the actions on noetherian Koszul
AS regular algebras of global dimension three under the Hopf
algebras listed in \cite{NWW1, NWW2, Wa1, Wa2}.
\end{enumerate}
\end{observation}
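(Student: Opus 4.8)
The plan is to recognize that the substantive content of Observation \ref{xxobs4.5}---its first paragraph, which the authors themselves identify as ``basically'' proving Corollary \ref{xxcor0.6}---is a short deduction from material already in place, while the enumerated items (1)--(3) are strategic and forward-looking remarks (anchored to \cite{Zh} and to the ongoing classification \cite{NWW1, NWW2, Wa1, Wa2}) rather than assertions requiring proof, so I would leave them as commentary. Thus the real task is: given a Hopf algebra $H$ containing $U$ as a Hopf subalgebra and acting inner-faithfully and homogeneously on a noetherian Koszul AS regular algebra $T$ of global dimension three, show $T$ is one of the algebras in the list of Theorem \ref{xxthm0.5}. First I would observe that restricting the $H$-action along the inclusion $U\hookrightarrow H$ makes $T$ a left $U$-module algebra: the module-algebra axioms only involve the comultiplication, counit, and unit, all of which $U$ inherits from $H$. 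If the $H$-action is homogeneous, i.e.\ each degree-$i$ piece $T_i$ is an $H$-submodule, then each $T_i$ is a fortiori a $U$-submodule, so the restricted $U$-action is homogeneous as well; in particular $V:=T_1$ is a $3$-dimensional left $U$-module.

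Next I would feed this into Lemma \ref{xxlem4.2}(3): since $U$ is pointed (indeed connected), every nonzero Hopf ideal of $U$ contains a nonzero skew-primitive element, and so inner-faithfulness of the $H$-action forces the induced $U$-action on $T$ to be inner-faithful. Here I would flag the bookkeeping point that Observation \ref{xxobs4.5} is tacitly assuming---consistent with Corollary \ref{xxcor0.6}---that the $H$-action is inner-faithful; without that hypothesis the conclusion is false (take $H=U$ acting via $\epsilon$). With this in hand, the pair $(U,T)$ satisfies precisely the standing hypotheses of Theorem \ref{xxthm0.5}: $U$ acts inner-faithfully and homogeneously on a noetherian Koszul AS regular $\Bbbk$-algebra $T$ of global dimension three.

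Finally I would simply invoke Theorem \ref{xxthm0.5} to conclude that, up to a change of basis, $T$ is one of the algebras (1)--(10) listed there; this is exactly the assertion of Corollary \ref{xxcor0.6} and hence of the first paragraph of Observation \ref{xxobs4.5}. The only point deserving a word of care---and the closest thing to an obstacle---is that Theorem \ref{xxthm0.5} is stated over an algebraically closed field of characteristic $p$, so the reduction is valid precisely under that standing hypothesis; I would make this explicit rather than leave it implicit. Beyond that, there is genuinely no obstacle: all of the work was already carried out in Theorem \ref{xxthm0.5} (the classification) and Lemma \ref{xxlem4.2} (descent of inner-faithfulness), so the proof of the observation is the two-line restriction-then-apply argument sketched above. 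For completeness I would append a sentence noting that item (1) of the observation records a computational strategy (starting from the finite list of $T$'s in Theorem \ref{xxthm0.5} and solving for $H$-actions case by case, rather than re-running the argument of Section \ref{xxsec3}), and that items (2) and (3) are heuristic expectations about pointed Hopf algebras in positive characteristic and are not claims proved here.
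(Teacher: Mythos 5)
Your proposal is correct and follows exactly the paper's route: restrict the $H$-action to $U$, invoke Lemma \ref{xxlem4.2}(3) (via pointedness of $U$) to get inner-faithfulness of the induced action, and then apply Theorem \ref{xxthm0.5}, treating items (1)--(3) as commentary. Your two flagged caveats --- that the observation tacitly assumes the $H$-action is inner-faithful (as in Corollary \ref{xxcor0.6}) and that Theorem \ref{xxthm0.5} requires $\Bbbk$ algebraically closed --- are accurate and worth making explicit, but they do not change the argument.
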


\section{Comments, projects, and remarks}
\label{xxsec5}

In this final section we randomly collect some general comments, related
projects, remarks and questions related to the projects in the previous
sections.

As noted in \cite[Remark 3.13(2)]{CVZ}, the representations
of the Drinfeld double of a Hopf algebra $H$ are generally much
more complicated than the representations of $H$. So we start
with the following question.

\begin{question}
\label{xxque5.1}
What is the Green ring of the Drinfeld double of $U$?
\end{question}

The work \cite{Ch2, Ch3} is closely related to this
question.

The next few projects concern the Hopf actions
of AS regular algebras of global dimension three or higher.
A test case is the next.

\begin{project}
\label{xxpro5.2}
Classify all $U$-actions on noetherian Koszul AS regular algebras
of global dimension 4.
\end{project}

The tensor decomposition in Theorem \ref{xxthm0.2}
is again the key to this project.

\begin{project}
\label{xxpro5.3}
For all $U$-actions given in Theorem \ref{xxthm0.5},
work out the fixed subrings $T^U$ and understand the connection
between the $U$-actions and the properties of $T^U$.
\end{project}

This would be a very interesting project as little is known
about non-semisimple Hopf actions on AS regular
algebras.

As noted before Taft algebras are similar to $U$ in several
aspects. So the following project seems doable.

\begin{project}
\label{xxpro5.4}
Let $H$ be a Taft algebra $H_n(q)$ of dimension $n^2$ over
a field of arbitrary characteristic \cite[p. 767]{CVZ}. Classify
all inner-faithful $H$-actions on noetherian Koszul AS
regular algebras of global dimension three.
\end{project}

Unrelated to Hopf actions, the results in Section \ref{xxsec2} are
useful for the computation of Frobenius-Perron dimensions of
representations of $U$.

The Frobenius-Perron dimension of an object in a semisimple
finite tensor (or fusion) category was introduced by
Etingof-Nikshych-Ostrik in 2005 \cite{ENO}. Since then it has become
an extremely useful invariant in the study of fusion categories
and representations of semisimple (weak and/or quasi-)Hopf
algebras. Recently, a new definition of Frobenius-Perron
dimension was introduced in \cite{CGW1, CGW2} where the
original definition was extended from an object in a semisimple
finite tensor category to an endofunctor of any $\Bbbk$-linear
category. In particular, it is defined for objects in
non-semisimple $\Bbbk$-linear monoidal categories. This new
Frobenius-Perron dimension has been computed in various cases, see
\cite{CGW1, CGW2, Xu, ZWD, ZZ}.

\begin{remark}
\label{xxrem5.5}
\begin{enumerate}
\item[(1)]
Xu computed Frobenius-Perron dimensions of representations
of Taft algebras in \cite{Xu}, whose method can be used to
give Frobenius-Perron dimension of representations of $U$
for some small prime $p$. For example,
\begin{enumerate}
\item[(a)]
let $p=2$ and $X$ be a finite dimensional $U$-module, then
$\fpdim (X)$ equals to the $\Bbbk$-dimension of $X$;
\item[(b)]
let $p=3$ and $X=\sum_{i=0}^2 \sum_{l=1}^3 M(l,i)^{\oplus a_{li}}$,
then
$$\fpdim (X)=
\frac{1}{2}\left[(\alpha+\gamma)+\sqrt{(\alpha-\gamma)^2+4 \beta^2}\right]
$$
where
$$\begin{aligned}
\alpha&=a_{10}+a_{11}+a_{12}+2(a_{20}+a_{21}+a_{22})+3(a_{30}+a_{31}+a_{32}),\\
\beta &=a_{12}+a_{21}+a_{22}+a_{30}+a_{31}+a_{32},\\
\gamma&=a_{10}+a_{22}+a_{31}.
\end{aligned}
$$
This formula is similar to \cite[Proposition 7.1]{Xu}. Since 
our Convention {\rm{\ref{xxcon1.4}}} is slightly different from 
the one used in \cite{Xu}, the formula does not match up 
with \cite[Proposition 7.1]{Xu} exactly.
\end{enumerate}
\item[(2)]
It would be interesting to work out a formula of $\fpdim(X)$
when $p=5$.
\end{enumerate}
\end{remark}

A general algebra $B$ is usually of wild representation type,
then it is impossible to understand all indecomposable
left $B$-modules. Sometime it is possible to work out all brick
modules which are a special class of indecomposable module. A left
$B$-module is called a {\it brick} if $\Hom_{B}(M,M)=\Bbbk$. Brick
modules are fundamental objects in the study of Frobenius-Perron
dimension of endofunctors \cite{CGW1, ZZ}. Even when $B$ is of wild
representation type, it would be extremely helpful to understand all
brick modules. The next remark is a consequence of Proposition 
\ref{xxpro1.7}.

\begin{remark}
\label{xxrem5.6}
Let $H$ be a 2-step iterated Hopf Ore extension given in
\cite{BZ}. In parts {\rm{(2,3)}},
Let $Z$ be the center of $H$ and let $M$ be a brick left $H$-module.
Let $\fm=\ann_H(M)$.
\begin{enumerate}
\item[(1)]
Suppose $H$ is commutative. Then each brick left $H$-module is 1-dimensional
and there is a one-to-one correspondence between brick
$H$-modules and a closed point in ${\text{Spec}}\; H$.
\item[(2)]
Suppose $H$ is noncommutative and $d_0=0$ as in \cite[Proposition 8.2]{BZ}.
Then
$$M\cong \begin{cases} {\text{the unique $1$-dimensional simple associated to $\fm$}} &
{\text{if $\fm$ is not Azumaya,}}\\
{\text{the unique $p$-dimensional simple associated to $\fm$}} &
{\text{if $\fm$ is Azumaya.}}
\end{cases}
$$
\item[(3)]
Suppose $H$ is noncommutative and $d_0\neq 0$ as in \cite[Proposition 8.2]{BZ}.
Then
$$M\cong \begin{cases} {\text{one of $p^2$ indecomopsable modules associated to $\fm$}} &
{\text{if $\fm$ is not Azumaya,}}\\
{\text{the unique $p$-dimensional simple associated to $\fm$}} &
{\text{if $\fm$ is Azumaya.}}
\end{cases}
$$
\end{enumerate}
\end{remark}

\subsection*{Acknowledgments}
The authors thank Ellen Kirkman for useful conversations on the 
subject and for carefully reading an earlier version of this 
paper, and thank Yongjun Xu for sharing his unpublished notes 
\cite{Xu} and several interesting ideas. 
H.-X. Chen and D.-G. Wang thank J.J. Zhang
and the Department of Mathematics at University of
Washington for its hospitality during their visits.
H.-X. Chen was partially supported by the National Natural
Science Foundation of China (Nos. 11571298 and 11971418).
D.-G. Wang was partially supported by the National Natural
Science Foundation of China  (No. 11871301) and
the NSF of Shandong Province (No. ZR2019MA060).
J.J. Zhang was partially supported by the US National Science
Foundation (Nos. DMS-1700825 and DMS-200101).

\end{document}